\newcommand  \ind[1]  {   {1\hspace{-1.2mm}{\rm I}}_{\{#1\} }    }
\newcommand{\commentout}[1]{}
\newcommand{\R}{\mathbb{R}}
\newcommand {\al} {\alpha}
\newcommand {\eps}  {\varepsilon}
\newcommand {\p}   {\partial}
\newcommand{\beq}{\begin{equation}}
\newcommand{\eeq}{\end{equation}}
\newtheorem{theorem}{Theorem}[section]
\newtheorem{lemma}[theorem]{Lemma}
\newtheorem{remark}[theorem]{Remark}
\newtheorem{proposition}[theorem]{Proposition}
\newtheorem{corollary}[theorem]{Corollary}
\numberwithin{equation}{section}
\title{Noisy integrate-and-fire equation: continuation after blow-up}
\author{Xu'an Dou\thanks{Beijing International Center for Mathematical Research, Peking University, Beijing, 100871, China. Email: dxa@pku.edu.cn}
\and
Beno\^ \i t Perthame\thanks{Sorbonne Universit{\'e}, CNRS, Universit\'{e} de Paris, Inria, Laboratoire Jacques-Louis Lions, F-75005 Paris. 
Email : Benoit.Perthame@sorbonne-universite.fr}
\and 
Delphine Salort\thanks{Sorbonne Universit{\'e}, CNRS, Laboratoire de Biologie Computationnelle et Quantitative, F-75005 Paris. Email : delphine.salort@sorbonne-universite.fr} \thanks{Supported by ANR-19-CE40-0024.}
\and 
Zhennan Zhou\thanks{Institute for Theoretical Sciences, Westlake University, Hangzhou, Zhejiang Province, 310030, China. Email: zhouzhennan@westlake.edu.cn}
\thanks{Supported by the National Key R\&D Program of China, Project Number 2021YFA1001200, and the NSFC, grant Number 12031013, 12171013.}
}
\date{\today}
\begin{document}

\maketitle

\pagestyle{plain}
%\tableofcontents
\pagenumbering{arabic}

\begin{abstract} 
The integrate and fire equation is a classical model for neural assemblies which can exhibit finite time blow-up. A major open problem is to understand how to continue solutions after blow-up.

Here we study an approach based on random discharge models and a change of time which generates a classical global solution to the expense of a strong absorption rate $1/\eps$. We prove that in the limit $\eps\to0^+$, a global solution is recovered where the integrate and fire equation is reformulated with a singular measure. This describes the dynamics after blow-up and also gives information on the blow-up phenomena itself.

The major difficulty is to handle nonlinear terms. To circumvent it, we establish two new estimates, a kind of equi-integrability of the discharge measure and a $L^2$ estimate of the density. The use of the new timescale turns out to be fundamental for those estimates.
\end{abstract} 
\vskip .7cm

\noindent{\makebox[1in]\hrulefill}\newline
2020 \textit{Mathematics Subject Classification.} 35Q84; 35Q92; 35B25; 35B44; 92B20

% 

%35K55 Nonlinear parabolic equations 

%

%35Q84(Fokker-Planck equations);
%35Q92(PDEs in connection with biology, chemistry and other natural sciences
%35B44  	Blow-up in context of PDEs
% 35Dxx		Generalized solutions to partial differential equations
%   92B20(Mathematical biology in general-Neural networks for/in biological studies, artificial life and related topics)

%\newline

\textit{Keywords and phrases.} Blow-up; Neural assemblies; Integrate-and-fire; Fokker-Planck equations.
%

%------------------------
\section{Introduction}
\label{sec:intro}
The nonlinear noisy leaky integrate-and-fire equation (NNLIF in short) has been introduced to represent some homogeneous neural networks, see \cite{BrHa, brunel} and the references therein.  In this model, each neuron is governed by the integrate-and-fire dynamics with noise, and each neuron receives instantaneously the mean activity of the network. It is now a well-established continuous description derived from random finite size networks,  \cite{delarue2015particle,BossyFT,FL2016,jabin2023meanfield}. The NNLIF equation describes the probability $p(t,v)$ to find a neuron with membrane potential $v$, which takes the form of the drift-diffusion equation
\begin{align}
   & \frac{\partial p}{\partial t} +\frac{\partial }{\partial v} \big[\big(-v+ bN(t) \big)p\big]- a \frac{\partial^2 p}{\partial v^2} = N(t) \delta_{V_R}(v), \qquad t \geq 0, \, v \leq V_F, \label{eq:nif}
\\
  & p(t, V_F)=0, \qquad N(t):= -a \frac{\partial p(t,V_F) }{\partial v}. \label{eq:bry}
\end{align}
Here the reset and firing potentials are given numbers satisfying $V_R < V_F$. The parameter $b$ represents the network connectivity and gives rise to the mathematical interest of the equation since it generates a quadratic non-linearity with two difficulties: the non-linearity arises from the boundary flux according to~\eqref{eq:bry} and acts on the drift. When  $b \leq 0$ (inhibitory or non-connected network), Eq.~\eqref{eq:nif} admits global bounded solutions \cite{CGGS}. This is also the case when $b>0$ is small enough and if the initial data is ``well-behaved'', see \cite{carrillo2014qualitative,carrillo2024classical, CiCP-30-820, RouxSalort}, see also \cite{DelarueIRT2015AAP} for a probability viewpoint and \cite{caceres2024,caceres2024sequence} for recent progress on the long time behavior.

However, when $b>0$, solutions may blow up in finite time, \cite{CCP, delarue2015particle}, where the blow-up time may represent the network synchronisation~\cite{Henry:13}, and is connected to  the multiple firing event in computational neuroscience \cite{zhang2014distribution}. This situation is more intriguing and the question of understanding what happens after the blow-up had a growing interest in recent years. Theoretical construction of the solution after blow-up has been achieved, with a probabilistic viewpoint, in \cite{delarue2015particle}. It has generated broader interests, mainly with probability approaches, as blow-ups of similar nature also arise in models beyond neuroscience, including financial models \cite{hambly2019mckean,2019-AAP1403} and the supercooled Stefan problem \cite{delarue2022global} (see also \cite{CGGS,LS_2020} for the connection between NNLIF and Stefan equations). Numerical investigations with the help of a finite neuron network have been carried out in \cite{CiCP-30-820,du2024synchronization}.
%\xd{not sure the word ``sparked'' in ``It has sparked broader interests''}

An analytical approach towards defining the solution after blow-up, based on PDE tools, has been proposed and studied, independently and differently, in~\cite{dou2022dilating} and \cite{taillefumier2022characterization,sadun2022global}. The key idea is to introduce a new timescale, which is related to the firing rate $N(t)$ and dilates the time near the blow-up when $N(t)=+\infty$. Both \cite{dou2022dilating} and \cite{taillefumier2022characterization,sadun2022global} treat the case when the diffusion coefficient depends linearly on the network activity. Indeed, this assumption allows to preserve a uniform lower bound for the diffusion coefficient, even after dilating time. Here we extend this approach when the diffusion coefficient is constant; the main difficulty is that, after time dilation, the equation degenerates near the blow-up time, and so we have to derive a new analysis in order to overcome this new difficulty. We also refer to  \cite{carrillo2024classical} where the new timescale becomes the key for analyzing the long time behavior.

To introduce our approach, we first rewrite \eqref{eq:nif} before blow-up as an equation on the whole line as in \cite{ikeda2022theoretical} % (see also calculations in Section \ref{subsec:inter-cl}) 
\begin{equation} \label{eq:nif-2delta} 
\begin{cases}
        \frac{\partial p}{\partial t} +\frac{\partial }{\partial v} \big[\big(-v+ bN(t) \big)p\big]- a \frac{\partial^2 p}{\partial v^2} = N(t) \delta_{V_R}(v)-N(t) \delta_{V_F}(v), \qquad t \geq 0, \, v \in \R, 
        \\[10pt]
        p(t, v)=0 \quad \text{for } v>V_F, \qquad N(t):=-a \frac{\partial p (t, V_F^-)}{\partial v}.
\end{cases}\end{equation}Following \cite{taillefumier2022characterization,sadun2022global,dou2022dilating}, we introduce a change of time
\begin{equation}
    N(t) dt =d \tau, \qquad n(\tau, v)=p(t,v), \quad \text{and}\quad  Q(\tau)=\frac{1}{N(t)}.\label{def:tau}
\end{equation} The new timescale $\tau$ is called the dilated timescale in \cite{dou2022dilating} as it dilates the time when $N(t)$ approaches infinity.
A simple use of the chain rule shows that the NNLIF equation becomes
\begin{equation}\label{eq:Qnif-cl}
    \frac{\partial n}{\partial \tau} +\frac{\partial }{ \partial v} \big[\big(-v Q(\tau)+ b \big)n \big]- a Q(\tau) \frac{\partial^2 n}{\partial v^2} = \delta_{V_R}(v)-\delta_{V_F}(v).
\end{equation}
However, in order to take into the boundary condition after blow-up, our construction gives a triple $(n, Q, {\cal S})$ such that %and based on the time dilation regularisation method,
\begin{align}
 &    \frac{\partial n}{\partial \tau} +\frac{\partial }{ \partial v} \big[\big(-v Q(\tau)+ b \big)n \big]- a Q(\tau) \frac{\partial^2 n}{\partial v^2} = \delta_{V_R}(v)- {\cal S}(\tau,v) , \qquad \tau \geq 0, \, v \in \R. \label{eq:Qnif}
\end{align}
The probability measure ${\cal S}(v)$ replaces the Dirichlet boundary condition at $V_F$ just as $\delta_{V_F}(v)$ does in \eqref{eq:Qnif-cl} before blow-up. To include continuation after blow-up,  since, in the degenerate case when the Dirichlet condition cannot be imposed, it will satisfy % and thus
\begin{align}
\begin{cases}  
{\cal S}(\tau, v) \geq 0, \qquad \int_{-\infty}^{+\infty} {\cal S}(\tau, v) dv=1,  \qquad {\cal S}(\tau, v) =0 \quad \text{for} \quad  v<V_F, \label{eq:Qbry}
\\[10pt]
 {\tau \mapsto  \int_{V_F}^\infty n(\tau, v)dv \quad \text{ is continuous},}
\\[10pt]
\text{when }  \int_{V_F}^\infty n(\tau, v)dv >0, \quad \text{then} \quad Q(\tau)=0,
 \\[10pt]
\text{when }  \int_{V_F}^\infty n(\tau, v)dv =0, \quad \text{then} \quad\quad {\cal S}(\tau, v) = \delta_{V_F}(v),\\[10pt]
\text{when $Q(\tau)>0$}, \quad \text{then } \int_{V_F}^\infty n(\tau, v)dv=n(\tau, V_F)=0 \text{ and } -a\p_vn(\tau,V_F)Q(\tau) =1.
\end{cases} 
\end{align}

The blow-up times, i.e. $N(t)=\infty$, correspond to  $Q(\tau)=0$ and the change of variable \eqref{def:tau} is singular. Then Eq.~\eqref{eq:Qnif} degenerates, propagating information beyond $V_F$ since it is reduced to
\begin{align}\label{eq:Qnif-bl}
 \frac{\partial n}{\partial \tau} + b \frac{\partial n}{\partial v} = \delta_{V_R}(v)- {\cal S}(\tau,v), \quad \text{in} \quad \{\tau \in (0,\infty)\, \int_{V_F}^\infty n(\tau, v)dv >0\}.
\end{align}
In any interval where $Q(\tau)>0$, one can check that the conclusion  ${\cal S}(\tau, v) = \delta_{V_F}(v)$ means that the solution corresponds, in the variable $t$, to a solution of the NNLIF problem \eqref{eq:nif}--\eqref{eq:bry}.  Indeed, $(Q,{\cal S})$ might be viewed as a Lagrangian multiplier to keep $n(\tau,\cdot)$ as a probability measure, regardless of whether or not a blow-up is occurring.

We use the notation ${\cal M}$ for the space of bounded measures and ${\cal M}_+\subset {\cal M}$ for the non-negative ones. Our purpose is to prove the 
%------------------
\begin{theorem} \label{th:main}
Let the  initial data $n^0$ satisfy
\begin{align} \label{as:ID}
n^0\geq 0, \quad \int_\R n^0=1, \quad \int_\R v^2n^0 < \infty,\quad \int_\R (n^0)^2 <\infty.
\end{align}
Then, there is a global weak solution of \eqref{eq:Qnif}--\eqref{eq:Qbry}  with $n \in L^\infty\big((0, \infty); L^1(\R) \big)$, and for all $\tau_0>0$, in each interval $(0,\tau_0)$, $Q(\cdot)\in {\cal M}_+(0, \tau_0)$, $n\in L^\infty\big((0,\tau_0); L^2(\R) \big)$ and $\int_\R v^2n(\tau,v) \leq C(\tau_0)$. 
Furthermore, for all $\psi\in L^2+C_b (\R)$,  $\int_\R \psi(v) n(\tau,v)dv \in C(0,\infty)$ and the complementary relation holds
\begin{equation}\label{eq:complementary-relation}
Q(\tau)\int_{V_F}^{+\infty}n(\tau,v)dv=0, \quad \forall \tau >0.
\end{equation}
\end{theorem}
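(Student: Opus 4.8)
The plan is to obtain $(n,Q,{\cal S})$ as a limit, along a subsequence $\eps\to0^+$, of solutions of regularized problems in which the deterministic firing at $V_F$ is replaced by a random discharge of rate $1/\eps$; the whole difficulty lies in passing to the limit in the quadratic nonlinearity, and the dilated timescale $\tau$ is precisely what makes the needed uniform estimates available. \emph{Step 1 (regularized problem).} For $\eps>0$ let $p_\eps\geq0$ solve on $\R$ the Fokker--Planck equation obtained from \eqref{eq:nif-2delta} by replacing the firing mechanism with the bounded reaction $-\frac1\eps\ind{v>V_F}p_\eps$ and $N_\eps(t)=\frac1\eps\int_{V_F}^\infty p_\eps(t,v)\,dv$; as the reaction is bounded and $a>0$ is constant, this has a global smooth solution with $\int_\R p_\eps=1$. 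The time change \eqref{def:tau}, $N_\eps\,dt=d\tau$, with $n_\eps(\tau,\cdot)=p_\eps(t,\cdot)$ and $Q_\eps(\tau)=1/N_\eps(t)$, gives a global solution of \eqref{eq:Qnif} with ${\cal S}_\eps(\tau,v)=\frac{Q_\eps(\tau)}{\eps}\ind{v>V_F}n_\eps(\tau,v)$, hence $n_\eps\geq0$, $\int_\R n_\eps=\int_\R{\cal S}_\eps=1$, ${\cal S}_\eps$ supported in $\{v>V_F\}$, and the identity $Q_\eps(\tau)\int_{V_F}^\infty n_\eps(\tau,v)\,dv=\eps$ for all $\tau$, which is the regularized form of \eqref{eq:complementary-relation}.

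\emph{Step 2 (uniform estimates in $\eps$).} For fixed $\tau_0>0$ I would establish, uniformly in $\eps$: (i) $\int_0^{\tau_0}Q_\eps\,d\tau\leq C(\tau_0)$; (ii) $\int_\R v^2 n_\eps(\tau,v)\,dv\leq C(\tau_0)$ on $(0,\tau_0)$; (iii) $\sup_{\tau\leq\tau_0}\int_\R n_\eps^2\,dv+\int_0^{\tau_0}\int_\R aQ_\eps(\partial_v n_\eps)^2\,dv\,d\tau\leq C(\tau_0)$; (iv) a ``kind of equi-integrability'' of the discharge measures ${\cal S}_\eps$, preventing their concentration in $\tau$ and identifying the only possible limits in $v$ (an $L^1$ density, or $\delta_{V_F}$ on $\{\int_{V_F}^\infty n=0\}$). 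For (i)--(ii): testing \eqref{eq:Qnif} against $1$, $v$, $v^2$ and integrating by parts gives $\frac{d}{d\tau}\int v^2 n_\eps+2Q_\eps\int v^2 n_\eps+\int v^2\,d{\cal S}_\eps=V_R^2+2aQ_\eps+2b\int v n_\eps$, which closes by Gr\"onwall once (i) holds; and (i) is just the original time $t_\eps(\tau_0)=\int_0^{\tau_0}Q_\eps\,d\tau$, which I expect to stay bounded because the restoring drift $-v$ keeps a uniformly positive fraction of the mass above $V_F$, i.e. $N_\eps$ is bounded below. The key new estimate is (iii): multiplying \eqref{eq:Qnif} by $n_\eps$ yields, in the dilated variable,
\[
\frac12\frac{d}{d\tau}\|n_\eps\|_{L^2}^2+aQ_\eps\|\partial_v n_\eps\|_{L^2}^2+\frac{Q_\eps}{\eps}\int_{V_F}^\infty n_\eps^2\,dv=\frac{Q_\eps}{2}\|n_\eps\|_{L^2}^2+n_\eps(\tau,V_R),
\]
where the virtue of the timescale $\tau$ is that the growth coefficient in front of $\|n_\eps\|_{L^2}^2$ is $Q_\eps$, which is integrable by (i), so that only the trace $n_\eps(\tau,V_R)$ remains; its time integral $\int_0^{\tau_0}n_\eps(\tau,V_R)\,d\tau$ is controlled directly from the equation, the dilated timescale keeping the diffusive smoothing effective exactly where a classical estimate would lose it (indeed a uniform $L^2$ bound in the original time $t$ is impossible, as it would preclude blow-up). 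Estimate (iv) follows from the explicit form of ${\cal S}_\eps$ together with (iii) and (i).

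\emph{Step 3 (compactness and passage to the limit --- the main obstacle).} By (iii) and (ii), $\{n_\eps\}$ is bounded in $L^\infty\big((0,\tau_0);L^2(\R)\big)$ with uniformly finite second moments; by \eqref{eq:Qnif} and (i), $\{\partial_\tau n_\eps\}$ is bounded in $L^1\big((0,\tau_0);H^{-2}(\R)\big)$. An Aubin--Lions--Simon argument then gives, along a subsequence, $n_\eps\to n$ in $C\big([0,\tau_0];L^2(\R)\text{-weak}\big)$; in particular $\tau\mapsto\int_\R\psi(v)\,n_\eps(\tau,v)\,dv$ converges uniformly on $[0,\tau_0]$ for every $\psi\in L^2+C_b(\R)$ (using the moment bound to handle the tails), and the limit is continuous, with $n\in L^\infty\big((0,\tau_0);L^2(\R)\big)$ and $\int_\R v^2 n(\tau,\cdot)\leq C(\tau_0)$ by lower semicontinuity. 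Simultaneously $Q_\eps\,d\tau\rightharpoonup dQ$ weak-$*$ in ${\cal M}_+(0,\tau_0)$, ${\cal S}_\eps\rightharpoonup{\cal S}$ weak-$*$ (with the $v$-structure of \eqref{eq:Qbry} inherited from (iv)), and $n_\eps\to n$ a.e. Passing to the limit in the weak formulation of \eqref{eq:Qnif}, the linear and source terms are immediate, whereas the nonlinear terms $\int_0^{\tau_0}\int_\R vQ_\eps n_\eps\,\partial_v\varphi$ and $\int_0^{\tau_0}\int_\R aQ_\eps n_\eps\,\partial_{vv}\varphi$ are written as $\int_0^{\tau_0}g_\eps(\tau)\,Q_\eps(\tau)\,d\tau$ with $g_\eps(\tau)=\int_\R(\cdots)n_\eps(\tau,v)\,dv$ converging uniformly in $C([0,\tau_0])$; paired against $Q_\eps\,d\tau\rightharpoonup dQ$ this converges to an integral against the well-defined measure $n(\tau,v)\,dv\,dQ(\tau)$, using the $\tau$-continuity of $n$. \emph{This is the crux of the proof}: the product of the measure $Q_\eps$ in $\tau$ with the quadratic nonlinearities in $n_\eps$ can only be passed to the limit once the velocity moments of $n_\eps$ have been shown equicontinuous in $\tau$, and this, in turn, rests on the uniform estimates (i)--(iii) --- all of which become available only in the dilated timescale.

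\emph{Step 4 (constraints, complementary relation, globalization).} The sign, mass and support properties of ${\cal S}$ in \eqref{eq:Qbry} pass to the limit, as does the continuity of $\tau\mapsto\int_\R\psi(v)\,n(\tau,v)\,dv$ for $\psi\in L^2+C_b(\R)$; taking $\psi$ an $L^2$ approximation of $\ind{v>V_F}$ (legitimate since $n(\tau,\cdot)\in L^2$) gives the continuity of $\tau\mapsto\int_{V_F}^\infty n(\tau,v)\,dv$. For the complementary relation, $Q_\eps(\tau)\int_{V_F}^\infty n_\eps\,dv=\eps$ gives $\int_0^{\tau_0}\big(\int_{V_F}^\infty n_\eps\,dv\big)Q_\eps\,d\tau=\eps\,\tau_0\to0$; since $\int_{V_F}^\infty n_\eps\,dv\to\int_{V_F}^\infty n\,dv$ uniformly in $\tau$ and $Q_\eps\,d\tau\rightharpoonup dQ$, we obtain $\int_0^{\tau_0}\big(\int_{V_F}^\infty n(\tau,v)\,dv\big)dQ(\tau)=0$, so the measure $Q$ is supported in $\{\tau:\int_{V_F}^\infty n(\tau,v)\,dv=0\}$, which is \eqref{eq:complementary-relation}. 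The remaining alternatives in \eqref{eq:Qbry} follow by localizing the limit equation: on the open set $\{\int_{V_F}^\infty n>0\}$ it reduces to the transport equation \eqref{eq:Qnif-bl} (where $Q=0$), while on any interval where $Q$ is non-trivial it is non-degenerate parabolic, forcing ${\cal S}=\delta_{V_F}$, $n(\tau,V_F)=0$ and $-aQ(\tau)\partial_v n(\tau,V_F)=1$, i.e. the classical problem \eqref{eq:nif}--\eqref{eq:bry}. Finally, mass conservation gives $n\in L^\infty\big((0,\infty);L^1(\R)\big)$, and a diagonal extraction over $\tau_0\to\infty$ yields the global solution.
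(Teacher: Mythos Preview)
Your overall architecture matches the paper's: regularize by random discharge, work in the dilated timescale, prove uniform-in-$\eps$ bounds, extract a subsequence and pass to the limit in the nonlinear terms by a weak--strong argument, then read off \eqref{eq:Qbry} and \eqref{eq:complementary-relation}. Steps~1, 3 and~4 are essentially correct and close to what the paper does (the paper proves equi-continuity directly from the weak formulation and a refined bound $\int_\tau^{\tau+\delta}Q_\eps\le C/|\ln\delta|$ rather than via Aubin--Lions--Simon, but your route would work too once the a priori estimates are in place).

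The genuine gap is in Step~2(iii), the $L^2$ estimate. Your energy identity is correct,
\[
\tfrac12\tfrac{d}{d\tau}\|n_\eps\|_{L^2}^2+aQ_\eps\|\partial_v n_\eps\|_{L^2}^2+\tfrac{Q_\eps}{\eps}\int_{V_F}^\infty n_\eps^2=\tfrac{Q_\eps}{2}\|n_\eps\|_{L^2}^2+n_\eps(\tau,V_R),
\]
but the claim that ``$\int_0^{\tau_0}n_\eps(\tau,V_R)\,d\tau$ is controlled directly from the equation, the dilated timescale keeping the diffusive smoothing effective'' is precisely the hard point and does not follow. The only dissipation available is $aQ_\eps\|\partial_v n_\eps\|_{L^2}^2$, and $Q_\eps$ can be as small as $\eps$ (it vanishes in the limit on blow-up intervals), so any Gagliardo--Nirenberg or trace argument that trades $n_\eps(\tau,V_R)$ against the gradient term loses a factor $1/Q_\eps$ and blows up. The paper devotes its entire Section~4 to this issue: it explains through explicit toy problems why neither the diffusion alone nor the transport alone suffices (an $L^\infty$ bound is in fact false), and then proves the $L^2$ bound by a completely different mechanism. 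One returns to the $t$-timescale, splits $p_\eps=p_\eps^{\rm not}+p_\eps^{\rm spike}$, compares $p_\eps^{\rm spike}$ with the solution $\bar p_\eps$ of $\partial_t\bar p_\eps+\partial_v[(-v+bN_\eps)\bar p_\eps]-a\partial_{vv}\bar p_\eps=N_\eps\delta_{V_R}$ with zero data, and estimates $\|\bar p_\eps\|_{L^2}^2$ via an explicit Duhamel/Gaussian representation and a dyadic decomposition in the exponent $(V_1-V_2)^2/(\sigma_1^2+\sigma_2^2)$; the outcome is $\|\bar p_\eps(t)\|_{L^2}^2\le C(T)\int_0^T N_\eps$, which becomes uniform in $\eps$ precisely because $\int_0^T N_\eps=\tau_0$ and $T=\int_0^{\tau_0}Q_\eps\le C(\tau_0)$. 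Your sketch needs this ingredient (or an equivalent one) to close.

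A smaller point: your justification of (i), ``$N_\eps$ is bounded below'', cannot hold pointwise (it can vanish initially), and the paper obtains $\int_0^{\tau_0}Q_\eps\le C(\tau_0)$ by moment arguments instead; the integrated lower bound on $N_\eps$ that you have in mind is what yields the \emph{refined} estimate $\int_\tau^{\tau+\delta}Q_\eps\le C/|\ln\delta|$, used for equi-continuity.
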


{ This theorem not only establishes the global existence of the solution in $\tau$ timescale, but it also has several consequences on the continuation after the blow-up in $t$ timescale (see Theorem \ref{th:lifespan}): the blow-up size and the post-blow-up profile,  as well as the lifespan in $t$ timescale. In particular,  global solution is obtained as soon as $b< V_F-V_R$ while, when $b> V_F-V_R$, the time $t$ lifespan can be finite. }

Theorem \ref{th:main} follows from Theorems \ref{thm:uni-in-eps-bounds} and \ref {thm:limit} below. Our proof strategy differs deeply from the previous works \cite{delarue2015global,delarue2022global,dou2022dilating,taillefumier2022characterization}: we use a random discharge equation set on the full line \cite{CP2014, JGLiuZZ21} which is described in Section~\ref{sec:reg}. This involves a strong absorption term of strength $\eps^{-1}$ and generates an approximate solution $(n_\eps, Q_\eps,{\cal S}_{\eps})$. The question is then to find appropriate estimates to pass to the limit $\eps\to0^+$, for which the use of the new timescale turns out to be essential. Two key estimates, a uniform estimate on $Q_{\eps}$ and a uniform $L^2$ estimate on $n_{\eps}$ are established in Section~\ref{sec:equi} and \ref{sec:L2}. Those estimates allow us to pass to the limit $\eps \to 0^+$ and characterize the limit solution in Section~\ref{sec:limit}. More properties of  the solution  are given in Section~\ref{sec:6}. Finally, conclusions and discussion are given in Section~\ref{sec:7}.

%------------------------------------------
\section{Regularized problem and moment estimates}
\label{sec:reg}
%-----------------------------------------

 The blow-up phenomena is related to the firing mechanism in \eqref{eq:nif}-\eqref{eq:bry}: neurons fire as soon as their voltages reach the fixed threshold $V_F$, as reflected in  the boundary flux definition of $N(t)$~\eqref{eq:bry}. As a regularized problem, we consider a model with a random firing mechanism, called the random discharge (or firing) model, see  \cite{CP2014,JGLiuZZ21}.   In this regularized problem, the deterministic firing is relaxed to a random firing with its firing intensity proportional to $\frac{1}{\eps}$ for $v>V_F$. The firing rate is defined as an integral which is bounded a priori by $\frac{1}{\eps}$, thus ensuring a global solution. 
 
 Our goal is to prove the $\eps\rightarrow0^+$ limit which could/might give a solution beyond blow-ups for the original problem. A key perspective here is to work in the dilated timescale $\tau$ as defined in~\eqref{def:tau}, which allows us to establish various estimates uniformly in $\eps$.

%------------------------------------------
\subsection{Random discharge in dilated timescale} 
%------------------------------------------

For $\eps>0$ given, the random discharge model in the original timescale $t$ reads
\begin{equation} \label{eq:nif-random}
\begin{cases}
    %&\frac{\partial p_{\eps}}{\partial t} +\frac{\partial }{\partial v} \big[\big(-v+ bN_{\eps}(t) \big)p_{\eps}\big]- a \frac{\partial^2 p_{\eps}}{\partial v^2} = N_{\eps}(t) \delta_{V_R}(v)-\phi_{\eps}(v)p_{\eps}(t,v), \qquad t \geq 0, \, v \in\mathbb{R}, 
&\p_{t} p_{\eps} +\p_v [\big(-v+ bN_{\eps}(t) \big)p_{\eps}]- a {\partial_{vv} p_{\eps}} = N_{\eps}(t) \delta_{V_R}(v)-\phi_{\eps}(v)p_{\eps}(t,v), \qquad t \geq 0, \, v \in\mathbb{R}, 
\\[10pt]
   & N_{\eps}(t):=\int_{V_F}^{+\infty}\phi_{\eps}(v)p_{\eps}(t,v),
\end{cases} \end{equation}
with the absorption profile (only chosen for its simplicity) defined as
\begin{equation} \label{eq:phi}
    \phi_{\eps}(v)=\frac{1}{\eps}\phi(v)=\frac{1}{\eps}\mathbb{I}_{v\geq V_F},
\end{equation} 
which gives a priori in \eqref{eq:nif-random}
\begin{equation}\label{bd-N-eps}
     N_{\eps}\leq \frac{1}{\eps}.
\end{equation} 
\begin{remark}\label{rmk:uplwN}
    For $\eps>0$, $N_{\eps}$ is also bounded from below. A positive lower bound for $N_{\eps}$ cannot be established pointwise, as it can vanish initially if the initial data is supported on $(-\infty,V_F]$. However, we have integral-in-time positive lower bounds, see Prop.~\ref{prop:t-lower-N} in Section~\ref{sec:equi}. 
\end{remark}

The bound \eqref{bd-N-eps} ensures that the solution is global in time for every fixed $\eps>0$. Intuitively, when $\eps\rightarrow0^+$, this random discharge model converges to the model with the fixed threshold $V_F$, see e.g.~\cite{JGLiuZZ21} for a justification in the linear case when $b=0$. However, it is difficult to prove uniform-in-$\eps$ bounds when $b>0$, as the limit $N(t)$ can blow up. Here, in contrast to previous literature \cite{CP2014,JGLiuZZ21} in timescale~$t$, we consider the regularized problem in the dilated timescale~$\tau$ defined in~\eqref{def:tau}, 
\beq \begin{cases}
 \p_\tau n_\eps(\tau,v) + \p_v [(-v Q_\eps(\tau) + b) n_\eps] - a Q_\eps(\tau) \p_{vv} n _\eps = \delta_{V_R}(v)-Q_\eps(\tau)  \phi_\eps n_\eps , \quad \tau \geq 0 , \; v\in \R,
 \\[5pt]
 Q_\eps(\tau) = \Big(\int_\R \phi_\eps(v) n_\eps (\tau,v) dv\Big)^{-1},  
 \\[5pt]
 n_{\eps}(\tau=0,v)=n^0(v),\qquad v\in\R,%\int_\R n_\eps (\tau,v) dv =  \int_\R n^0 (v) dv = 1,
 \end{cases}
\label{IFnew}
\eeq 
where our choice of $\phi_{\eps}$ in \eqref{eq:phi} and \eqref{bd-N-eps} give a priori
\begin{equation}\label{lower-bound-Q}
     Q_{\eps}\geq \eps.
\end{equation}

For a fixed $\eps>0$, the change of time \eqref{def:tau} is regular, thanks to the upper bound and the integrate-in-time positive lower bound of $N_{\eps}(t)$, see Remark~\ref{rmk:uplwN}. Therefore \eqref{eq:nif-random} and \eqref{IFnew} are effectively equivalent, when $\eps>0$ is fixed. However, it is in the new timescale $\tau$ that we can prove various uniform-in-$\eps$ bounds and establish the global-in-time limit as $\eps\rightarrow0^+$, even across intervals where $Q(\tau)=0$, corresponding to blow-up for $N(t)$.

We define the loss term in the right hand side of \eqref{IFnew} as
\begin{equation}\label{def-Seps}
        {\cal S}_{\eps}(\tau,v):=Q_{\eps}(\tau)\phi_{\eps}(v)n_{\eps}(\tau,v)=\frac{1}{\eps}Q_{\eps}(\tau)n_{\eps}(\tau,v)\mathbb{I}_{v\geq V_F}=\frac{n_{\eps}(\tau,v)\mathbb{I}_{v\geq V_F}}{\int_{V_F}^{+\infty}n_{\eps}(\tau,v)dv},
\end{equation} which we shall show converges to the measure ${\cal S}$ in Theorem~\ref{th:main}. The following properties of ${\cal S}_{\eps}$ are directly derived from its definition
\begin{equation}
\int_{\mathbb{R}}{\cal S}_{\eps}(\tau,v)dv=1, \qquad  \text{supp} \;{\cal S}_{\eps}(\tau,\cdot) \subseteq [V_F,+\infty).
\end{equation}

\subsection{Main results on the regularized problem}

In the timescale $\tau$, we are able to establish the limit $\eps\rightarrow0^+$ and in this way construct a global solution of \eqref{eq:Qnif}, thus proving Theorem~\ref{th:main}. We make this precise in the two following theorems
%-------------------------------------------------
\begin{theorem}[Uniform-in-$\eps$ bounds]\label{thm:uni-in-eps-bounds} 
Assume \eqref{as:ID} on initial data. The solution $(n_\eps,Q_{\eps},S_{\eps})$ of~\eqref{IFnew} satisfies the following bounds with uniform-in-$\eps$ constants, on every finite time interval $(0,\tau_0)$,
\[
 \int_{\R} v^2n_{\eps}(\tau,v)dv\leq C(\tau_0), \qquad \int_{\R}n_{\eps}^{2}(\tau,v)dv\leq C(\tau_0),
\]
\[
\tau \mapsto  \int_{\R} \psi(v) n_{\eps}(\tau,v)dv \quad  \text{is continuous uniformly in $\eps$  for all} \quad \psi \in L^2+C_b,
\]
\[
    \int_0^{\tau_0}\int_{\mathbb{R}}(v-V_F)_+^2S_{\eps}(\tau,v)d\tau dv \leq C(\tau_0),
\]
\[
\int_0^{\tau_0}Q_{\eps}(\tau)d\tau\leq C(\tau_0), \qquad 
\int_{\tau}^{\tau+\delta}Q_{\eps}(s)ds \leq \frac{C(\tau_0)}{|-\ln \delta |},
\]
 which holds for $0<\eps<1$, $0\leq \tau\leq \tau_0$ and $0<\delta\leq\frac{1}{2}$.
\end{theorem}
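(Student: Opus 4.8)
The plan is to derive the five families of bounds in the order they are listed, each building on the previous, with all constants allowed to depend on $\tau_0$ but never on $\eps$.

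\emph{Second moment.} First I would test \eqref{IFnew} against $v^2$. Since the reset $\delta_{V_R}$ and the loss $S_\eps$ each have total mass one, the $v^2$-balance reads
\[
\frac{d}{d\tau}\int_\R v^2 n_\eps\,dv = -2Q_\eps\!\int_\R v^2 n_\eps\,dv + 2b\!\int_\R v\,n_\eps\,dv + 2aQ_\eps + V_R^2 - \int_\R v^2 S_\eps\,dv .
\]
The last term is nonnegative, the confining drift contributes the strongly dissipative $-2Q_\eps\int v^2n_\eps$, and $2aQ_\eps$ is the only $Q_\eps$-producing term; with $m_2:=\int v^2n_\eps$ and $2|v|\le v^2+1$ this gives $\tfrac{d}{d\tau}m_2\le -2Q_\eps(m_2-a)+|b|(m_2+1)+V_R^2$. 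A standard barrier argument (drop the dissipative term where $m_2\ge a$, use $m_2\le a$ otherwise) yields $m_2(\tau)\le C(\tau_0)$ uniformly in $\eps$; the $Q_\eps$-terms never enter the final bound, so this step is self-contained.

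\emph{Bounds on $Q_\eps$.} The change of variables \eqref{def:tau} gives $Q_\eps\,d\tau = dt$, so $\int_0^{\tau_0}Q_\eps\,d\tau$ is exactly the original-time horizon $t(\tau_0)$ needed to accumulate $\tau_0$ units of firing; the claimed bound is thus equivalent to a uniform-in-$\eps$ lower bound for $\int N_\eps\,dt$ over real-time windows, which is Prop.~\ref{prop:t-lower-N}. I would prove that by comparison with the Ornstein--Uhlenbeck semigroup: on a real-time window over which the accumulated firing is small (otherwise there is nothing to prove), the reset source and the $bN_\eps$-boost of the drift are small perturbations, while the absorption $\phi_\eps$ acts only above $V_F$ and so does not reduce the inflow to $V_F$ from the left; the second-moment bound prevents the mass from escaping to $-\infty$, and the Gaussian lower bound for the OU kernel forces a positive, $\eps$-independent amount of mass past $V_F$ within a fixed time, i.e. $N_\eps$ is bounded below in the integral sense. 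Tracking the Gaussian lower bound as a function of the window length $\Delta t$ gives $\int_{t_1}^{t_1+\Delta t}N_\eps\,dt\ge e^{-C(\tau_0)/\Delta t}$; setting $\delta=\int_{t(\tau)}^{t(\tau)+\Delta t}N_\eps$ and rewriting in the $\tau$ variable, this is precisely $\int_\tau^{\tau+\delta}Q_\eps\,ds\le C(\tau_0)/|\ln\delta|$.

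\emph{Equi-integrability of $S_\eps$.} This is the cheap consequence of the two previous items. Testing \eqref{IFnew} against the $C^1$ weight $\chi(v)=(v-V_F)_+^2$ (so $\chi(V_R)=0$ and $\chi''=2\,\mathbb{I}_{v>V_F}$) and integrating on $(0,\tau_0)$,
\[
\int_0^{\tau_0}\!\!\int_\R \chi\,S_\eps = \int_\R \chi\,n^0 - \int_\R \chi\,n_\eps(\tau_0) + \int_0^{\tau_0}\!\!\int_\R \chi'(-vQ_\eps+b)n_\eps + 2a\!\int_0^{\tau_0}\!Q_\eps\!\int_{v>V_F}\! n_\eps .
\]
The first term is finite by \eqref{as:ID}, the second is $\le 0$, the fourth is $\le 2aC(\tau_0)$ by $\int_0^{\tau_0}Q_\eps\le C(\tau_0)$; in the third, $\chi'(-vQ_\eps+b)=2(v-V_F)_+(-vQ_\eps+b)$ splits into $2b(v-V_F)_+$ (controlled by the second-moment bound), a part that is $\le 0$ for $v\ge\max(V_F,0)$, and a part bounded by $CQ_\eps$ on the compact set $\{V_F<v\le 0\}$ — altogether $\int_0^{\tau_0}\int (v-V_F)_+^2 S_\eps\le C(\tau_0)$.

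\emph{$L^2$ estimate and continuity.} Testing \eqref{IFnew} against $n_\eps$: the confining drift produces $+\tfrac12 Q_\eps\|n_\eps\|_2^2$, the diffusion produces $-aQ_\eps\|\partial_v n_\eps\|_2^2$, and the absorption produces $-\tfrac1\eps Q_\eps\|n_\eps\|_{L^2(V_F,\infty)}^2$, which (since $\eps\le 1$) absorbs the part of $\tfrac12 Q_\eps\|n_\eps\|_2^2$ carried by $\{v>V_F\}$, leaving
\[
\frac{d}{d\tau}\|n_\eps\|_2^2 + 2aQ_\eps\|\partial_v n_\eps\|_2^2 \le Q_\eps\|n_\eps\|_2^2 + 2\,n_\eps(\tau,V_R).
\]
Given $\int_0^{\tau_0}Q_\eps\le C(\tau_0)$, Grönwall closes the estimate as soon as $\int_0^{\tau_0}n_\eps(\tau,V_R)\,d\tau\le C(\tau_0)$, and this pointwise reset value is the main obstacle, because $aQ_\eps$ — the only mechanism that could smooth the $\delta_{V_R}$ source — degenerates exactly at the blow-up times. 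I would control it in two regimes: where $Q_\eps$ is bounded below, the one-dimensional Agmon inequality $n_\eps(V_R)\le\sqrt2\,\|n_\eps\|_2^{1/2}\|\partial_v n_\eps\|_2^{1/2}$ and Young's inequality let a small multiple of $\|\partial_v n_\eps\|_2^2$ be absorbed into $aQ_\eps\|\partial_v n_\eps\|_2^2$; where $Q_\eps$ is small, \eqref{IFnew} is a small perturbation of the transport $\partial_\tau n_\eps+b\partial_v n_\eps=\delta_{V_R}-S_\eps$, and a Duhamel representation — keeping the time integral against the source together, so that $\int_0^\tau$ of the translated $\delta_{V_R}$ becomes a bounded travelling box of height $\le 1/|b|$ while the transported $S_\eps$, at distance $\ge V_F-V_R>0$ from $V_R$, contributes only through the then very thin Gaussian — shows $n_\eps(\tau,V_R)$ stays bounded; the transition between regimes is patched with $\int_0^{\tau_0}Q_\eps\le C(\tau_0)$. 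Finally, the uniform-in-$\eps$ continuity of $\tau\mapsto\int\psi\,n_\eps$ follows from the weak form: for $\psi\in C_c^\infty$, moving the derivatives onto $\psi$ bounds the increment over $[\tau,\tau+\delta]$ by $C_\psi\big(\delta+C(\tau_0)/|\ln\delta|\big)$ via the first-moment bound and the log-modulus on $\int Q_\eps$; the extension to $\psi\in L^2+C_b$ is by density, using the uniform $L^2$ bound for the $L^2$-part of $\psi$ and the second-moment bound (tightness) for the $C_b$-part. The two crux points are thus Prop.~\ref{prop:t-lower-N} and this $L^2$ estimate, and both hinge on working in the dilated timescale, where the reset source has the fixed amplitude $1$ rather than the $N$-amplified amplitude of \eqref{eq:nif-random} and where the absorption coefficient $\tfrac1\eps Q_\eps$ is automatically $\ge 1$ by \eqref{lower-bound-Q}.
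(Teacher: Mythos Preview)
Your treatment of the second moment, the two $Q_\eps$ bounds, the $S_\eps$ tightness, and the equi-continuity is essentially correct and parallels Propositions~\ref{prop:second-moment}, \ref{prop:finer-Q}, and \ref{prop:equi-continuity}. The paper orders things slightly differently --- it first obtains $\int_0^{\tau_0}Q_\eps\le C$ from a local moment trick with an auxiliary test function $\psi_A$ and only then proves the log-modulus bound --- but your route (deduce the $L^1$ bound from the log-modulus one by covering $[0,\tau_0]$ with short intervals) also works.

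The gap is the $L^2$ estimate. Your two-regime scheme asserts that, where $Q_\eps$ is small, a Duhamel around the transport part shows ``$n_\eps(\tau,V_R)$ stays bounded''. This pointwise claim is false, and the paper works out exactly this obstruction in Section~\ref{sec:intuition-L2}: for $\partial_t q_3 + bN(t)\partial_v q_3 = a\partial_{vv} q_3 + N(t)\delta_{V_R}$ with $N(t)=2/\sqrt{T-t}$ one gets $q_3(T,V_R)=+\infty$. In the $\tau$ variable this is $Q(\tau)$ vanishing linearly as $\tau\to\tau_0^-$ in $\partial_\tau m + b\partial_v m - aQ(\tau)\partial_{vv} m = \delta_{V_R}$, and a direct Green's function computation gives $m(\tau_0,V_R)=+\infty$. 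So in your low-$Q_\eps$ regime the reset value can diverge, Agmon is unavailable (no lower bound on the diffusion to absorb $\|\partial_v n_\eps\|_2^2$), and the Gr\"onwall loop does not close. Your perturbative Duhamel around pure transport is also not made rigorous: the ``perturbation'' carries $aQ_\eps\partial_{vv}n_\eps$ and $-vQ_\eps\partial_v n_\eps$, derivatives you do not control, and the patching between regimes when $Q_\eps$ oscillates is unspecified.

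The paper's $L^2$ proof (Prop.~\ref{prop:redL2}) takes a different route and, contrary to your last sentence, does \emph{not} stay in the dilated timescale --- it returns to $t$. One decomposes $p_\eps = p_\eps^{\text{not}} + p_\eps^{\text{spike}}$; the first piece carries the initial data but no Dirac source, so the naive $L^2$ energy closes. The second piece is compared pointwise with the supersolution $\bar p_\eps$ solving $\partial_t\bar p_\eps + \partial_v[(-v+bN_\eps)\bar p_\eps] - a\partial_{vv}\bar p_\eps = N_\eps\delta_{V_R}$, $\bar p_\eps(0)=0$. For $\bar p_\eps$ the inhomogeneous-OU Green's function is explicit, and the identity
\[
\|\bar p_\eps(t)\|_2^2 = \int_0^t\!\int_0^t N(s_1)N(s_2)\,\frac{1}{\sqrt{2\pi(\sigma_1^2+\sigma_2^2)}}\,e^{-(V_1-V_2)^2/2(\sigma_1^2+\sigma_2^2)}\,ds_1ds_2
\]
is estimated by slicing into the level sets $A_k=\{k\le (V_1-V_2)^2/2(\sigma_1^2+\sigma_2^2)<k+1\}$ and using that the drift gap $V_1-V_2=\int_{s_1}^{s_2}e^{u-t}(bN-V_R)$ controls $\int N$ relative to $\sqrt{\sigma_1^2+\sigma_2^2}$; summing $\sqrt{k+1}\,e^{-k}$ gives $\|\bar p_\eps(t)\|_2^2 \le C(T)\int_0^T N_\eps$. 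Only at the very end does one go back to $\tau$, where $\int_0^T N_\eps = \tau_0$ and $T=\int_0^{\tau_0}Q_\eps\le C(\tau_0)$ close the bound. This explicit Green's function computation is what replaces your two-regime heuristic.
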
 

In the above bounds, the first line gives the second moment and $L^2$ estimates of $n_{\eps}$ while the second line states a weak continuity in time of $n_{\eps}$, where $\psi$ is allowed to be in the sum space $L^2+C_b$. The third line gives a tightness bound for ${\cal S}_{\eps}$. In the fourth line, two $L^1$ estimates on $Q_{\eps}$ is given: one on~$[0,\tau_0]$ and a refined one on a small interval $[\tau,\tau+\delta]$. 

Based on these estimates, we can derive the
%---------------------
\begin{theorem}[The $\eps\rightarrow0^+$ limit]\label{thm:limit}
Assume \eqref{as:ID} on initial data. As $\eps\rightarrow0^+$, after extracting subsequences, the solution of \eqref{IFnew} $(n_\eps,Q_{\eps},S_{\eps})$ converges to $(n,Q,S)$ in the weak sense following the estimates of Theorem~\ref{thm:uni-in-eps-bounds}.  Furthermore, $(n,Q,S)$ is a global-in-time weak solution of the limit system \eqref{eq:Qnif}, and \eqref{eq:Qbry} holds as stated in Prop.~\ref{Prop:S-Dirac} and Prop.~\ref{prop:limit-Ibl}.
\end{theorem}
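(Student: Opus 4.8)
The strategy is the classical one: extract weak limits from the uniform bounds of Theorem~\ref{thm:uni-in-eps-bounds}, then pass to the limit in the weak formulation of~\eqref{IFnew}, the entire difficulty being concentrated in the two quadratic products $Q_\eps n_\eps$.

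\textbf{Extraction of the limits.} Fix $\tau_0>0$. The bounds $\int_\R n_\eps^2\le C(\tau_0)$ and $\int_\R v^2 n_\eps\le C(\tau_0)$ show that $(n_\eps)$ is bounded in $L^\infty\big((0,\tau_0);L^2(\R)\big)$ with uniformly tight second moment; hence, up to a subsequence, $n_\eps\rightharpoonup n$ weak-$*$ in $L^\infty\big((0,\tau_0);L^2(\R)\big)$, and mass conservation $\int_\R n_\eps(\tau,\cdot)=1$ (the source $\delta_{V_R}$ exactly compensates the loss since $Q_\eps\int_\R\phi_\eps n_\eps=1$) together with tightness give $n\ge0$ and $\int_\R n(\tau,\cdot)=1$. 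The equicontinuity in the second line of Theorem~\ref{thm:uni-in-eps-bounds}, combined with the uniform bound $\big|\int_\R\psi\, n_\eps(\tau,\cdot)\big|\le\|n_\eps(\tau)\|_{L^2}\|\psi_1\|_{L^2}+\|\psi_2\|_{C_b}$ for $\psi=\psi_1+\psi_2\in L^2+C_b$, makes the family $\tau\mapsto\int_\R\psi\,n_\eps(\tau,\cdot)$ precompact in $C([0,\tau_0])$ by Arzelà--Ascoli; a diagonal extraction over a countable dense set of test functions $\psi$ and over $\tau_0\in\N$ then yields $\int_\R\psi\,n_\eps(\tau,\cdot)\to\int_\R\psi\,n(\tau,\cdot)$ \emph{uniformly} on each $[0,\tau_0]$, with the limit continuous in $\tau$; density extends this to all $\psi\in L^2+C_b$. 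For $Q_\eps$: nonnegativity and $\int_0^{\tau_0}Q_\eps\le C(\tau_0)$ give $Q_\eps\rightharpoonup Q$ weak-$*$ in ${\cal M}_+(0,\tau_0)$, and passing $\int_\tau^{\tau+\delta}Q_\eps\le C(\tau_0)/|\ln\delta|$ to the limit shows $Q$ inherits this logarithmic modulus of continuity, in particular $Q$ is non-atomic. For ${\cal S}_\eps$: the normalization $\int_\R{\cal S}_\eps(\tau,\cdot)=1$ and the tightness bound $\int_0^{\tau_0}\!\int_\R(v-V_F)_+^2{\cal S}_\eps\le C(\tau_0)$, together with $\mathrm{supp}\,{\cal S}_\eps(\tau,\cdot)\subseteq[V_F,\infty)$, show that $(\tau,v)\mapsto{\cal S}_\eps$ is bounded and tight in ${\cal M}\big([0,\tau_0]\times\R\big)$; extract a weak-$*$ limit ${\cal S}$, which is supported in $\{v\ge V_F\}$ and, testing against functions of $\tau$ alone (legitimate by tightness), has $\tau$-marginal equal to Lebesgue measure, so ${\cal S}={\cal S}(\tau,\cdot)\,d\tau$ with ${\cal S}(\tau,\cdot)$ a probability measure for a.e.~$\tau$.

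\textbf{Passing to the limit in the weak formulation.} For $\phi\in C_c^\infty\big([0,\infty)\times\R\big)$, the weak form of~\eqref{IFnew} reads
\beq
\begin{aligned}
-\int_0^\infty\!\!\int_\R n_\eps\,\p_\tau\phi-\int_\R n^0\,\phi(0,\cdot)&+\int_0^\infty\! Q_\eps(\tau)\!\int_\R v\,n_\eps\,\p_v\phi\,dv\,d\tau-b\int_0^\infty\!\!\int_\R n_\eps\,\p_v\phi\\
-a\int_0^\infty\! Q_\eps(\tau)\!\int_\R n_\eps\,\p_{vv}\phi\,dv\,d\tau&=\int_0^\infty\!\phi(\tau,V_R)\,d\tau-\int_0^\infty\!\!\int_\R\phi\,d{\cal S}_\eps.
\end{aligned}
\eeq
The linear terms pass to the limit at once: $\p_\tau\phi,\p_v\phi$ belong to $L^1\big((0,\tau_0);L^2\big)$ and pair with the weak-$*$ limit $n$, the initial term does not depend on $\eps$, and $\int\phi\,d{\cal S}_\eps\to\int\phi\,d{\cal S}$ by weak-$*$ convergence. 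For the quadratic terms, set $g_\eps^1(\tau):=\int_\R v\,n_\eps(\tau,\cdot)\,\p_v\phi(\tau,\cdot)\,dv$ and $g_\eps^2(\tau):=\int_\R n_\eps(\tau,\cdot)\,\p_{vv}\phi(\tau,\cdot)\,dv$. Since $v\,\p_v\phi(\tau,\cdot)$ and $\p_{vv}\phi(\tau,\cdot)$ are compactly supported continuous functions, the uniform convergence from the first step gives $g_\eps^j\to g^j$ uniformly on the compact $\tau$-support of $\phi$, with $g^j$ continuous and bounded; since also $Q_\eps\,d\tau\rightharpoonup Q\,d\tau$ weak-$*$, the elementary fact that a uniformly convergent sequence of continuous functions, tested against a weak-$*$ convergent sequence of bounded measures, converges to the expected limit gives $\int_0^\infty g_\eps^j Q_\eps\,d\tau\to\int_0^\infty g^j\,dQ$. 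Hence $(n,Q,{\cal S})$ satisfies the weak formulation of~\eqref{eq:Qnif} with loss term ${\cal S}$; the weak time-continuity $\int_\R\psi\,n(\tau,\cdot)\in C(0,\infty)$ for $\psi\in L^2+C_b$ was already obtained above, and taking $\psi\approx\ind{v\ge V_F}$ (approximated in $L^2$) gives the continuity of $\tau\mapsto\int_{V_F}^\infty n(\tau,v)\,dv$.

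\textbf{Boundary relations, and the main obstacle.} The remaining constraints~\eqref{eq:Qbry} for the limit triple --- the complementary relation $Q(\tau)\int_{V_F}^\infty n(\tau,v)\,dv=0$, the identification ${\cal S}(\tau,\cdot)=\delta_{V_F}$ on the set where $\int_{V_F}^\infty n=0$, and the flux relation $-a\,\p_v n(\tau,V_F)\,Q(\tau)=1$ where $Q(\tau)>0$ --- are established from these limit objects in Prop.~\ref{Prop:S-Dirac} and Prop.~\ref{prop:limit-Ibl}, to which we refer. The only genuinely delicate point in the argument above is the passage to the limit in the products $Q_\eps n_\eps$: $Q_\eps$ converges merely as a measure (and is largest precisely where the a priori control is weakest), while $n_\eps$ converges only weakly in $L^2(\R)$ and no $v$-compactness is available; the resolution is that the uniform time-equicontinuity upgrades the weak-$L^2$ convergence of $n_\eps(\tau,\cdot)$ to uniform-in-$\tau$ convergence of the scalar pairings, and the refined bound $\int_\tau^{\tau+\delta}Q_\eps\le C/|\ln\delta|$ forces the limit $Q$ to be non-atomic, so that no spurious mass is produced at isolated instants. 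Both ingredients are exactly what Theorem~\ref{thm:uni-in-eps-bounds} supplies.
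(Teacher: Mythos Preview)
Your proposal is correct and follows essentially the same approach as the paper: extract subsequential limits from the uniform bounds, upgrade the weak-$L^2$ convergence of $n_\eps(\tau,\cdot)$ to uniform-in-$\tau$ convergence of the scalar pairings via Arzel\`a--Ascoli, and then pass to the limit in the nonlinear products $Q_\eps n_\eps$ by the weak--strong argument (measure $Q_\eps$ against uniformly convergent continuous functions of $\tau$). One cosmetic point: the indicator $\ind{v\ge V_F}$ is already in $C_b+L^2$ (write it as a smooth step plus a compactly supported correction), so no approximation is needed there; and the complementary relation $Q\int_{V_F}^\infty n=0$ is derived in the paper directly from the identity $Q_\eps\int_{V_F}^\infty n_\eps=\eps$ rather than inside Propositions~\ref{Prop:S-Dirac}--\ref{prop:limit-Ibl}, but this is a matter of organization only.
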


\begin{remark}
We stress that the limit solution is global in time, which reflects an advantage working in~$\tau$-timescale. For $\eps>0$, both the $t$-system \eqref{eq:nif-random} and the $\tau$-system \eqref{IFnew} have global solutions. However, this might not be the case in the limit, and we need additional efforts to recover the obtained global solution in $t$ from the global solution in $\tau$, see Section~\ref{sec:back-t}. 

\end{remark}

The proof of Theorem \ref{thm:limit} is given in Section \ref{sec:limit}, where we also specify the sense of the convergence. It relies on the uniform-in-$\eps$ estimates summarized in Theorem \ref{thm:uni-in-eps-bounds}. 

For Theorem \ref{thm:uni-in-eps-bounds}, we divide and prove it in four propositions: The estimates using moments are given in Prop.~\ref{prop:second-moment} below. The more intriguing ones: the refined uniform bound for $Q_\eps$ and the $L^2$ estimate for $n_{\eps}$ are given in Prop.~\ref{prop:finer-Q} in Section~\ref{sec:equi} and Prop.~\ref{prop:redL2} in Section~\ref{sec:L2}, respectively. Then in Prop.~\ref{prop:equi-continuity}, we prove the weak continuity in time for $n_{\eps}$. 

Before starting the proof, we note that for fixed $\eps>0$ it is standard to show the existence of a weak solution of \eqref{IFnew}, satisfying that for each test function $\psi(v)\in C^2_b(\mathbb{R})$,
\begin{equation}\label{weak-test}
    \begin{aligned}
        \frac{d}{d\tau}\int_{\mathbb{R}}\psi(v)n_{\eps}(\tau,v)dv=&\int_{\mathbb{R}}b\p_{v}\psi(v)n_{\eps}(\tau,v)dv +\psi(V_R)\\&+ Q_{\eps}(\tau)\int_{\mathbb{R}}\bigl(a\p_{vv}\psi-v\p_v\psi-\phi_{\eps}(v)\psi(v)\bigr)n_{\eps}(\tau,v)dv.
    \end{aligned}
\end{equation}

\subsection{Second moment and related controls}
%----------------------------------------------

Our first goal is to prove the simplest estimates useful for the sequel, which we recall here

%---------------------------------
\begin{proposition}[Second moment of $n_\eps$ and integrability of $Q_{\eps}$]\label{prop:second-moment}
Assuming \eqref{as:ID}, the solution of \eqref{IFnew} satisfies the following bounds with constants independent of $\eps$, for all $\tau_0>0$ and $0\leq \tau \leq \tau_0$.
\begin{align} 
\label{eq:secondmoment}
&\int_{\mathbb{R}} v^2 n_\eps(\tau, v) dv \leq C(\tau_0),\\
\label{eq:est_Qtau}
    &\int_0^{\tau_0}Q_{\eps}(\tau)d\tau \leq C(\tau_0),\\
    \label{est_m2rbis}
&\int_0^{\tau_0} Q_{\eps}(\tau)\int_{\mathbb{R}}(v-V_F)_+^2n_{\eps}(\tau,v)dv d\tau \leq C(\tau_0) \eps.
\end{align}  
\end{proposition}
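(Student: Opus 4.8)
The plan is to test the weak formulation \eqref{weak-test} against $\psi\equiv1$, $\psi=v$ and $\psi=v^2$, turning \eqref{IFnew} into differential (in)equalities for the mass and the first two moments, and to exploit the algebraic identity $\int_\R\phi_\eps n_\eps = Q_\eps^{-1}$ built into \eqref{IFnew}. Since $v,v^2\notin C^2_b$, each such computation is first run with a truncation $\psi_M$ (equal to $v^k$ on $\{|v|\le M\}$, frozen to a constant on $\{|v|\ge 2M\}$, with $|\psi_M'|\le C|v|$, $|\psi_M''|\le C$) and then $M\to\infty$ by monotone/dominated convergence, using \eqref{as:ID}; this is routine and I will not dwell on it. With $\psi\equiv1$ one gets $\frac{d}{d\tau}\int_\R n_\eps = 1 - Q_\eps\int_\R\phi_\eps n_\eps = 0$, hence $\int_\R n_\eps(\tau,\cdot)\equiv1$; with $\psi=v$, writing $m_1:=\int_\R v\,n_\eps$, one gets $\frac{dm_1}{d\tau}=b+V_R-Q_\eps m_1-Q_\eps\frac1\eps\int_{V_F}^{\infty}v\,n_\eps$, and by Cauchy--Schwarz $|m_1|\le (\int_\R v^2 n_\eps)^{1/2}$, so $m_1$ is controlled once \eqref{eq:secondmoment} is.

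For \eqref{eq:secondmoment}, test with $\psi=v^2$: with $m_2:=\int_\R v^2 n_\eps$,
\[
\frac{dm_2}{d\tau}= 2b\,m_1+V_R^2+Q_\eps\Big(2a-2m_2-\tfrac1\eps\!\int_{V_F}^{\infty}v^2 n_\eps\Big).
\]
Using $\frac1\eps\int_{V_F}^{\infty}v^2 n_\eps\ge0$ and $2b\,m_1\le 2|b|\sqrt{m_2}\le m_2+|b|^2$, this gives $\frac{dm_2}{d\tau}\le m_2+|b|^2+V_R^2+2Q_\eps(a-m_2)$. The point is that the last term is $\le0$ whenever $m_2\ge a$; hence on every interval where $m_2\ge a$ one has $\frac{dm_2}{d\tau}\le m_2+(|b|^2+V_R^2)$, and a comparison of $m_2$ with the solution $g$ of $g'=g+(|b|^2+V_R^2)$, $g(0)=\max(a,m_2(0))$ (so that $g\ge a$ throughout) yields $m_2(\tau)\le g(\tau)\le C(\tau_0)$ on $[0,\tau_0]$ — with no use of any bound on $\int Q_\eps$. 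This proves \eqref{eq:secondmoment}, and then $|m_1|\le C(\tau_0)^{1/2}$.

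The main obstacle is \eqref{eq:est_Qtau}. Changing time back through $Q_\eps\,d\tau=dt$ shows that $\int_0^{\tau_0}Q_\eps\,d\tau$ is exactly the physical time $t_\eps(\tau_0)$ associated with the dilated time $\tau_0$; so \eqref{eq:est_Qtau} is equivalent to a uniform-in-$\eps$ bound $t_\eps(\tau_0)\le C(\tau_0)$, i.e.\ to a uniform lower bound on the accumulated firing $\tau_\eps(t)=\int_0^t N_\eps(s)\,ds\ge\eta(t)$ with $\eta\nearrow\infty$. Equivalently, split $[0,\tau_0]=\{Q_\eps\le K\}\cup\{Q_\eps>K\}$: the first part contributes at most $K\tau_0$; on the second, the diffusion coefficient $aQ_\eps$ in \eqref{IFnew} is large, so the diffusive flux across $V_F$ is large and $I_\eps:=\int_{V_F}^{\infty}n_\eps$ increases, which forces $Q_\eps=\eps/I_\eps$ back down and makes $\{Q_\eps>K\}$ short enough that $\int_{\{Q_\eps>K\}}Q_\eps\,d\tau$ stays bounded. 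Making this quantitative is precisely the uniform lower bound on the firing rate announced in Remark~\ref{rmk:uplwN} (Prop.~\ref{prop:t-lower-N}); its proof uses the tightness coming from \eqref{eq:secondmoment} (mass cannot escape to $-\infty$) together with a sub-solution/parabolic lower bound of $n_\eps$ near $V_F$, and I expect this to be the one genuinely non-elementary point.

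Finally, \eqref{est_m2rbis} follows by keeping the dissipative term in the $m_2$-equation. Using $v^2\ge\tfrac12(v-V_F)^2-V_F^2$ (i.e.\ $\tfrac12(v+V_F)^2\ge0$) on $\{v\ge V_F\}$ together with $\frac1\eps\int_{V_F}^{\infty}n_\eps=Q_\eps^{-1}$,
\[
\tfrac12\,Q_\eps\,\tfrac1\eps\!\int_{V_F}^{\infty}(v-V_F)^2 n_\eps\ \le\ 2b\,m_1+V_R^2+V_F^2+2a\,Q_\eps-\frac{dm_2}{d\tau}.
\]
Integrating on $(0,\tau_0)$, dropping $-2Q_\eps m_2\le0$, and using $|m_1|\le C(\tau_0)^{1/2}$, $m_2(0)=\int_\R v^2 n^0<\infty$, $m_2(\tau_0)\ge0$ and \eqref{eq:est_Qtau}, every term on the right is $\le C(\tau_0)$; since $Q_\eps\frac1\eps\int_{V_F}^{\infty}(v-V_F)^2 n_\eps=\frac1\eps\,Q_\eps\int_\R(v-V_F)_+^2 n_\eps$, multiplying through by $\eps$ gives \eqref{est_m2rbis}.
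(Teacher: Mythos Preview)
Your treatment of the second moment \eqref{eq:secondmoment} is essentially the paper's: test with $\psi=v^2$, observe that the term $2Q_\eps(a-m_2)$ is harmless once $m_2\ge a$, and apply Gronwall. Your derivation of \eqref{est_m2rbis} from the $v^2$-identity via $v^2\ge\tfrac12(v-V_F)^2-V_F^2$ also works, though the paper instead tests directly with $\psi(v)=\tfrac12(v-V_F)_+^2$ and obtains \eqref{est_m2rbis} \emph{before} and \emph{independently of} \eqref{eq:est_Qtau}; in your ordering \eqref{est_m2rbis} inherits the dependence on \eqref{eq:est_Qtau}.

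The substantive divergence is \eqref{eq:est_Qtau}. You regard it as the one non-elementary point and outsource it to Prop.~\ref{prop:t-lower-N}, i.e.\ to the parabolic lower bound on the firing rate. That route is logically sound (Prop.~\ref{prop:t-lower-N} only needs the second moment, which you already control, so there is no circularity), but it is considerable overkill. The paper gives a short, purely elementary argument: from the $v^2$-equation one first gets the two-sided bound
\[
\Bigl|\int_0^{\tau_0}Q_\eps\!\int_\R(v^2-a)\,n_\eps\,dv\,d\tau\Bigr|\le C(\tau_0),
\]
and then compensates the sign problem of $v^2-a$ near the origin by testing with a single auxiliary function $\psi_A$ solving $a\psi_A''-v\psi_A'=a$ on $(-A,A]$ (explicitly $\psi_A'(v)=e^{v^2/2a}\int_{-A}^v e^{-w^2/2a}\,dw$). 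This yields $\int_0^{\tau_0}Q_\eps\int_{-A}^A n_\eps\le C$, and combining with the previous display and $1=\int n_\eps\le A^{-2}\int v^2 n_\eps+\int_{-A}^A n_\eps$ closes the estimate for $\int_0^{\tau_0}Q_\eps$ after choosing $A^2>a$. So \eqref{eq:est_Qtau} is in fact as elementary as the moment computations; Prop.~\ref{prop:t-lower-N} is reserved for the genuinely harder \emph{refined} bound $\int_\tau^{\tau+\delta}Q_\eps\le C/|\ln\delta|$.
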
 
\begin{remark}\label{tightness-Seps}
    We note that \eqref{est_m2rbis} can be reformulated as a tightness bound for ${\cal S}_{\eps}$ defined in \eqref{def-Seps}
    \begin{equation}
        \int_0^{\tau_0}\int_{\mathbb{R}}(v-V_F)_+^2S_{\eps}(\tau,v)d\tau dv \leq C(\tau_0).
    \end{equation}
\end{remark}

The rest of this section is devoted to the proof of Prop. \ref{prop:second-moment}.

\paragraph{Moment control for $v>V_F$.}

Taking $\psi(v)=\frac{1}{2}(v-V_F)_+^2$ (integrate $\phi(v)$ twice), then we have $\psi''(v)=\phi(v)$ and
\begin{align*}
    \frac{d}{d\tau}\int_{\mathbb{R}}&\frac{1}{2}(v-V_F)_+^2n_{\eps}(\tau,v)dv=\int_{\mathbb{R}}b(v-V_F)_+n_{\eps}(\tau,v)dv+0\\&+ Q_{\eps}(\tau)\int_{V_F}^{+\infty}\bigl(a-v(v-V_F)_+)n_{\eps}(\tau,v)dv-Q_{\eps}(\tau)\int_{\mathbb{R}}\frac{1}{2\eps}(v-V_F)_+^2n_{\eps}(\tau,v)dv 
\end{align*}    
Note that by definition of $Q_{\eps}(\tau)$
\begin{equation} \label{Qninfini}
    Q_{\eps}(\tau)\int_{V_F}^{+\infty}an_{\eps}(\tau,v)dv=a\eps,
\end{equation} 
which gives 
\begin{align}
        \frac{d}{d\tau}\int_{\mathbb{R}}\frac{1}{2}(v-V_F)_+^2n_{\eps}(\tau,v)dv+Q_{\eps}(\tau)\int_{\mathbb{R}}\bigl((v-V_F)_+^2&+V_F(v-V_F)_+\bigr)n_{\eps}(\tau,v)dv \notag
        \\
        +Q_{\eps}(\tau)\int_{\mathbb{R}}\frac{1}{2\eps}(v-V_F)_+^2n_{\eps}(\tau,v)dv&=b\int_{\mathbb{R}}(v-V_F)_+n_{\eps}(\tau,v)dv+ a\eps  \notag
        \\ 
        &\leq  b\left(\int_{\mathbb{R}}(v-V_F)_+^2n_{\eps}(\tau,v)dv\right)^{1/2}+a\eps. \label{tmp-1}
\end{align} Using $(v-V_F)^2_++V_F(v-V_F)_+\geq -\frac{V_F^2}{4}$ for $v>V_F$ (regardless of the sign of $V_F$), we get
\begin{equation}
Q_{\eps}(\tau)\int_{\mathbb{R}}\bigl((v-V_F)_+^2+V_F(v-V_F)_+\bigr)n_{\eps}(\tau,v)dv \geq -\frac{V_F^2}{4} Q_{\eps}(\tau) \int_{V_F}^{+\infty} n_{\eps}(\tau,v)dv=-\frac{V_F^2}{4}\eps,
\end{equation} which together with \eqref{tmp-1} gives
\begin{align*}
        \frac{d}{d\tau}\int_{\mathbb{R}}\frac{1}{2}(v-V_F)_+^2n_{\eps}(\tau,v)dv
        &+Q_{\eps}(\tau)\int_{\mathbb{R}}\frac{1}{2\eps}(v-V_F)_+^2n_{\eps}(\tau,v)dv\\ &\leq  b\left(\int_{\mathbb{R}}(v-V_F)_+^2n_{\eps}(\tau,v)dv\right)^{1/2}+\left(a+\frac{V_F^2}{4}\right)\eps \\ &\leq  {b^2}\left(\int_{\mathbb{R}}(v-V_F)_+^2n_{\eps}(\tau,v)dv\right)+1+C\eps,
\end{align*} where in the last line we used $\sqrt{x}\leq x+1$. By the Gronwall lemma, we conclude that
\begin{equation}\label{est_m2r}
    \int_{\mathbb{R}}\frac{1}{2}(v-V_F)_+^2n_{\eps}(\tau_0,v)dv+ \int_0^{\tau_0} Q_{\eps}(\tau)\int_{\mathbb{R}}\frac{1}{2\eps}(v-V_F)_+^2n_{\eps}(\tau,v)dv d\tau \leq C(\tau_0),
\end{equation}
which proves the statement \eqref{est_m2rbis}.

\paragraph{Control of the second moment.} 
Taking $\psi(v)=v^2$ we obtain
\begin{equation}  \label{secondm1}
\frac{d}{d\tau}\int_{\mathbb{R}} v^2 n_\eps(\tau, v) dv = 2b \int_{\mathbb{R}} v n_\eps(\tau, v) dv +V_R^2+ Q_\eps(\tau)\int_{\mathbb{R}}  \big( 2a-2v^2 -\phi_{\eps}(v) v^2 \big) n_\eps(\tau, v) dv.
\end{equation}
From this we deduce 
\[
\frac{d}{d\tau}\int_{\mathbb{R}} v^2 n_\eps(\tau, v) dv \leq b^2+V_R^2 +\int_{\mathbb{R}} v^2 n_\eps(\tau, v) dv+ Q_\eps(\tau) \Big( 2a -2 \int_{\mathbb{R}}v^2  n_\eps(\tau, v) dv \Big),
\]
 which can be rewritten as
\begin{equation}
    \frac{d}{d\tau}\left(\int_{\mathbb{R}} v^2 n_\eps(\tau, v) dv-a\right) \leq C+(1- 2Q_\eps(\tau) )\left(  \int_{\mathbb{R}}v^2  n_\eps(\tau, v) dv -a\right),
\end{equation}
from which we deduce by the Gronwall lemma that
\begin{equation*}
\int_{\mathbb{R}} v^2 n_\eps(\tau, v) dv \leq C(\tau).
\end{equation*}
thus proving the first statement \eqref{eq:secondmoment} of Prop. \ref{prop:second-moment}.

Furthermore, integrating in time \eqref{secondm1} and using \eqref{eq:secondmoment} and \eqref{est_m2rbis}, we also get %thanks to the controls for $v>V_F$ that 
\begin{equation}  \label{secondm2} 
-C(\tau_0)\leq \int_0^{\tau_0}Q_\eps(\tau) \int_{\mathbb{R}}(v^2-a ) n_\eps(\tau, v) dv \leq C(\tau_0).
\end{equation} 
This is not enough to derive the integrability of $Q_{\eps}$ since $v^2-a$ can be negative  when $|v|\leq \sqrt a $. This motivates us to find a local estimate.

\paragraph{Local integrability of $Q_{\eps}(\tau)$.} 

To prove  integrability of $Q_{\eps}(\tau)$ we need to combine  the relation \eqref{secondm2} with a local version. To do so, we choose a test function  $\psi_A(v)\in C^1(\R)$, for $A$ large,  as follows
\[
a\p_{vv}\psi_A-v\p_v\psi_A =a \qquad \text{in } (-A,A]
\]
and $\psi_A(v)= 0$ for $v\leq -A$, and $\psi_A(v)$ is affine for $v\geq A$. The explicit formula for $\psi_A$ is deduced from its derivative that, for $v\in[-a, a]$, we choose as 
\[
\p_v \psi_A(v) =  e^{\frac{v^2}{2a }}\int_{-A}^v e^{-\frac{w^2}{2a }}dw>0 \qquad  \text{thus } \p_v \psi_A(-A)=0.
\]
In the weak formulation \eqref{weak-test}, all the the terms are under control (note that $\psi_A(v)\leq C(1+v_+)$ and $\psi_A',\psi_A''$ are bounded, and we can also choose $A>V_F$) from the previous estimates and we infer that
\[
\int_0^{\tau_0}Q_\eps(\tau)\int_{-A}^A n_\eps(\tau, v) dv \leq C(\tau_0).%=O(1).
\] 

We may now conclude, combining the above estimate and \eqref{secondm2}, that
\begin{align*}
 \int_0^{\tau_0} Q_\eps(\tau)d\tau & = \int_0^{\tau_0} Q_\eps(\tau) \int_{\mathbb{R}} n_\eps(\tau, v) dvd\tau 
 \leq \int_0^{\tau_0} Q_\eps(\tau) \left[ \frac 1{A^2}  \int_{\mathbb{R}}v^2  n_\eps(\tau, v) dv +\int_{-A}^A  n_\eps(\tau, v) dv \right] d\tau \notag
\\
& \leq \frac 1{A^2}\int_0^{\tau_0} Q_\eps(\tau) \left[   \int_{\mathbb{R}}(v^2-a)  n_\eps(\tau, v) dv +a\int_{\R}  n_\eps(\tau, v) dv \right] d\tau +C\notag
 \\
 & \leq \frac a {A^2}  \int_0^{\tau_0} Q_\eps(\tau)d\tau +C
\end{align*}
and, taking $A^2 >a$,  we conclude  \eqref{eq:est_Qtau}
\begin{equation}%\label{est_Qtau}
    \int_0^{\tau_0}Q_{\eps}(\tau)d\tau \leq C(\tau_0).
\end{equation}
This concludes the proof of  Prop. \ref{prop:second-moment}.
\qed

%------------------------------------------------------
\section{Refined uniform bounds for $Q_\eps$} 
\label{sec:equi}

The previous controls in Prop. \ref{prop:second-moment} are not enough to pass to the limit in the nonlinear terms of~\eqref{IFnew}. In particular, we need to improve the simple $L^1$ bound on $Q_{\eps}$ \eqref{eq:est_Qtau}. In the following, we turn to prove a refined uniform bound which shows that the integral of $Q_\eps$ is small on a small interval. This will imply the weak continuity in time of $n_{\eps}$ in Prop.~\ref{prop:equi-continuity}, which is crucial for passing the limit in nonlinear terms.

\subsection{Statement of the refined bound}

\begin{proposition}[{Refined} uniform bounds for $Q_\eps(\tau)$]\label{prop:finer-Q}
Assume \eqref{as:ID} and fix a $\tau_0>0$. For the solution of \eqref{IFnew}, there is a constant $C(\tau_0)$ such that for all  $0\leq \tau<\tau_0$, $0<\eps<1$ and $0<\delta \leq  \frac 1 2$, we have 
\begin{equation}\label{est_finer_Q}
    \int_{\tau}^{\tau+\delta}Q_{\eps}(s)ds \leq \frac{C(\tau_0)}{|-\ln \delta |}.
\end{equation}
%Here the constants $C(\tau_0)$ (large) and $c(\tau_0)$ (small) are independent of $\eps$.
\end{proposition}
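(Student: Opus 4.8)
The plan is to transport the statement to the original timescale $t$ and reduce it to an integral-in-time \emph{lower} bound for $N_\eps$. Let $t=t(\tau)$ be the inverse of $\tau=\int_0^tN_\eps(s)\,ds$; since $t'(\tau)=Q_\eps(\tau)$ and $Q_\eps\in L^1_{\mathrm{loc}}$ by \eqref{eq:est_Qtau}, one has $\int_\tau^{\tau+\delta}Q_\eps(s)\,ds=h$ with $h:=t(\tau+\delta)-t(\tau)$, while, writing $t_0:=t(\tau)$,
\[
\delta=\tau\big(t_0+h\big)-\tau(t_0)=\int_{t_0}^{t_0+h}N_\eps(s)\,ds.
\]
So \eqref{est_finer_Q} follows once we establish the following integral-in-time lower bound (singled out as Prop.~\ref{prop:t-lower-N}): there are $h_0,c_*,C_*>0$ depending only on $\tau_0$ with
\[
\int_{t_0}^{t_0+h}N_\eps(s)\,ds\ \ge\ c_*\,e^{-C_*/h},\qquad 0<h\le h_0,
\]
uniformly in $\eps\in(0,1)$ and in $0\le t_0\le t(\tau_0)$ (the range on which \eqref{eq:secondmoment} is available; note $t(\tau_0)\le C(\tau_0)$ by \eqref{eq:est_Qtau}). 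Indeed, for $\delta$ small this gives $\delta\ge c_*e^{-C_*/h}$, hence $h\le 2C_*/|-\ln\delta|$ as soon as $|-\ln\delta|\ge 2|\ln c_*|$; for the remaining (bounded below) values of $\delta$ one simply uses $h=\int_\tau^{\tau+\delta}Q_\eps\le\int_0^{\tau_0+1}Q_\eps\le C(\tau_0+1)$ together with the lower bound applied on $[t_0,t_0+h_0]$ when $h>h_0$. In both cases $\int_\tau^{\tau+\delta}Q_\eps\le C(\tau_0)/|-\ln\delta|$.

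To prove the lower bound on $\int N_\eps$, fix $\eps>0$ and $t_0$. By \eqref{eq:secondmoment} (read in the variable $t$), $p_\eps(t_0,\cdot)$ carries mass at least $\frac12$ on a fixed interval $[-R,R]$ with $R=R(\tau_0)$. Let $w$ solve the linear Fokker--Planck equation with the \emph{same} drift and diffusion but no source and no absorption,
\[
\partial_t w+\partial_v\big[(-v+bN_\eps(t))w\big]-a\,\partial_{vv}w=0\quad\text{on }(t_0,\infty),\qquad w(t_0,\cdot)=p_\eps(t_0,\cdot),
\]
and set $\beta(t):=\int_{V_F}^\infty w(t,v)\,dv$. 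Duhamel's formula for $p_\eps$, positivity of the source $N_\eps\delta_{V_R}$, and mass conservation of the Fokker--Planck propagator give $\int_{V_F}^\infty p_\eps(t,v)\,dv\ge\beta(t)-\int_{t_0}^tN_\eps(s)\,ds$. Since $\eps N_\eps(t)=\int_{V_F}^\infty p_\eps(t,\cdot)$ by \eqref{eq:phi}, writing $A(t):=\int_{t_0}^tN_\eps$ this reads $\eps A'+A\ge\beta$, and Gronwall yields $A(t)\ge\frac1\eps\int_{t_0}^te^{-(t-s)/\eps}\beta(s)\,ds$.

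It remains to bound $\beta$ from below. Writing $w$ through the explicit Gaussian Fokker--Planck kernel, the drift contribution $b\int_{t_0}^{\cdot}e^{-(\cdot-r)}N_\eps(r)\,dr\ge0$ only displaces mass towards $V_F$, so it may be discarded; restricting the kernel integral to $[-R,R]$ and using the Gaussian tail bound, one gets $\beta(s)\ge c_1e^{-C_1/(s-t_0)}$ for $0<s-t_0\le h_0$, with $c_1,C_1,h_0$ depending only on $a$, $V_F$ and $R(\tau_0)$. Inserting this into the Gronwall estimate and separating the regimes $\eps\ge h/2$ and $\eps<h/2$ — in the second one the prefactor $\eps^{-1}$ exactly compensates the loss $e^{-h/\eps}$ — yields $A(t_0+h)\ge c_*e^{-C_*/h}$ for $0<h\le h_0$, uniformly in $\eps\in(0,1)$, which closes the argument.

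The main obstacle is the strong $\eps^{-1}$ absorption: the naive bound $p_\eps\ge e^{-(t-t_0)/\eps}w$ is useless as $\eps\to0^+$. The remedy is that the Fokker--Planck propagator conserves mass, so in Duhamel's formula the absorbed mass enters only through the harmless correction $-\int_{t_0}^tN_\eps$, and in the Gronwall step the factor $\eps^{-1}$ coming from $N_\eps=\eps^{-1}\int_{V_F}^\infty p_\eps$ cancels the exponential loss $e^{-h/\eps}$. Two ancillary points: the drift $bN_\eps(t)$ is not bounded uniformly in $\eps$, so one must use that it is nonnegative and hence only helps the mass reach $V_F$; and the second-moment bound \eqref{eq:secondmoment} is precisely what prevents the mass from escaping to $-\infty$, which is what makes the Gaussian lower bound for $\beta$ uniform in $\eps$.
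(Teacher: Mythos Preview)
Your proof is correct and follows the same architecture as the paper: transfer to the $t$-timescale, reduce to an integral lower bound $\int_{t_0}^{t_0+h}N_\eps\ge c_*e^{-C_*/h}$ (the paper's Prop.~\ref{prop:t-lower-N}), derive the differential inequality $\eps A'+A\ge\beta$ and close by Gronwall together with the Gaussian lower bound $\beta(s)\ge c_1e^{-C_1/(s-t_0)}$ (the paper's Lemma~\ref{lem:OUbis}). The only notable difference is in how you reach $\eps A'+A\ge\beta$: the paper introduces the intermediate density $p_\eps^{\text{not}}$ (solution with absorption but no reset), compares it to both $p_\eps$ and the pure Fokker--Planck solution $\tilde p_\eps$, and reads off the inequality from mass balance; you instead write Duhamel for $p_\eps$ directly, drop the positive reset term, and bound the absorption contribution after integrating over $[V_F,\infty)$ using mass conservation of the propagator. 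Your route is marginally more direct and avoids the auxiliary $p_\eps^{\text{not}}$, while the paper's comparison-principle formulation makes the later reuse of $p_\eps^{\text{not}}$ (in the $L^2$ estimate of Section~\ref{sec:L2}) more natural; both lead to the identical ODE inequality and the same endgame.
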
 

We stress that the constant $C(\tau_0)$ depends on $\tau_0$ only through the second moment of~$n_{\eps}(\tau_0,v)$.   % $c(\tau_0)$ and 
\\

Recall that $Q_{\eps}(\tau)=1/N_{\eps}(t)$ and $dt=\frac{1}{N_{\eps}(t)}d\tau$. Therefore the right hand side in \eqref{est_finer_Q} corresponds to the time duration in $t$-timescale. This motivates us to return to the $t$-timescale to prove the following counterpart of \eqref{est_finer_Q}

\begin{proposition}\label{prop:t-lower-N}
Assume \eqref{as:ID} and fix a $t_0>0$. For $0<\eps<1$ and a time duration $0<t<1/2$, we have an integrated-in-time lower bound for the firing rate in $t$-timescale in {\eqref{eq:nif-random}},
\begin{equation}\label{est_t-lower-N}
        \int_{t_0}^{t_0+  t}N_{\eps}(s)ds \geq  \exp(-C/ t),
\end{equation} 
where the constant $C>0$ only depends on the second moment of the density $p_{\eps}(t_0,\cdot)$.% at time $t=t_0$.% (t_0,v)$.
\end{proposition}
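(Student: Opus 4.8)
The plan is to pass to the stochastic picture and read $\int_{t_0}^{t_0+t}N_\eps(s)\,ds$ as an expected number of firings. For $\eps>0$ fixed, $N_\eps(\cdot)$ is a given bounded function, so \eqref{eq:nif-random} is the forward Kolmogorov equation of the jump--diffusion $X$ solving $dX_s=(-X_s+bN_\eps(s))\,ds+\sqrt{2a}\,dW_s$, killed with rate $\phi_\eps(X_s)=\tfrac1\eps\mathbf 1_{\{X_s>V_F\}}$ and instantaneously reset to $V_R$ at each kill, with $\mathrm{Law}(X_{t_0})=p_\eps(t_0,\cdot)$ --- a genuine probability measure, mass being conserved by \eqref{eq:nif-random}. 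The number $\mathcal K$ of kills on $[t_0,t_0+t]$ has predictable compensator $\int_{t_0}^{t_0+\cdot}\phi_\eps(X_s)\,ds$, so $\int_{t_0}^{t_0+t}N_\eps(s)\,ds=\mathbb E[\mathcal K]\ge\mathbb P(\mathcal K\ge1)=\mathbb P(\kappa_1\le t_0+t)$ with $\kappa_1$ the first kill time. Before $\kappa_1$ the process coincides with the \emph{kill-free} diffusion $Y$ (same SDE, no kills, same initial law), so the standard time-change of an exponential clock gives
\[
  \int_{t_0}^{t_0+t}N_\eps(s)\,ds\ \ge\ \mathbb E\Big[1-\exp\Big(-\tfrac1\eps\int_{t_0}^{t_0+t}\mathbf 1_{\{Y_s>V_F\}}\,ds\Big)\Big].
\]
It remains to bound this expectation below by $\exp(-C/t)$; notice the killing rate $1/\eps$ now appears only in the harmless shape $1-e^{-\cdot/\eps}$.

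The term $bN_\eps$ is disposed of by a dichotomy. If $\int_{t_0}^{t_0+t}N_\eps\ge\exp(-C/t)$ we are done; otherwise $\int_{t_0}^{t_0+t}|b|\,N_\eps\le|b|\exp(-C/t)$ is negligible, so this term shifts $Y$ pointwise by at most $|b|\exp(-C/t)$ and $Y$ is, up to that negligible shift, the Ornstein--Uhlenbeck diffusion with generator $a\p_{vv}-v\p_v$ --- its drift $-v$ is confining and bounded on any bounded $v$-range. (For $b\ge0$ this step is not needed: $bN_\eps\ge0$ only pushes $Y$ upward, hence only increases $\int\mathbf 1_{\{Y_s>V_F\}}\,ds$, and may be dropped by an elementary pathwise comparison.)

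I would then exploit the second moment. Put $K_0:=\int_\R v^2p_\eps(t_0,v)\,dv$ and $R_0:=\sqrt{2K_0}$; by Chebyshev, $\mathbb P(Y_{t_0}\in[-R_0,R_0])\ge\tfrac12$. Fix a level $\Lambda=V_F+L$ with $L=L(a,V_F)>0$ large enough that, starting from $\Lambda$, an Ornstein--Uhlenbeck path stays above $V_F$ on a time window of length $\le\tfrac14$ with probability at least some $c_2=c_2(a,V_F)>0$. For $w\in[-R_0,R_0]$, on the event that the path from $w$ hits $\Lambda$ before time $t_0+t/2$ and then stays above $V_F$ for the next $\min(t/2,\eps)$ units of time one has $\tfrac1\eps\int_{t_0}^{t_0+t}\mathbf 1_{\{Y_s>V_F\}}\,ds\ge\tfrac1\eps\min(t/2,\eps)\ge t/2$ (using $\eps<1$), hence $1-\exp(-\cdots)\ge1-e^{-t/2}\ge c\,t$ with $c$ absolute (using $t<\tfrac12$). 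The probability of that event is, by the strong Markov property at the hitting time, at least $c_2$ times the probability that the path covers the distance $\lesssim R_0+1$ to $\Lambda$ against an adverse drift $\lesssim R_0+1$ within time $t/2$, and the latter is the classical Gaussian heat-kernel lower bound $\ge\exp(-C_1(1+K_0)/t)$ --- obtained, e.g., by dominating the hitting probability by $\mathbb P(Y_{t/2}\ge\Lambda)$, an explicit Gaussian tail, or by Girsanov plus the reflection principle, or by an explicit truncated-Gaussian subsolution of the Fokker--Planck equation. These bounds are uniform in $w$ and in $\eps$, so combining with the displayed inequality and $\mathbb P(Y_{t_0}\in[-R_0,R_0])\ge\tfrac12$,
\[
  \int_{t_0}^{t_0+t}N_\eps(s)\,ds\ \ge\ \tfrac12\,c_2\,c\,t\,\exp\big(-C_1(1+K_0)/t\big)\ \ge\ \exp(-C/t)
\]
once $C$ is taken large enough in terms of $C_1$, $K_0$ and absolute constants, because $-t\ln(\tfrac12 c_2 c\,t)$ is bounded for $t\in(0,\tfrac12)$. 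Since $C$ depends on $t_0$ only through $K_0=\int v^2p_\eps(t_0,\cdot)$, this is the assertion; a posteriori the alternative $\int_{t_0}^{t_0+t}N_\eps<\exp(-C/t)$ considered above never occurs.

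The main obstacle is the heat-kernel hitting estimate used in the third paragraph, and above all making it \emph{uniform in $\eps$}. The two $\eps$-carrying features of \eqref{eq:nif-random} --- the a priori unbounded drift $bN_\eps$ and the a priori unbounded killing rate $1/\eps$ --- would each, treated naively (for instance by bounding the survival factor crudely by $e^{-t/\eps}$), collapse the bound to something that degenerates as $\eps\to0$; the whole design is to neutralise them, the first by the dichotomy on $\int N_\eps$, the second by expressing $\mathbb P(\kappa_1\le t_0+t)$ through the kill-free diffusion. Everything else --- conservation of mass, Chebyshev, the compensator identity, short-time displacement bounds for an Ornstein--Uhlenbeck diffusion, and the elementary inequality $1-e^{-x}\ge c\,x$ on a bounded interval --- is routine.
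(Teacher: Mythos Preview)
Your proof is correct and takes a genuinely probabilistic route, whereas the paper's main argument is PDE-based. Both reduce to the same core idea: bound $\int N_\eps$ from below by the probability that the process has fired at least once, then compare the not-yet-fired process to the kill-free Ornstein--Uhlenbeck diffusion and use a Gaussian tail bound. The paper implements this via an auxiliary density $p_\eps^{\text{not}}$ (the Fokker--Planck equation with the $\phi_\eps$-loss term but no reset), the pointwise comparisons $p_\eps^{\text{not}}\le p_\eps$ and $p_\eps^{\text{not}}\le\tilde p_\eps$, and a differential inequality plus Gronwall to absorb the $1/\eps$; it then invokes a lemma bounding $\int_{V_F}^\infty\tilde p_\eps(s,\cdot)$ below by $e^{-C/s}$, whose probabilistic proof (given in an appendix) is close in spirit to your Chebyshev-plus-Gaussian-tail step. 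You instead write $\mathbb P(\kappa_1\le t_0+t)=\mathbb E\bigl[1-\exp\bigl(-\tfrac1\eps\int\mathbf 1_{\{Y>V_F\}}\bigr)\bigr]$ directly via the exponential clock, which neutralises the $1/\eps$ in one stroke without Gronwall, and then lower-bound the occupation time by a hitting-plus-staying argument rather than a single-time tail probability. Your dichotomy on $\int N_\eps$ is unnecessary here since $b>0$ throughout the paper, so the pathwise comparison you mention in parentheses suffices and exactly mirrors the paper's use of $bN_\eps\,\partial_v\psi\ge0$. What your approach buys is a self-contained probabilistic argument that bypasses the auxiliary PDE and the Gronwall step; what the paper's approach buys is that the intermediate objects $p_\eps^{\text{not}}$ and $\tilde p_\eps$ are reused elsewhere, notably in the $L^2$ estimate.
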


We first show how to go from Prop.~\ref{prop:t-lower-N} to Prop.~\ref{prop:finer-Q}.

\begin{proof} [Proof of Prop.~\ref{prop:finer-Q}]
Note that as $N_{\eps}\geq 0$, we can extend \eqref{est_t-lower-N}  from time interval $(0,1/2)$ to all $t>0$ as 
\begin{equation}\label{extend-tmp}
        \delta:= \int_{t_0}^{t_0+  t}N_{\eps}(s)ds \geq \begin{cases}
            \exp(-C/ t),\quad 0<t<\frac{1}{2},\\
            \exp(-2C),\quad t\geq 1/2.
        \end{cases} 
\end{equation} The left hand side, denoted as $\delta$, is a time duration in the $\tau$ timescale. If we restrict to the case when $\delta< \exp(-2C)=:c$, then we are in the first regime of \eqref{extend-tmp}, which gives
\begin{equation}
         \exp(-C/ t)\leq \delta.
\end{equation} 
This is equivalent to
\begin{equation}\label{tmp-Q-fine}
        t\leq \frac{C}{|-\ln \delta|}.
\end{equation}
Note that the time duration $t$ corresponds to the integral $\int_{\tau_0}^{\tau_0+\delta}Q_{\eps}(s)ds$ in the new timescale. Therefore, from \eqref{tmp-Q-fine} we deduce \eqref{est_finer_Q} with $0<\delta<c$. {We can extend the result to $0<\delta\leq\frac{1}{2}$ by dividing a larger interval into smaller ones and possibly enlarging $C$ by a factor of $(\frac{1}{2c}+2)$.} Note that the dependence of the constant $C$ on second moment can be transformed to that on time $\tau_0$ thanks to Prop.~\ref{prop:second-moment}. %c,\; 
\end{proof}

\subsection{Proof of Prop. \ref{prop:t-lower-N}}\label{subsec:pf-prop3}

We recall the random discharge system in $t$-timescale~\eqref{eq:nif-random}
\beq \label{eq:peps}
\begin{cases}
     \p_t p_\eps(t,v) + \p_v [(-v  + bN_\eps(t)) p_\eps] - a \p_{vv} p_\eps +  \phi_\eps p_\eps = N_\eps(t)\delta_{V_R} , \quad t \geq 0 , \; v\in \R,
 \\[5pt]
  N_\eps (t) :=\int_\R \phi_\eps(v) p_\eps (t,v) dv 
 \\[5pt]
  p_\eps (t=0,v)= n^0 (v),\quad v\in \R.
\end{cases}
\eeq
Without loss of generality, we set $t_0=0$. 

\begin{proof}[Proof of Prop. \ref{prop:t-lower-N}]
The goal is to make a comparison to a Fokker-Planck equation for which analytical calculations are tractable, namely
\beq 
\begin{cases}
     \p_\tau \tilde{p}_\eps(t,v) + \p_v [(-v  + bN_\eps(t)) \tilde{p}_\eps] - a \p_{vv} \tilde{p}_\eps = 0, \quad t \geq 0 , \; v\in \R,
 \\[5pt]
\tilde{p}_{\eps} (t=0,v)= n^0 (v),\quad v\in \R.
 \end{cases}
\label{FP-OU-recall}
\eeq
To this end, we introduce an auxiliary problem similar to \cite{JGLiuZZ21,JGLiuZZ22} 
 \beq 
\begin{cases}
     \p_t p_\eps^{\text{not}}(t,v) + \p_v [(-v  + bN_\eps(t)) p_\eps^{\text{not}}] - a \p_{vv} p_\eps^{\text{not}} +  \phi_\eps p_\eps^{\text{not}} =0 , \quad t \geq 0 , \; v\in \R,
 \\[5pt]
p_\eps^{\text{not}} (t=0,v)= n^0 (v),\quad v\in \R.
 \end{cases}
\label{IF_not}
\eeq  
Compared with \eqref{eq:peps}, \eqref{IF_not} does not have the reset term and thus $p_{\eps}$ in \eqref{eq:peps} is a supersolution of the linear equation \eqref{IF_not}. Therefore, we have a pointwise comparison%$p_\eps^{\text{not}}$ is a subsolution of \eqref{eq:peps}. Therefore, we have a pointwise comparison
\begin{equation}\label{pointwise-1}
    p_\eps^{\text{not}}(t,v)\leq p_\eps(t,v),\qquad t\geq0,\quad v\in\R,
\end{equation}
 which implies
\begin{equation}\label{bd-N}
    N_\eps^{\text{not}}(t): =\int_{\R} \phi_\eps p_\eps^{\text{not}}(t,v)dv\leq \int_{\R} \phi_\eps p_\eps(t,v)dv= N_\eps(t).
\end{equation} Compared with \eqref{FP-OU-recall}, \eqref{IF_not} has an additional loss term and thus $\tilde{p}_\eps(t,v)$ in \eqref{FP-OU-recall} is also a supersolution of \eqref{IF_not}. Therefore, we have another  pointwise comparison
\begin{equation}\label{pointwise-2}
         p_\eps^{\text{not}}(t,v)\leq \tilde{p}_\eps(t,v) ,\qquad t\geq0,\quad v\in\R
\end{equation}% which

We now estimate $ N_\eps^{\text{not}}(t)$. Integrating Eq. \eqref{IF_not} in $v$, we find that the total mass is decreasing
\begin{equation*}%\label{mass-no}
    \frac{d}{dt}\int_{\R} p_\eps^{\text{not}}(t,v)dv=-\int_{\R} \phi_\eps p_\eps^{\text{not}}(t,v)dv=: -N_\eps^{\text{not}}(t),
\end{equation*} 
and, as the initial mass is $1$, we obtain
\begin{align}%\label{bd-loss}
   \int_0^{t}N_\eps^{\text{not}}(s)ds &=   1-\int_{\R} p_\eps^{\text{not}}(t,v)dv \notag
   \\
   &=\int_{\R} [(\tilde{p}_{\eps}(t,v) - p_\eps^{\text{not}}(t,v)]dv 
   \geq \int_{V_F}^\infty  [\tilde{p}_{\eps}(t,v) - p_\eps^{\text{not}}(t,v)]dv,%= \int_{V_F}^\infty \tilde{p}_{\eps}(t,v) dv- \eps N_\eps^{\text{not}}(t).\label{pf-sec3-tmp}
\end{align} where in the last line we first used that $\tilde{p}_{\eps}(t,\cdot)$ is always of integral one thanks to the mass conservation in \eqref{FP-OU-recall}, and then used the pointwise comparison \eqref{pointwise-2}. Substituting the definition of $N_{\eps}^{\text{not}}$ \eqref{bd-N}, we obtain
\begin{equation}
     \int_0^{t}N_\eps^{\text{not}}(s)ds\geq \int_{V_F}^\infty \tilde{p}_{\eps}(t,v) dv- \eps N_\eps^{\text{not}}(t).\label{pf-sec3-tmp}
\end{equation}

Therefore, we conclude that 
\begin{equation*}
    \frac{d}{dt}\left(e^{t/\eps}\int_0^{t}N_{\eps}^{\text{not}}(s)ds\right)\geq \frac{1}{\eps}e^{t/\eps}\,\int_{V_F}^\infty \tilde{p}_{\eps}(t,v) dv, 
\end{equation*} 
and, recalling \eqref{bd-N}, we finally arrive at
\begin{equation}\label{S2-conclusion}
    \int_0^{t}N_{\eps}(s)ds\geq \int_0^{t}N_{\eps}^{\text{not}}(s)ds \geq \int_0^t\frac{1}{\eps}e^{(s-t)/\eps}\, \int_{V_F}^\infty \tilde{p}_{\eps}(s,v) dvds.
\end{equation}
Combined with Lemma \ref{lem:OUbis} below, we derive that 
\begin{align}
        \int_0^{t}N_{\eps}(s)ds&\geq  \int_{\frac t 2}^t\frac{1}{\eps}e^{(s-t)/\eps}\, e^{-C/s}ds \geq e^{-2C/t} \int_{\frac t 2}^t\frac{1}{\eps}e^{(s-t)/\eps}\, ds \notag
        \\&= e^{-2C/t} (1-e^{-\frac{t}{2\eps}}), \notag
\end{align} 
which concludes the proof of Prop. \ref{prop:t-lower-N}. 
\end{proof}

\begin{lemma} \label{lem:OUbis}
Assuming $n^0 \geq 0$ satisfies $\int n^0=1$ and $\int v^2 n^0(v)dv \leq C_0$, then there is a constant $C$ independent of $N_\eps\geq0$ such that the solution of \eqref{FP-OU-recall} satisfies $\int_{V_F}^\infty \tilde{p}_{\eps}(t_0,v) dv \geq e^{-\frac {C} {t_0}}$ for $0<t_0< \frac 12$.
\end{lemma}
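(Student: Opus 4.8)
The key observation is that \eqref{FP-OU-recall} is a \emph{linear} Fokker--Planck equation in which the unknown $N_\eps$ enters only through a time-dependent shift of the drift. Hence its fundamental solution is an explicit Gaussian (Mehler) kernel, and the contribution of $N_\eps$ amounts to a translation to the right, which can only increase the mass above $V_F$. Concretely, set $\sigma_t^2:=a(1-e^{-2t})$ and $A(t):=b\int_0^t e^{-(t-s)}N_\eps(s)\,ds$, which solves $A'+A=bN_\eps$, $A(0)=0$; since $b>0$ and $N_\eps\ge0$ we have $A(t)\ge0$. For fixed $\eps$, $N_\eps$ is bounded, so the solution of \eqref{FP-OU-recall} is classically
\[
\tilde p_\eps(t,v)=\int_\R \frac{1}{\sqrt{2\pi\sigma_t^2}}\exp\!\Big(-\frac{(v-e^{-t}w-A(t))^2}{2\sigma_t^2}\Big)\,n^0(w)\,dw
\]
(one may also just check that the right-hand side solves \eqref{FP-OU-recall} and invoke uniqueness).

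Writing $\Phi_c(x):=\int_x^\infty \frac{1}{\sqrt{2\pi}}e^{-r^2/2}dr$, which is decreasing on $\R$, integration in $v$ gives
\[
\int_{V_F}^\infty \tilde p_\eps(t_0,v)\,dv=\int_\R \Phi_c\!\Big(\frac{V_F-e^{-t_0}w-A(t_0)}{\sigma_{t_0}}\Big)n^0(w)\,dw .
\]
Because $A(t_0)\ge0$, the $N_\eps$-dependence only decreases the argument of $\Phi_c$, so it can be dropped in a lower bound. By Chebyshev's inequality, $\int_{\{|w|\le R\}}n^0(w)\,dw\ge 1-C_0/R^2$; choosing $R:=\sqrt{2C_0}$ makes this mass $\ge\frac12$. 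For $|w|\le R$ and $t_0\ge0$ one has $e^{-t_0}w\ge -R$, hence $\frac{V_F-e^{-t_0}w-A(t_0)}{\sigma_{t_0}}\le \frac{V_F+R}{\sigma_{t_0}}$, and monotonicity of $\Phi_c$ yields
\[
\int_{V_F}^\infty \tilde p_\eps(t_0,v)\,dv\ \ge\ \frac12\,\Phi_c\!\Big(\frac{V_F+R}{\sigma_{t_0}}\Big).
\]

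It remains to insert the elementary Gaussian lower tail bound $\Phi_c(x)\ge c_1 e^{-c_2 x^2}$, valid for all $x\in\R$ with universal constants $c_1,c_2>0$ (for $x\ge1$ it follows from $\Phi_c(x)\ge\frac1{\sqrt{2\pi}}e^{-(x+1)^2/2}$; for $x\le1$ from $\Phi_c(x)\ge\Phi_c(1)$), together with the time dependence of $\sigma_{t_0}$: the inequality $1-e^{-y}\ge ye^{-y}$ with $y=2t_0$ gives, for $0<t_0<\frac12$, $\sigma_{t_0}^2=a(1-e^{-2t_0})\ge 2ae^{-1}t_0$. Combining,
\[
\int_{V_F}^\infty \tilde p_\eps(t_0,v)\,dv\ \ge\ \frac{c_1}{2}\exp\!\Big(-\frac{c_2\,e\,(V_F+R)^2}{2a\,t_0}\Big)\ \ge\ e^{-C/t_0},
\]
the last inequality being valid for $0<t_0<\frac12$ (the prefactor $\frac{c_1}{2}\le1$ is absorbed into the exponent, using $1/t_0>2$). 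The constant $C$ depends only on $a,b,V_F$ and $C_0$, in particular not on $N_\eps$, which is the claim.

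The statement is elementary once this structure is recognized; the only point requiring care is that $N_\eps$ is an a priori unknown function, and this is resolved at the outset by observing that it enters solely as the nonnegative translation $A(t)$. If one prefers to avoid the explicit kernel, the first step can be replaced by the identity $\tilde p_\eps(t,v)=q(t,v-A(t))$, where $q$ solves the pure Ornstein--Uhlenbeck equation ($N_\eps\equiv0$), followed by the classical Mehler representation of $q$; the remaining estimates are unchanged.
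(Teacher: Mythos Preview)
Your proof is correct and matches the paper's approach essentially line for line—indeed, it is the PDE transcription of the probabilistic proof the authors give in their appendix (explicit Mehler/Gaussian representation, nonnegativity of the $N_\eps$-induced translation, Chebyshev on the initial data, then a Gaussian tail bound with $\sigma_{t_0}^2\gtrsim t_0$). The paper's main-text proof reaches the same estimate through a dual backward-equation argument, but the key step there (the subsolution property coming from $\partial_v\psi\ge0$ and $bN_\eps\ge0$) is exactly your monotonicity observation $A(t_0)\ge0$, viewed on the adjoint side.
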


\begin{proof}
    \textit{Step 1. A subsolution to the dual problem.} 
    We consider the following auxillary problem
    \begin{align} \label{OUbis:sol}\begin{cases}
- \partial_t \psi (t,v)  + v \partial_v \psi - a \partial_{vv} \psi = 0, \qquad t\in (0,t_0), \; v \in \R, 
\\
\psi (t_0,v)= \ind{v \geq V_F}.
\end{cases}
\end{align} We claim that $\psi$ is a subsolution of the backward equation of \eqref{FP-OU-recall}, satisfying \begin{align} \label{OUbis:supersol} \begin{cases}
- \partial_t \psi (t,v) - (-v+b N_{\eps}(t)) \partial_v \psi - a \partial_{vv} \psi \leq 0, \qquad t\in (0,t_0), \; v \in \R, 
\\
\psi (t_0,v)= \ind{v \geq V_F}.
\end{cases}
\end{align} Indeed, it follows from that $b N_{\eps}(t) \partial_v \psi \geq 0$, because $w:=\partial_v \psi \geq 0$ since it satisfies
\begin{align*} \begin{cases}
- \partial_t w (t,v)  + v \partial_v w +w  - a \partial_{vv} w = 0, \qquad t\in (0,t_0), \; v \in \R, 
\\
w (t_0,v)=\delta_{v=V_F}\geq 0.
\end{cases}
\end{align*}Then by duality, we have
\begin{align} \label{OUbis:dual}
\int_{V_F}^\infty \tilde{p}_{\eps}(t_0,v) dv = \int_\R \tilde{p}_{\eps} \psi(t_0,v) dv \geq \int_\R n^0(v) \psi(0,v) dv.
\end{align}

\noindent\textit{Step 2. Using the second moment.} It remains to estimate the left hand side of \eqref{OUbis:dual}. We first take $A>0$ large such that $A^2>2C_0$ and $A>|V_F|+2$ (to be used later) and deduce 
\begin{align}\label{est:OUbis1}
\int_{-A}^\infty n^0= 1- \int_{-\infty}^{-A}  n^0 \geq 1- \int_{-\infty}^{-A} \frac{v^2}{A^2} n^0 \geq 1- \frac{C_0}{A^2}= \frac 1 2.
\end{align} Therefore, using that $\psi,\p_v\psi\geq 0$ and \eqref{OUbis:dual} we derive
\begin{align}
    \int_{V_F}^\infty \tilde{p}_{\eps}(t_0,v) dv&\geq \int_{-A}^{+\infty}n^0(v) \psi(0,v) dv\\&\geq  \int_{-A}^{+\infty}n^0(v) \psi(0,-A) dv\geq \frac{1}{2}\psi(0,-A).\label{oubis:tmp}
\end{align}
\noindent\textit{Step 3. Estimate $\psi$. } Indeed, the solution of \eqref{OUbis:sol} can be computed as follows: Define the time variable $s=a(t)$ by 
\[
\frac{da(t)}{dt}=-e^{2(t-t_0)}, \qquad a(t_0)=0, \quad \text{and set} \quad S=a(0)>0,
\] and note that $t_0/e\leq t_0e^{-2t_0} \leq S \leq t_0 $ as $t_0<1/2$. Consider
\[
{\tilde \psi}(a(t),v):= \psi(t, ve^{t_0-t}),
\] which satisfies
\begin{align} \begin{cases}
\partial_s {\tilde \psi} (s,v) - a \partial_{vv} {\tilde \psi} = 0, \qquad s \in (0,S), \; v \in \R, 
\\
{\tilde \psi} (0,v)= \ind{v \geq V_F}.
\end{cases}
\end{align} As a consequence, we have
\begin{equation}
    {\tilde \psi} (S,v)= \frac{C}{\sqrt{S}} \int_{V_F}^\infty e^{-\frac{|v-w|^2}{4aS}}dw,\quad\text{and thus
}\quad \psi(0,v)= \frac{C}{\sqrt{S}} \int_{V_F}^\infty e^{-\frac{|ve^{-t_0}-w|^2}{4aS}}dw.
\end{equation} In particular, recalling \eqref{oubis:tmp} we deduce that
\begin{align*}
    \int_{V_F}^\infty \tilde{p}_{\eps}(t_0,v) dv\geq\frac{1}{2}\psi(0,-A)&=\frac{C}{\sqrt{S}} \int_{V_F}^\infty e^{-\frac{|Ae^{-t_0}+w|^2}{4aS}}dw\\&\geq \frac{C}{\sqrt{S}} \int_{V_F}^{V_F+1} e^{-\frac{|Ae^{-t_0}+w|^2}{4aS}}dw\geq \frac{C}{\sqrt{S}} e^{-\frac{|Ae^{-t_0}+V_F+1|^2}{4aS}}\geq e^{-\frac{C}{t_0}},
\end{align*} where in the last step we used $1/e\leq S/t_0\leq 1$.  Thus the proof of Lemma \ref{lem:OUbis} is complete.

\end{proof}

Thanks to diffusion we intuitively know $\int_{V_F}^\infty \tilde{p}_{\eps}(t_0,v) dv$ shall be positive, and Lemma \ref{lem:OUbis} gives a quantitative bound.
 \begin{remark}
 A probabilistic proof of Lemma \ref{lem:OUbis} is given in Appendix \ref{sec:OU-prob-proof}, using stochastic differential equations (Ornstein–Uhlenbeck process). 
 \end{remark}

\subsection{Remarks on the auxiliary problems}\label{subsec:summary-auxiliary}

     In the proof of Prop.~\ref{prop:t-lower-N}, we introduced $\tilde{p}_{\eps}$ which solves the more standard Fokker-Planck equation \eqref{FP-OU-recall}. Compared to \eqref{FP-OU-recall}, there are two additional effects in the random discharge problem \eqref{eq:peps}: spike (loss) and reset (gain), which make a direct comparison principle between \eqref{eq:peps} and \eqref{FP-OU-recall} unavailable. This motivates us to introduce $p_{\eps}^{\text{not}}$, which solves \eqref{IF_not} and satisfies the comparisons \eqref{pointwise-1} and \eqref{pointwise-2}.

     Indeed, a further decomposition of $p_{\eps}$ holds  as in \cite{JGLiuZZ21,JGLiuZZ22}
\begin{equation}\label{decompose-t}
    p_{\eps}(t,v)=p_{\eps}^{\text{not}}(t,v)+p_{\eps}^{\text{spike}}(t,v).
\end{equation}  
Here $p_{\eps}^{\text{not}}(\tau,v)$ represents the population that has not spiked yet and $p_{\eps}^{\text{spike}}(\tau,v)$ is the part that has spiked at least once. The latter satisfies a system with zero initial data
      \beq 
\begin{cases}
     \p_t p_\eps^{\text{spike}}(t,v) + \p_v [(-v  + bN_\eps(t)) p_\eps^{\text{spike}}] - a \p_{vv} p_\eps^{\text{spike}} +  \phi_\eps p_\eps^{\text{spike}} =N_{\eps}(t)\delta_{V_R} , \quad t \geq 0 , \; v\in \R,
 \\[5pt]
p_\eps^{\text{spike}} (t=0,v)=0,\quad v\in \R.
 \end{cases}
\label{IF_spike}
\eeq 
We have 
\begin{equation}
    N_{\eps}(t)=N_\eps^{\text{not}}(t)+N_\eps^{\text{spike}}(t),\qquad N_\eps^{\text{spike}}(t):=\int_{\R} \phi_\eps p_\eps^{\text{spike}}(t,v)dv,
\end{equation}
\begin{equation}
    \frac{d}{dt}\int_{\R} p_\eps^{\text{spike}}(t,v)dv=N_\eps^{\text{not}}(t)=-\frac{d}{dt}\int_{\R} p_\eps^{\text{not}}(t,v)dv,
\end{equation} and 
\begin{equation}
    \int_{\R} [p_\eps^{\text{not}}(t,v)+ p_\eps^{\text{spike}}(t,v)] dv=1.
\end{equation}
%\end{remark}

In Section~\ref{sec:L2} we will also introduce another auxiliary problem $\bar{p}_{\eps}$, satisfying \eqref{system:redL2}
\begin{equation*}
    \p_t \bar{p}_{\eps} +\p_v  [(-v +b N_{\eps}(t)) \bar{p}_{\eps}] -a\p_{vv} \bar{p}_{\eps} = N_{\eps} (t)\delta_{V_R}(v),\qquad \bar{p}_{\eps}(0,v)=0.
\end{equation*} This system does not have the loss term, compared to \eqref{IF_spike}. Therefore, we have another pointwise comparison~$p_{\eps}^{\text{spike}}\leq\bar{p}_{\eps}$.

For readers' convenience we remark on the later usages of these auxiliary problems. The decomposition \eqref{decompose-t} will be used in Section~\ref{subsec:red}.  Besides, $\bar{p}_{\eps}$ plays an important role in Section~\ref{sec:L2}, where we will also detail the motivations to introduce it.

\section{The bound $n_\eps \in L^\infty_{\tau,\text{loc}} ( L^2_v)$}
\label{sec:L2}

 The controls of $n_{\eps}$ so far only guarantee weak limits in the space of measures, not integrable functions. In particular, the possibility that a Dirac mass is formed at $V_F$ is not ruled out, making it ambiguous to define $\int_{V_F}^{+\infty}ndv$, which is an important quantity in the limit. This motivates us to prove the following $L^2$ estimate.
\begin{proposition}\label{prop:L2tau}
    Assume \eqref{as:ID}. Then, for all $\tau_0>0$, there exists a constant $C(\tau_0)>0$ such that, for all $\varepsilon>0$, {the solution of \eqref{IFnew}} $n_{\eps}$ satisfies
\beq \label{eq:L2-sec4}
 \int_{\R}n_{\eps}^{2}(\tau,v)dv\leq C(\tau_0), \qquad 0\leq \tau\leq \tau_0.
\eeq
\end{proposition}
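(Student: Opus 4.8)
The plan is to split $n_\eps$ according to whether a neuron has discharged, i.e. to read the decomposition \eqref{decompose-t} in the dilated timescale, $n_\eps=n_\eps^{\rm not}+n_\eps^{\rm spike}$ with $n_\eps^{\rm not},n_\eps^{\rm spike}\ge 0$, and to bound each summand in $L^2_v$ separately, using $\|n_\eps(\tau)\|_{L^2}\le\|n_\eps^{\rm not}(\tau)\|_{L^2}+\|n_\eps^{\rm spike}(\tau)\|_{L^2}$. For the non‑discharged part, the comparison \eqref{pointwise-2} written in the $\tau$ variable gives $0\le n_\eps^{\rm not}(\tau,v)\le\tilde n_\eps(\tau,v)$, where $\tilde n_\eps$ solves the linear Fokker–Planck equation \eqref{FP-OU-recall} transported to the new timescale, $\p_\tau\tilde n_\eps+\p_v[(-vQ_\eps+b)\tilde n_\eps]-aQ_\eps\p_{vv}\tilde n_\eps=0$. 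Here the $L^2$ estimate is elementary because the drift $-vQ_\eps+b$ has $v$‑divergence $-Q_\eps$, which is $\eps$‑independent: multiplying by $\tilde n_\eps$ and integrating yields $\tfrac{d}{d\tau}\int_\R\tilde n_\eps^2\le Q_\eps\int_\R\tilde n_\eps^2$, and Grönwall together with $\int_0^{\tau_0}Q_\eps\,d\tau\le C(\tau_0)$ from \eqref{eq:est_Qtau} gives $\|\tilde n_\eps(\tau)\|_{L^2}^2\le e^{C(\tau_0)}\|n^0\|_{L^2}^2$ on $[0,\tau_0]$, hence $\|n_\eps^{\rm not}(\tau)\|_{L^2}\le e^{C(\tau_0)/2}\|n^0\|_{L^2}$ uniformly in $\eps$.

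For the discharged part I would use the pointwise comparison $0\le n_\eps^{\rm spike}\le\bar n_\eps$, with $\bar n_\eps$ the dilated‑timescale version of the auxiliary problem \eqref{system:redL2} (no loss term, pure reset source, zero initial datum),
\[
\p_\tau\bar n_\eps+\p_v\big[(-vQ_\eps+b)\bar n_\eps\big]-aQ_\eps\p_{vv}\bar n_\eps=\delta_{V_R}(v),\qquad \bar n_\eps(0,\cdot)=0 ,
\]
which is exactly the content announced as Prop.~\ref{prop:redL2}: it remains to bound $\|\bar n_\eps(\tau)\|_{L^2}$ uniformly in $\eps$. Multiplying by $\bar n_\eps$ and integrating gives the energy identity
\[
\tfrac12\tfrac{d}{d\tau}\int_\R\bar n_\eps^2\,dv+aQ_\eps\int_\R(\p_v\bar n_\eps)^2\,dv=\tfrac{Q_\eps}{2}\int_\R\bar n_\eps^2\,dv+\bar n_\eps(\tau,V_R),
\]
together with the elementary mass identity $\int_\R\bar n_\eps(\tau,v)\,dv=\tau$. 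The term $\tfrac{Q_\eps}{2}\int\bar n_\eps^2$ is harmless, producing after Grönwall only the factor $e^{\int_0^{\tau_0}Q_\eps}\le e^{C(\tau_0)}$ thanks to \eqref{eq:est_Qtau}.

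\textbf{Main obstacle.} The whole difficulty lies in the source trace $\bar n_\eps(\tau,V_R)=\int_\R\delta_{V_R}\bar n_\eps$. Because the diffusion coefficient $aQ_\eps$ degenerates as $Q_\eps\to0$ near the blow‑up times, this term cannot be absorbed into the dissipation by a trace/Gagliardo–Nirenberg inequality (that would cost a non‑integrable factor $(aQ_\eps)^{-1/3}$); worse, $\bar n_\eps(\tau,V_R)$ need not be bounded in $\tau$ — it may develop a logarithmic singularity precisely where $Q_\eps$ vanishes — so one cannot hope for a pointwise control. The correct statement, which I would isolate as a lemma, is the bound \emph{after integration in time},
\[
\int_0^{\tau_0}\bar n_\eps(\tau,V_R)\,d\tau\le C(\tau_0),\qquad\text{uniformly in }0<\eps<1 .
\]
Granting this, one drops the nonnegative dissipation term, integrates the energy identity, and applies Grönwall in the reparametrised ``time'' $\Theta(\tau):=\int_0^\tau Q_\eps$ (with $\Theta(\tau_0)\le C(\tau_0)$ by \eqref{eq:est_Qtau}): this gives $\|\bar n_\eps(\tau_0)\|_{L^2}^2\le C(\tau_0)$, hence $\|n_\eps^{\rm spike}(\tau_0)\|_{L^2}\le C(\tau_0)$, and combined with the first paragraph this proves \eqref{eq:L2-sec4}. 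Note the constants depend on $\tau_0$ only through $\|n^0\|_{L^2}$ and (via Prop.~\ref{prop:second-moment}) the second moment.

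It remains to prove the integrated bound, which is the real work. I would use the \emph{explicit} fundamental solution of the Ornstein–Uhlenbeck–type operator $\p_\tau+\p_v[(-vQ_\eps+b)\cdot]-aQ_\eps\p_{vv}$ — a Gaussian whose mean $\mu(\sigma,\tau)$ and variance $\Sigma(\sigma,\tau)=a\big(1-e^{-2(\Theta(\tau)-\Theta(\sigma))}\big)$ solve linear ODEs driven by $Q_\eps$ (in the spirit of the computations in Lemma~\ref{lem:OUbis}) — to represent $\bar n_\eps(\tau,V_R)$ by Duhamel as the time‑integral of this Gaussian against $\delta_{V_R}$, and then use Fubini:
\[
\int_0^{\tau_0}\bar n_\eps(\tau,V_R)\,d\tau=\int_0^{\tau_0}I(\sigma)\,d\sigma,\qquad I(\sigma):=\int_\sigma^{\tau_0}\frac{1}{\sqrt{2\pi\,\Sigma(\sigma,\tau)}}\,e^{-(\mu(\sigma,\tau)-V_R)^2/(2\Sigma(\sigma,\tau))}\,d\tau .
\]
Here $I(\sigma)$ is the time‑integrated return density at $V_R$ of the packet emitted at $(\sigma,V_R)$ — equivalently the expected local time at $V_R$ of the associated (time‑inhomogeneous) diffusion up to $\tau_0$ — and the task is to bound it by $C(\tau_0)$ uniformly in $\eps$ and $\sigma$. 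This I expect to follow from a dichotomy on the effective transport speed $|b-V_RQ_\eps|$ at $V_R$: where this speed is bounded below the packet is carried away from $V_R$ and $I(\sigma)$ is controlled by the inverse speed (the diffusive correction being harmless since $\Sigma\le a$); where the speed is small one necessarily has $Q_\eps$ bounded below, hence $aQ_\eps$ bounded below, and the elementary heat‑kernel local‑time estimate $I(\sigma)\lesssim\sqrt{\tau_0\,aQ_\eps}\le C(\tau_0)$ applies. Matching the two regimes quantitatively — in particular the short‑lag contribution where $\Sigma(\sigma,\tau)$ is comparable to $(\mu(\sigma,\tau)-V_R)^2$ — is where the refined estimate $\int_\tau^{\tau+\delta}Q_\eps\,ds\le C(\tau_0)/|{-}\ln\delta|$ of Prop.~\ref{prop:finer-Q} enters, preventing $Q_\eps$ from being simultaneously small and slowly varying near a blow‑up time.
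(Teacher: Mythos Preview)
Your decomposition $n_\eps=n_\eps^{\rm not}+n_\eps^{\rm spike}$ and the comparison arguments with $\tilde n_\eps$ and $\bar n_\eps$ match the paper exactly (the paper does this in the $t$-timescale and then transfers to~$\tau$, but this is cosmetic). Your reduction of the $\bar n_\eps$ estimate to the time-integrated trace $\int_0^{\tau_0}\bar n_\eps(\tau,V_R)\,d\tau$ via the energy identity is also correct and is logically equivalent to bounding $\|\bar n_\eps\|_{L^2}^2$ directly.

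The genuine gap is in your proposed proof of the trace bound. Your plan is to show $I(\sigma)\le C(\tau_0)$ uniformly in~$\sigma$, but this is \emph{false}. Take $V_R=0$ and $Q_\eps(\tau)=\tau/2$ near $\tau=0$ (equivalently $N(t)\sim t^{-1/2}$, which is in $L^1_+$ and satisfies the refined estimate of Prop.~\ref{prop:finer-Q} comfortably, since $\int_0^\delta Q_\eps=\delta^2/4\ll 1/|{-}\ln\delta|$). Then $\Sigma(0,\tau)\sim a\tau^2/2$ and $\mu(0,\tau)-V_R\sim b\tau$, so the exponent $(\mu-V_R)^2/(2\Sigma)\to b^2/a$ stays bounded while $1/\sqrt{\Sigma}\sim 1/\tau$, giving $I(0)=\int_0^{\tau_0}C/\tau\,d\tau=+\infty$; more generally $I(\sigma)\sim |\ln\sigma|$ for small $\sigma>0$. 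This happens entirely in what you call the transport-dominated regime ($b-V_RQ_\eps=b>0$ everywhere), so your dichotomy breaks: transport does \emph{not} control $I(\sigma)$ when the diffusive spread $\Sigma$ and the squared displacement $(\mu-V_R)^2$ grow at the same rate. (Your heat-kernel formula also has the $Q_\eps$ upside-down, but that is a side issue.) Note that the target $\int_0^{\tau_0}I(\sigma)\,d\sigma$ \emph{is} finite here---the logarithmic singularity is integrable---so the lemma you want is true, but not by a uniform bound on $I(\sigma)$, and Prop.~\ref{prop:finer-Q} does not help.

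The paper avoids this altogether: it computes $\|\bar p_\eps(t)\|_{L^2}^2$ directly via the Duhamel identity
\[
\int_\R\bar p^2\,dv=\int_0^t\!\!\int_0^t N(s_1)N(s_2)\,\frac{1}{\sqrt{2\pi(\sigma_1^2+\sigma_2^2)}}\,e^{-(V_1-V_2)^2/(2(\sigma_1^2+\sigma_2^2))}\,ds_1ds_2,
\]
which is the $L^2$ pairing of two Gaussians rather than a single Gaussian evaluated at a point. The crucial gain is the sum $\sigma_1^2+\sigma_2^2$: for fixed $s_1$ it is bounded below by $\sigma_1^2>0$, so the inner integral over $s_2$ has no singularity at $s_2=t$. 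The remaining uniformity in $s_1$ is obtained by decomposing $s_2$ into the level sets $\{k\le (V_1-V_2)^2/[2(\sigma_1^2+\sigma_2^2)]<k+1\}$ and observing (from the transport formula $V_1-V_2=\int_{s_1}^{s_2}e^{u-t}(bN(u)-V_R)\,du$) that on the $k$-th set $\int_{s_1}^{s_2}N\big/\sqrt{\sigma_1^2+\sigma_2^2}\lesssim\sqrt{k+1}$, so the sum $\sum_k\sqrt{k+1}\,e^{-k}$ converges. This uses only the $L^1$ bound on $Q_\eps$ from Prop.~\ref{prop:second-moment}, not Prop.~\ref{prop:finer-Q}.
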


We emphasize that, despite its seemingly simple statement, Prop.~\ref{prop:L2tau} involves a subtle interplay of various mechanisms. We will first explain these intuitions before presenting the proof.

\subsection{Observations and Intuitions}\label{sec:intuition-L2}

 To explain the intuitions we recall \eqref{IFnew} for convenience
\begin{equation*}
     \p_\tau n_\eps(\tau,v) + \p_v [(-v Q_\eps(\tau) + b) n_\eps] - a Q_\eps(\tau) \p_{vv} n _\eps = \delta_{V_R}(v)-Q_\eps(\tau)  \phi_\eps n_\eps , \quad \tau \geq 0 , \; v\in \R.
\end{equation*}
The main difficulty towards a uniform-in-$\eps$ $L^2$ estimate is the Dirac source term $\delta_{V_R}(v)$ in the right hand side. It physically corresponds to the reset of the voltage after the spike. 

First, we note that the result cannot be directly derived using the parabolic regularization effect from the diffusion term $aQ_{\eps}(\tau)\p_{vv}n_{\eps}$, since the diffusion coefficient $aQ_{\eps}(\tau)$ does not have a uniform-in-$\eps$ positive lower bound. Indeed, $aQ_{\eps}(\tau)$ can degenerate as $\eps\rightarrow0^+$ (c.f. \eqref{lower-bound-Q}), which corresponds to the blow-up of the firing rate in the limit. This motivates us to consider the following toy problem, obtained by setting $Q\equiv0$ in \eqref{eq:Qnif} and neglecting the absorption term 
\begin{equation}\label{toy-1-tau}
    \p_{\tau}m_1+b\p_{v}m_1=\delta_{V_R},\quad \tau\geq0,v\in\R,\qquad\qquad m_1(\tau=0,v)=0.
\end{equation} There is no diffusion in \eqref{toy-1-tau}, but its solution can be computed explicitly as
\begin{equation}
    m_1(\tau,v)=\int_0^{\tau}\delta_{V_R}(v-bs)ds=\frac{1}{b}\mathbb{I}_{V_R<v<V_R+b\tau},\quad \tau>0,v\in\R.
\end{equation} The solution is not only in $L^2_v$ but indeed in $L^{\infty}_v$! Thanks to the transport, Dirac masses starting from different times \textit{disperse} in space (i.e. they don't concentrate), which allows for this $L^{\infty}$ bound.  

The analysis above shows that the toy problem \eqref{toy-1-tau} has a mechanism that regularizes the Dirac mass, in the absence of diffusion. Next, we examine whether such a mechanism persists for the full equation \eqref{IFnew}. To this end, it is convenient to work in $t$ timescale and consider 
\begin{equation}\label{toy-1-t}
\p_{t}q_1+bN(t)\p_{v}q_1=N(t)\delta_{V_R},\quad t\geq0,v\in\R,\qquad\qquad q_1(t=0,v)=0.
\end{equation} By a change of time $d\tau=N(t)dt$ from \eqref{toy-1-tau}, we know the solution of \eqref{toy-1-t} is also bounded in $L^{\infty}$ for $N(t)\in L^1_+$, 

To understand the effects of other terms in the full equation, we extend \eqref{toy-1-t} to the following two toy problems
\begin{equation}\label{toy-2-t}
\p_{t}q_2+\p_{v}((-v+bN(t))q_2)=N(t)\delta_{V_R},\quad t\geq0,v\in\R,\qquad\qquad q_2(t=0,v)=0,
\end{equation} 
\begin{equation}\label{toy-3-t}
\p_{t}q_3+bN(t)\p_{v}q_3=a\p_{vv}q_3+N(t)\delta_{V_R},\quad t\geq0,v\in\R,\qquad\qquad q_3(t=0,v)=0.
\end{equation} 
In \eqref{toy-2-t} we include the relaxation term ``$-v$'' in the drift. Then the drift can degenerate at $v=V_R$ if $bN(t)\equiv V_R$. In this case, the Dirac mass generated at $V_R$ accumulates and $q_2$ can itself become a Dirac mass, given by
\begin{equation}\label{solu-Dirac}
    q_2(t,v)=\int_0^{t}N(s)ds\delta_{V_R}(v)=\frac{tV_R}{b}\delta_{V_R}(v),
\end{equation} 
which of course is not in any $L^p_v$! This singular solution only appears when $V_R>0$ since $bN\geq0$. For the full equation, we may avoid such a singularity using the diffusion term. 

Eq.~\eqref{toy-3-t} is obtained by adding the diffusion term to \eqref{toy-1-t}. Surprisingly, for \eqref{toy-3-t} a $L^{\infty}_v$ estimate can not be expected, in contrast to the pure transport case \eqref{toy-1-tau}-\eqref{toy-1-t}. Consider the following example: for $a=\frac 1 2$, fixed $T>0$, we choose $N(t)= \frac{2}{\sqrt{T-t}}$ and therefore $\int_t^T N = \sqrt{T-t}$, and
\begin{align}\label{solu-td}
q_3(T,V_R)&= \int_0^{T}\frac{C}{\sqrt{T-s}}\exp \left(-\frac{(b\int_{T-s}^{t}N(u)du)^2}{T-s}\right)N(s)ds=\int_0^{T}\frac{C}{T-s}ds =+\infty.
\end{align} 
Diffusion and transport fights against each other. A reason behind is that for a given $x$, the Gaussian density ($\approx\frac{1}{\sqrt{t}}\exp(-\frac{x^2}{t})$) at $x$ with variance $t$  is not monotone in $t$: it first increases and achieves the maximum when $x\approx \sqrt{t}$ and then decreases. Such a blow-up in $L^{\infty}$ reflects that the maximum of different Gaussians starting from different times, can localize at the same spatial point, if the transport by $N$ is chosen properly as in \eqref{solu-td}.

To summarize, $b>0$ is essential for the integrability especially if $N$ is large, as in \eqref{toy-1-tau}-\eqref{toy-1-t}. When $V_R>0$, the transport velocity can degenerate when $bN=V_R$. In that case we may expect the diffusion from $a>0$ to help. Nevertheless, the diffusion and the transport can interact in a subtle way as in the example \eqref{solu-td}. This motivates us to look for a $L^2$ estimate instead of a $L^{\infty}$ one. 

Next, we introduce an auxiliary problem which combines \eqref{toy-2-t} and \eqref{toy-3-t}, a careful study of which will be the core of proof of the $L^2$ estimate.

\begin{remark} Physically the condition $V_R>0$ means that $V_R>V_L+I$, where $V_L$ is the leaky voltage and $I$ is the strength of an external input. We have followed a convention in some mathematical literature to assume $V_L+I=0$ without loss of generality, since otherwise we can  translate in $v$ and use e.g. $V_R-(V_L+I)$ in place of $V_R$.
\end{remark}

\subsection{Reduction to a simpler equation}\label{subsec:red}

To prove the $L^2$ estimate as stated in Prop.~\ref{prop:L2tau}, we first make a reduction to a simpler problem. Consider the following equation in $t$-timescale with zero initial data
\beq \label{system:redL2}\begin{cases}
\p_t \bar{p}_{\eps} +\p_v  [(-v +b N_{\eps}(t)) \bar{p}_{\eps}] -a\p_{vv} \bar{p}_{\eps} = N_{\eps} (t)\delta_{V_R}(v),
\\
\bar{p}_{\eps}(0,v)=0.
\end{cases} \eeq Here $N_{\eps}(t)$ is the firing rate of  the random discharge problem in $t$-timescale with a given initial data, as in \eqref{eq:peps}. We note that  $N_{\eps}(t)$ is a non-negative function which is in $L^1_{loc}$ by \eqref{bd-N-eps}. Compared to~\eqref{eq:peps}, in~\eqref{system:redL2} the loss term is removed and the initial data is set to zero, but all the other terms are kept. It is simpler than \eqref{eq:peps} but more complicated than the toy problems discussed in Section \ref{sec:intuition-L2}.

The following proposition for $\bar{p}_{\eps}$ is the key towards Prop.~\ref{prop:L2tau}. 

\begin{proposition} \label{prop:redL2}
For $\bar{p}_{\eps}$ defined in \eqref{system:redL2}, where $N_{\eps}(t)\geq 0$ is taken from \eqref{eq:peps} with an initial data satisfying \eqref{as:ID}, we have
\begin{equation}\label{estimate-redL2}
\int_\R \bar p_{\eps}(t,v)^2 dv \leq C\left(T\right)\int_0^{T}N_{\eps}(s)ds, \qquad \forall \; t\in [0,T].
\end{equation}
\end{proposition}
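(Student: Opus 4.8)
The plan is to study $\bar p_\eps$ via its explicit Duhamel representation against the Green's function of the linear Fokker--Planck operator $\p_t - \p_v[(v - bN_\eps(t))\cdot] - a\p_{vv}$, which is an Ornstein--Uhlenbeck-type operator with a time-dependent drift shift. Writing $G(t,v;s,w)$ for this kernel, the solution reads $\bar p_\eps(t,v) = \int_0^t N_\eps(s)\, G(t,v;s,V_R)\, ds$. Because the drift is linear in $v$, $G$ is an explicit Gaussian: $G(t,v;s,w) = (2\pi \Sigma(s,t))^{-1/2}\exp(-|v - \mu(s,t;w)|^2/(2\Sigma(s,t)))$, where the mean $\mu$ follows the deterministic characteristic $\dot\mu = -\mu + bN_\eps(t)$ started at $w$ at time $s$, and the variance satisfies $\dot\Sigma = -2\Sigma + 2a$, so $\Sigma(s,t) = a(1 - e^{-2(t-s)})$ is \emph{independent of $N_\eps$} and satisfies $c\min(t-s,1) \le \Sigma(s,t) \le a$ for a universal $c>0$. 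This is the key structural point: the noise enters only through the (harmless) mean, while the variance that controls integrability is clean.

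The next step is to expand the square:
\begin{equation}
\int_\R \bar p_\eps(t,v)^2\,dv = \int_0^t\!\!\int_0^t N_\eps(s)N_\eps(s')\left(\int_\R G(t,v;s,V_R)G(t,v;s',V_R)\,dv\right)ds\,ds'.
\end{equation}
The inner $v$-integral of a product of two Gaussians is itself Gaussian in the difference of the means, bounded by $C(\Sigma(s,t)+\Sigma(s',t))^{-1/2} \le C\min(t-s,1)^{-1/2} \wedge \min(t-s',1)^{-1/2}$; in particular it is bounded by $C\,(t - s\vee s')^{-1/2}$ up to the truncation at $1$. Plugging this in and using $N_\eps(s)N_\eps(s') \le \frac12(N_\eps(s)^2 + N_\eps(s'^{}))$... — but $N_\eps^2$ is not controlled, so instead I symmetrize and bound $\int_0^t\int_0^t N_\eps(s)N_\eps(s')(t-s\vee s')^{-1/2}\,ds\,ds' \le 2\int_0^t N_\eps(s)\left(\int_0^s N_\eps(s')\,(t-s)^{-1/2}\,ds'\right)ds$... which still has a double $N_\eps$. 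The correct route is: the pointwise bound $\min(t-s,1)^{-1/2}$ alone is \emph{not} integrable enough against a single $N_\eps\in L^1$, so one must instead exploit that the two means $\mu(s,t;V_R)$ and $\mu(s',t;V_R)$ spread apart in $v$ because of the transport by $b$ — this is exactly the dispersion mechanism highlighted in the toy problem \eqref{toy-1-tau}. Concretely, $\mu(s,t;V_R) - \mu(s',t;V_R) = b\int_{s'}^{s} e^{-(t-u)}\big(\text{stuff}\big)$... — more usefully, writing characteristics backward, two Diracs emitted at times $s<s'$ at the same point $V_R$ are separated at time $t$ by an amount comparable to $b\int_s^{s'} e^{-(t-u)}\,du$ when $N_\eps$ is small there, and by a genuinely larger amount when $N_\eps$ is large (since the extra drift $bN_\eps$ is the same for both but acts over a longer $\tau$-time for the earlier one). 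So I would change variables in the double integral from $(s,s')$ to $\tau$-like variables via $d\sigma = N_\eps\,dt$, or more robustly, split the $v$-integral estimate into the regime $|s - s'|$ small (use the variance lower bound $\Sigma \gtrsim t - s\vee s'$) and $|s-s'|$ not small (use the mean-separation lower bound, which gains an exponential in the Gaussian), and in each regime the resulting kernel is integrable against $N_\eps(s)\,ds$ uniformly, yielding a bound of the form $C(T)\int_0^T N_\eps$.

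The main obstacle I expect is precisely this last point: producing a kernel $K(s,s')$ with $\int_0^t\int_0^t N_\eps(s)N_\eps(s') K(s,s')\,ds\,ds' \le C(T)\int_0^T N_\eps$, i.e. trading one factor of $N_\eps$ for a uniform constant. A clean way to organize it is an $L^2$--energy estimate directly on \eqref{system:redL2} rather than the Duhamel formula: multiply by $\bar p_\eps$, integrate, and handle the Dirac source $N_\eps(t)\delta_{V_R}$ by $\int \bar p_\eps N_\eps \delta_{V_R} = N_\eps(t)\bar p_\eps(t,V_R)$, then bound $\bar p_\eps(t,V_R)$ using a one-sided pointwise/sup estimate in $v$ that costs $\|\bar p_\eps\|_{L^2}$ times a diffusion factor and absorb via the good term $a\int|\p_v\bar p_\eps|^2$ coming from diffusion — but the diffusion coefficient here is the \emph{constant} $a$ (we are in $t$-timescale!), which is why the reduction to \eqref{system:redL2} was made and why the estimate can hold. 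One then closes a Gronwall-type inequality for $\|\bar p_\eps(t)\|_{L^2}^2$ with forcing proportional to $N_\eps(t)$, together with the second-moment control of $\bar p_\eps$ (inherited from Prop.~\ref{prop:second-moment}-type arguments applied to \eqref{system:redL2}) to control the $-v\p_v$ drift term, giving \eqref{estimate-redL2}. I would pursue this energy route as the primary argument, keeping the Gaussian computation as the source of intuition for why the constant depends on $T$ only through $\int_0^T N_\eps$.
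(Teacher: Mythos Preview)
Your Duhamel setup is identical to the paper's through the identity
\[
\int_\R \bar p^2(t,v)\,dv = \int_0^t\!\!\int_0^t N(s_1)N(s_2)\,\frac{1}{\sqrt{2\pi(\sigma_1^2+\sigma_2^2)}}\,e^{-\frac{(V_1-V_2)^2}{2(\sigma_1^2+\sigma_2^2)}}\,ds_1\,ds_2,
\]
and you correctly identify that the mean separation $V_1-V_2$ is the dispersion mechanism that must trade one factor of $N$ for a constant. The paper does exactly this: it fixes $s_1$, proves the inner integral is $\le C(T)$ uniformly in $s_1$, and the key device you are missing is a level-set decomposition $A_k = \{s_2: k \le \tfrac{(V_1-V_2)^2}{2(\sigma_1^2+\sigma_2^2)} < k+1\}$. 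On $A_k$ the exponential gives $e^{-k}$, while the formula $V_1-V_2 = \int_{s_1}^{s_2} e^{u-t}(bN(u)-V_R)\,du$ turns the upper bound $|V_1-V_2|\le \sqrt{2(k+1)(\sigma_1^2+\sigma_2^2)}$ into $\int_{s_1}^{s_2} N \le C(T)\sqrt{k+1}\,\sqrt{\sigma_1^2+\sigma_2^2}$ (with a bounded correction when $V_R>0$). Since $\sigma_1^2+\sigma_2^2$ is monotone in $s_2$, this caps $\int_{A_k} N(s_2)(\sigma_1^2+\sigma_2^2)^{-1/2}\,ds_2$ by $C(T)\sqrt{k+1}$, and summing $\sqrt{k+1}\,e^{-k}$ finishes. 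Your ``split into small/large $|s-s'|$'' gestures at this but does not capture the mechanism that converts $\int N$ into $\sqrt{k+1}$ via $V_1-V_2$.

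Your primary route, the direct $L^2$ energy estimate, has a genuine gap. Multiplying by $\bar p$ gives
\[
\tfrac12\tfrac{d}{dt}\|\bar p\|_{L^2}^2 = \tfrac12\|\bar p\|_{L^2}^2 - a\|\p_v\bar p\|_{L^2}^2 + N(t)\,\bar p(t,V_R),
\]
and the only way to handle $N(t)\bar p(t,V_R)$ with the dissipation is Gagliardo--Nirenberg, $\bar p(t,V_R)\le C\|\bar p\|_{L^2}^{1/2}\|\p_v\bar p\|_{L^2}^{1/2}$, which after Young leaves a forcing $C\,N(t)^{4/3}\|\bar p\|_{L^2}^{2/3}$. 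Closing this requires $N\in L^{4/3}_t$, which is \emph{not} available uniformly in $\eps$ (only $N\in L^1_t$ with $\|N\|_\infty\le 1/\eps$). Moreover, a pointwise bound on $\bar p(t,V_R)$ is hopeless: the paper's example \eqref{solu-td} with $N(t)=2/\sqrt{T-t}$ (which is in $L^1$) gives $\bar p(T,V_R)=+\infty$ even for the full equation with the $-v$ drift, since for $s$ near $T$ one has $(V(s,T)-V_R)^2/\sigma^2(s,T)\to b^2c^2/a$ and the integrand behaves like $(T-s)^{-1}$. This is precisely why the paper reduces to the Duhamel computation rather than an energy argument: the diffusion is constant in $t$, yes, but that is not enough to absorb a Dirac source weighted by an $L^1$-only coefficient. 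You should stay with the Gaussian route and implement the level-set argument.
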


Before proving Prop.~\ref{prop:redL2}, we first show how it implies Prop.~\ref{prop:L2tau}. %
\begin{proof}[Proof of Prop. \ref{prop:L2tau}]
    \textit{Step 1. Estimate in $t$.} Consider $p_{\eps}(t,v)$, the solution of the random discharge model in $t$-timescale \eqref{eq:peps}. We shall make use of the decomposition \eqref{decompose-t} in Section \ref{subsec:summary-auxiliary}, $p_{\eps}=p_{\eps}^{\text{not}}+p_{\eps}^{\text{spike}}$, which was originally introduced in \cite{JGLiuZZ21,JGLiuZZ22}. 

    For $p_{\eps}^{\text{not}}$, it satisfies \eqref{IF_not}, the equation without any Dirac sources. Multiplying \eqref{IF_not} by $p_{\eps}^{\text{not}}$ and integrating by parts, it is standard to prove 
    \begin{equation}
       \frac{d}{dt} \int_{\R} \frac{1}{2}(p_{\eps}^{\text{not}})^2dv+ \int_{\R}a|\p_{v}p_{\eps}^{\text{not}}|^2dv+\int_{\R}\phi_{\eps}(p_{\eps}^{\text{not}})^2dv=\int_{\R} \frac{1}{2}(p_{\eps}^{\text{not}})^2dv,
    \end{equation}which implies thanks to the Gronwall lemma
    \begin{equation}
        \int_{\R} (p_{\eps}^{\text{not}})^2(t,v)dv\leq e^t\int_{\R} (n^0)^2(v)dv\leq C(T),\qquad \forall \; t\in [0,T].
    \end{equation} 

    For $p_{\eps}^{\text{spike}}$ which satisfies \eqref{IF_spike}, we make a comparison with $\bar{p}_{\eps}$, the solution of Eq.~\eqref{system:redL2}. Indeed, $p_{\eps}^{\text{spike}}$ is then a subsolution of \eqref{system:redL2} since the additional term $\phi_\eps p_\eps^{\text{spike}}\geq0$, which is in the left hand side of \eqref{IF_spike}. Therefore we have pointwise $0\leq p_{\eps}^{\text{spike}}\leq \bar{p}_{\eps}$, leading to
    \begin{equation}
        \int_{\R} (p_{\eps}^{\text{spike}})^2(t,v)dv\leq \int_{\R} (\bar{p}_{\eps})^2(t,v)dv\leq C\left(T\right)\int_0^{T}N_{\eps}(s)ds,\qquad \forall \; t\in [0,T].
    \end{equation}  In the last step we used Prop.~\ref{prop:redL2}.    

    All together using the decomposition \eqref{decompose-t} we derive a $L^2$ bound in $t$-timescale
\begin{align}
    \int_{\R} (p_{\eps})^2(t,v)dv&\leq 2 \int_{\R} (p_{\eps}^{\text{not}})^2(t,v)dv+2 \int_{\R} (p_{\eps}^{\text{spike}})^2(t,v)dv\\&\leq C\left(T,\int_0^{T}N_{\eps}(s)ds\right),\qquad \forall \; t\in [0,T].
\end{align}
\noindent\textit{Step 2. Return to $\tau$.} Due to \eqref{def:tau} we have $T=\int_0^{\tau_0}Q_{\eps}(\tau)d\tau$ with $\tau_0=\int_0^{T}N_{\eps}(s)ds$. Then in $\tau$ timescale the above estimate becomes, for $0\leq\tau\leq \tau_0$,
\begin{equation}
   \int_{\R} (n_{\eps})^2(\tau,v)dv\leq  C\left(\int_0^{\tau_0}Q_{\eps}(\tau)d\tau,\tau_0\right)\leq C(\tau_0).
\end{equation}In the last inequality we used the $L^1$ bound $\int_0^{\tau_0}Q_{\eps}(\tau)d\tau\leq C(\tau_0)$ proved in Prop.~\ref{prop:second-moment}. This proves Prop.~\ref{prop:L2tau}.
\end{proof}

\begin{remark}
Step 2. of the above proof shows an advantage of working in $\tau$ timescale: the $L^1$ bound on $N(t)$ is automatically guaranteed. Considerations on the time-$t$ duration are detailed in Section~\ref{sec:back-t}. 
\end{remark}

\subsection{Proof of Prop. \ref{prop:redL2}} 

Note that the dependence on $\eps$ in \eqref{system:redL2} is only through $N_{\eps}\geq0$, an external input which determines the solution $\bar{p}_{\eps}$. For the sake of simplifying notations,  we drop the dependence on $\eps$ in $\bar{p}_{\eps}$ and $N_{\eps}$, and seek an estimate of the form \eqref{estimate-redL2} for a given $N(t)\geq0$ in $L^1_{\text{loc}}$ and $\bar{p}$ satisfying 
\begin{equation}\label{redL2-simplified-notation}
    \p_t \bar{p} +\p_v  [(-v +b N(t)) \bar{p}] -a\p_{vv} \bar{p} = N (t)\delta_{V_R}(v),\quad t>0,v\in\R,\qquad
\bar{p}(0,v)=0,\quad v\in\R.
\end{equation}

\begin{proof}[Proof of Prop. \ref{prop:redL2}] We work with the simplified notations as in \eqref{redL2-simplified-notation}.
    
\noindent \textit{Step 1. Duhamel Representation.}
 The solution of \eqref{redL2-simplified-notation} can be represented by the Duhamel formula
\begin{equation}\label{rep-Duhamel}
    \bar{p}(t,v)=\int_0^{t}N(s)G(s,t,v)ds,\quad t\geq0,v\in \R,
\end{equation}
where $G(s,t,v)$ is the Green function starting from $v=V_R$ at time $s$ for the (inhomogeneous) linear part of~\eqref{redL2-simplified-notation}. More precisely, it satisfies
\begin{equation}
        \p_t G +\p_v  [(-v +b N(t)) G] -a\p_{vv}G =0,\quad t>s,v\in\R,\qquad
G(s, s,v)=\delta_{V_R}(v),\quad s\geq0,v\in\R.
\end{equation} The Green function $G$ has a closed form expression as the probability density of the Gaussian random variable
\begin{equation}\label{G-density}
    G(s,t,v)=\frac{1}{\sqrt{2\pi} \sigma(s,t)}\exp\left(-\frac{(v-V(s,t))^2}{2\sigma^2(s,t)}\right),\quad t>s,v\in\R,
\end{equation} where its mean $V(s,t)$ is given by
\begin{equation}\label{formula-mean}
    V(s,t):= e^{s-t}V_R+b\int_s^{t}e^{u-t}N(u)du,
\end{equation} and the variance is given by
\begin{equation}\label{variance-4}
    \sigma^2(s,t):={2a}\int_s^{t}e^{2(u-t)}du=a(1-e^{-2(t-s)}),\qquad \sigma(s,t)>0.
\end{equation}
\noindent  \textit{Step 2. An identity.} Now we compute the $L^2$ norm of $\bar{p}$. Using \eqref{rep-Duhamel} we have
\begin{align}
\bar{p}^2(t,v)&=\int_0^{t}N(s)G(s,t,v)ds\int_0^{t}N(s)G(s,t,v)ds\\&=\int_0^{t}\int_0^{t}N(s_1)N(s_2)G(s_1, t,v)G(s_2, t,v)ds_1ds_2.
\end{align} Hence, using the Fubini theorem, we derive
\begin{align}
\int_{\R}\bar{p}^2(t,v)dv&=\int_{\R}\int_0^{t}\int_0^{t}N(s_1)N(s_2)G(s_1, t,v)G(s_2, t,v)ds_1ds_2 dv\\&=\int_0^{t}\int_0^{t}N(s_1)N(s_2)\left(\int_{\R}G(s_1, t,v)G(s_2, t,v) dv\right)ds_1ds_2.
\end{align} 
Here appears an integral of a pair of Green functions starting from different times, which can be computed explicitly 
\begin{equation}
    \int_{\R}G(s_1, t,v)G(s_2, t,v) dv=\frac{1}{\sqrt{2\pi(\sigma_1^2+\sigma_2^2)}}e^{-\frac{(V_1-V_2)^2}{2(\sigma_1^2+\sigma_2^2)}},
\end{equation} 
where we use the shorthands 
\begin{equation}\label{shorthand}
    V_i:=V(s_i, t),\quad\sigma_i:=\sigma(s_i, t), \qquad i=1,2.
\end{equation} 
Therefore, we arrive at an expression for the $L^2$ norm
\begin{equation}\label{identity-L2}
\int_{\R}\bar{p}^2(t,v)dv=\int_0^{t}\int_0^{t}N(s_1)N(s_2)\frac{1}{\sqrt{2\pi(\sigma_1^2+\sigma_2^2)}}e^{-\frac{(V_1-V_2)^2}{2(\sigma_1^2+\sigma_2^2)}}ds_1ds_2.
\end{equation}

\noindent  \textit{Step 3. Elementary Preparations.} We shall estimate the $L^2$ norm using \eqref{identity-L2}. As preparations, we note the following elementary facts from the definitions of $V_i$ and $\sigma_i$ \eqref{shorthand}, \eqref{formula-mean} and~\eqref{variance-4}. Firstly using $e^{s_1-t}-e^{s_2-t}=-\int_{s_1}^{s_2}e^{u-t}du$ we have
\begin{align}
    V_1-V_2&=b\int_{s_1}^{s_2}e^{u-t}N(u)du+(e^{s_1-t}-e^{s_2-t})V_R\\&= \int_{s_1}^{s_2}e^{u-t}(bN(u)-V_R)du.\label{formula-VV}
\end{align} We also notice that
\begin{equation}\label{formula-sigma}
   2ae^{-t}\bigl(t-s_i\bigr)\leq \sigma_i^2={2a}\int_{s_i}^{t}e^{2(u-t)}du\leq 2a \bigl(t-s_i\bigr),
\end{equation} which implies
\begin{equation}\label{estimate-s1s2}
    2ae^{-t}\bigl(t-s_1+t-s_2\bigr) \leq\sigma_1^2+\sigma_2^2\leq 2a\bigl(t-s_1+t-s_2\bigr).
\end{equation}

We can rewrite \eqref{identity-L2} using the symmetry between $s_1,s_2$, only treating the part $0\leq s_1\leq s_2\leq t$,
\begin{align}\label{symmetric}
\int_{\R}\bar{p}^2(t,v)dv=2\int_0^{t}N(s_1)\left(\int_{s_1}^{t}N(s_2)\frac{1}{\sqrt{2\pi(\sigma_1^2+\sigma_2^2)}}e^{-\frac{(V_1-V_2)^2}{2(\sigma_1^2+\sigma_2^2)}}ds_2\right)ds_1.
\end{align} 
This identity allows us to conclude 
\begin{equation}\label{stronger-bound}
    \int_{\R}\bar{p}^2(t,v)dv \leq C(T)\int_0^{t}N(s_1)ds_1, \qquad \forall t\in[0,T],
\end{equation}
and thus the result \eqref{estimate-redL2}, if we can show the following uniform-in-$s_1$ bound
\begin{equation}\label{claim}
    \int_{s_1}^{t}N(s_2)\frac{1}{\sqrt{2\pi(\sigma_1^2+\sigma_2^2)}}e^{-\frac{(V_1-V_2)^2}{2(\sigma_1^2+\sigma_2^2)}}ds_2\leq C(T), \qquad \forall s_1\in[0,t],\, t\in[0,T],
\end{equation} 
% Indeed, if \eqref{claim} holds, combining it with \eqref{symmetric} we can derive

% in the case $V_R\leq 0$. %Indeed, in this case we obtain a stronger result as $C_0$ does not depend on time.

It remains to prove the claim \eqref{claim}. We first consider the case $V_R\leq 0$ for which a simpler proof can be given.

\noindent  \textit{Step 4. Decomposition: Case $V_R\leq 0$.}   To show \eqref{claim}, now we fix $s_1\in[0,t]$. Note that when $V_R\leq 0$, we have $bN(u)-V_R\geq0$. Therefore by \eqref{formula-VV} the map $s_2\mapsto (V_1-V_2)\geq0$ is non-decreasing for $s_2\in[s_1,t)$. Therefore, since $s_2\mapsto (\sigma^2_1+\sigma_2^2)$ is decreasing \eqref{formula-sigma}, we deduce that the map 
\begin{equation}\label{mono-tmp}
    s_2\mapsto \frac{(V_1-V_2)^2}{2(\sigma_1^2+\sigma_2^2)}\qquad \text{is (continuous and) non-decreasing for $s_2\in[s_1,t)$.}
\end{equation} { Note that the non-decreasing map \eqref{mono-tmp} ranges from its value at $s_2=s_1$, which is zero, to its value at $s_2=t$. The latter is finite for each fixed $s_1<t$, but can go to infinity when $s_1\rightarrow t^-$. As we are looking for a bound that is uniform in $s_1$, for fixed $s_1$  we define the following smaller intervals thanks to \eqref{mono-tmp}}
\begin{equation}\label{def-alphak}
    [\alpha_k,\alpha_{k+1}):=\{s_2\in[s_1,t),\,\, k\leq \frac{(V_1-V_2)^2}{2(\sigma_1^2+\sigma_2^2)}<k+1\},\qquad k\geq 0, k\in\mathbb{Z},
\end{equation} which gives a decomposition of $[s_1,t)$ as
\begin{equation}
    [s_1,t)=\bigcup_{k= 0}^{\infty}[\alpha_k,\alpha_{k+1}).
\end{equation}

Therefore for fixed $s_1$
\begin{align}
    I:= \int_{s_1}^{t}N(s_2)\frac{1}{\sqrt{2\pi(\sigma_1^2+\sigma_2^2)}}e^{-\frac{(V_1-V_2)^2}{2(\sigma_1^2+\sigma_2^2)}}ds_2&=\sum_{k=0}^{\infty}\int_{\alpha_k}^{\alpha_{k+1}} N(s_2)\frac{1}{\sqrt{2\pi(\sigma_1^2+\sigma_2^2)}}e^{-\frac{(V_1-V_2)^2}{2(\sigma_1^2+\sigma_2^2)}}ds_2\\&=: \sum_{k=0}^{\infty} I_k.\label{decompose-Ik}
\end{align} 

\noindent \textit{Step 5. Estimates: Case $V_R\leq 0$.} Now we estimate the integral $I_k$ defined on each sub-interval $[\alpha_k,\alpha_{k+1})$. First, by the lower bound in \eqref{def-alphak} we deduce
\begin{align}\label{Ik-tmp-1}
    I_k\leq \int_{\alpha_k}^{\alpha_{k+1}}\frac{ N(s_2)}{\sqrt{2\pi(\sigma_1^2+\sigma_2^2)}} e^{-k}ds_2=e^{-k}\int_{\alpha_k}^{\alpha_{k+1}}\frac{ N(s_2)}{\sqrt{2\pi(\sigma_1^2+\sigma_2^2)}} ds_2.
\end{align} Also, recalling the formula \eqref{formula-VV}, by the upper bound in \eqref{def-alphak} we derive for any $s_2\in[\alpha_{k},\alpha_{k+1})$,
\begin{equation*}
   \frac{\int_{s_1}^{s_2}e^{u-t}(bN(u)-V_R)du}{\sqrt{2(\sigma_1^2+\sigma_2^2)}} =\frac{V_1-V_2}{\sqrt{2(\sigma_1^2+\sigma_2^2)}}<\sqrt{k+1},
\end{equation*} Therefore, using $V_R\leq 0$ we obtain
\begin{align}
     be^{-t}\frac{\int_{s_1}^{s_2}N(u)du}{\sqrt{2(\sigma_1^2+\sigma_2^2)}}\leq \frac{\int_{s_1}^{s_2}e^{u-t}bN(u)du}{\sqrt{2(\sigma_1^2+\sigma_2^2)}}\leq \sqrt{k+1},
\end{align} which gives
\begin{equation}
    \frac{\int_{s_1}^{s_2}N(u)du}{\sqrt{2\pi(\sigma_1^2+\sigma_2^2)}}\leq\frac{e^{t}\sqrt{k+1}}{b\sqrt{\pi}},\qquad \forall s_2\in[\alpha_k,\alpha_{k+1}).
\end{equation} By continuity this also holds for $s_2=\alpha_{k+1}$. Now, using that  $s_2\mapsto (\sigma^2_1+\sigma_2^2)$ is decreasing \eqref{formula-sigma}, we deduce in \eqref{Ik-tmp-1}
\begin{align}
    I_k\leq e^{-k}\frac{\int_{\alpha_k}^{\alpha_{k+1}}{ N(s_2)} ds_2 }{{\sqrt{2\pi(\sigma_1^2+\sigma_2^2)|_{s_2=\al_{k+1}}}}}\leq e^{-k}\frac{\int_{s_1}^{\alpha_{k+1}}{ N(s_2)} ds_2 }{{\sqrt{2\pi(\sigma_1^2+\sigma_2^2)|_{s_2=\al_{k+1}}}}}\leq  \frac{e^{t}\sqrt{k+1}}{b\sqrt{\pi}} e^{-k},
\end{align} 
which gives the estimate for each term $I_k$. Summing up in $k$ we conclude using \eqref{decompose-Ik}
\begin{equation}\label{final-Vrleq0}
    I\leq \sum_{k=0}^{\infty} \frac{e^{t}}{b\sqrt{\pi}} \sqrt{k+1}e^{-k}=\frac{e^{t}}{b\sqrt{\pi}}\sum_{k=0}^{\infty}  \sqrt{k+1}e^{-k}\leq C(T)<+\infty.
\end{equation} This proves the claim \eqref{claim}, and therefore completes the proof for the case $V_R\leq 0$.

\noindent \textit{Step 6. Case $V_R> 0$.} Now we prove \eqref{claim} for the case $V_R>0$. We still fix $s_1\in[0,t]$. This case is more subtle as we no longer have the monotonicity in \eqref{mono-tmp}. Nevertheless, we still define the (measurable) sets
\begin{equation}
    A_k:=\{s_2\in[s_1,t]:\, k\leq\frac{(V_1-V_2)^2}{2(\sigma_1^2+\sigma_2^2)}\leq k+1\},\quad k\geq 0,k\in\mathbb{Z},
\end{equation} which gives a decomposition of $[s_1,t]$ as $[s_1,t]=\bigcup_{k= 0}^{\infty}A_k$. Therefore, for fixed $s_1$ we obtain
\begin{align}
    I:= \int_{s_1}^{t}N(s_2)\frac{1}{\sqrt{2\pi(\sigma_1^2+\sigma_2^2)}}e^{-\frac{(V_1-V_2)^2}{2(\sigma_1^2+\sigma_2^2)}}ds_2&\leq \sum_{k=0}^{\infty}\int_{A_k} N(s_2)\frac{1}{\sqrt{2\pi(\sigma_1^2+\sigma_2^2)}}e^{-\frac{(V_1-V_2)^2}{2(\sigma_1^2+\sigma_2^2)}}ds_2\\&=: \sum_{k=0}^{\infty} J_k.\label{decompose-Jk}
\end{align} 

To estimate each $J_k$, we first use the lower bound in \eqref{def-alphak} to deduce
\begin{align}\label{Jk-tmp-1}
    J_k\leq \int_{A_k}\frac{ N(s_2)}{\sqrt{2\pi(\sigma_1^2+\sigma_2^2)}} e^{-k}ds_2=e^{-k}\int_{A_k}\frac{ N(s_2)}{\sqrt{2\pi(\sigma_1^2+\sigma_2^2)}} ds_2.
\end{align} Then, we recall \eqref{formula-VV} and use the upper bound in \eqref{def-alphak} to get for any $s_2\in A_k$,
\begin{equation}\label{Jk-tmp2}
   \frac{\int_{s_1}^{s_2}e^{u-t}(bN(u)-V_R)du}{\sqrt{2(\sigma_1^2+\sigma_2^2)}} =\frac{V_1-V_2}{\sqrt{2(\sigma_1^2+\sigma_2^2)}}\leq \sqrt{k+1}.
\end{equation} Now we need to face the difficulties from $V_R>0$. The key observation is that $V_R$ only gives a \textit{bounded} perturbation of the right hand side of \eqref{Jk-tmp2}. More precisely, thanks to \eqref{estimate-s1s2} we derive, recalling $s_1\leq s_2\leq t$,
\begin{align}
         \frac{\int_{s_1}^{s_2}e^{u-t}V_Rdu}{\sqrt{2(\sigma_1^2+\sigma_2^2)}}&\leq \frac{\int_{s_1}^{s_2}V_Rdu}{\sqrt{4ae^{-t}\bigl(t-s_1+t-s_2\bigr)}}= \frac{(s_2-s_1)V_R}{\sqrt{4ae^{-t}\bigl(t-s_1+t-s_2\bigr)}}\\&\leq \frac{(t-s_1)V_R}{\sqrt{4ae^{-t}\bigl(t-s_1+t-s_2\bigr)}} \leq \frac{\sqrt{t-s_1}V_R}{\sqrt{4ae^{-t}}}  \leq \frac{1}{\sqrt{4a}}e^{\frac{1}{2}t}\sqrt{t}V_R.
\end{align} As a consequence, using \eqref{Jk-tmp2} we obtain for $s_2\in A_k$
\begin{align}
     be^{-t}\frac{\int_{s_1}^{s_2}N(u)du}{\sqrt{2(\sigma_1^2+\sigma_2^2)}}&\leq \frac{\int_{s_1}^{s_2}e^{u-t}bN(u)du}{\sqrt{2(\sigma_1^2+\sigma_2^2)}}+\frac{\int_{s_1}^{s_2}e^{u-t}V_Rdu}{\sqrt{2(\sigma_1^2+\sigma_2^2)}}\\&\leq \sqrt{k+1}+\frac{1}{\sqrt{4a}}e^{\frac{1}{2}t}\sqrt{t}V_R,
\end{align} which gives
\begin{equation}
    \frac{\int_{s_1}^{s_2}N(u)du}{\sqrt{2\pi(\sigma_1^2+\sigma_2^2)}}\leq\frac{e^{t}\sqrt{k+1}}{b\sqrt{\pi}}+\frac{1}{\sqrt{4\pi a}}e^{\frac{3}{2}t}\sqrt{t}V_R,\qquad \forall s_2\in A_k.
\end{equation} In particular this holds for $s_2=\alpha_{k+1}^*=:\sup_{s_2\in A_k}s_2$, the rightmost point in $A_k$. Then, we use that  $s_2\mapsto (\sigma^2_1+\sigma_2^2)$ is decreasing in $s_2$ thanks to \eqref{formula-sigma} to deduce in \eqref{Jk-tmp-1}
\begin{align}
    J_k\leq e^{-k}\frac{\int_{A_k}{ N(s_2)} ds_2 }{{\sqrt{2\pi(\sigma_1^2+\sigma_2^2)|_{s_2=\al_{k+1}^*}}}}&\leq e^{-k}\frac{\int_{s_1}^{\alpha_{k+1}^*}{ N(s_2)} ds_2 }{{\sqrt{2\pi(\sigma_1^2+\sigma_2^2)|_{s_2=\al_{k+1}^*}}}}\\&\leq  \frac{e^{t}\sqrt{k+1}}{b\sqrt{\pi}} e^{-k}+\frac{1}{\sqrt{4\pi a}}e^{\frac{3}{2}t}\sqrt{t}V_Re^{-k}.
\end{align} Taking the sum in $k$ we conclude using \eqref{decompose-Jk}
\begin{align}
        I&\leq \sum_{k=0}^{\infty}\left( \frac{e^{t}\sqrt{k+1}}{b\sqrt{\pi}}+\frac{1}{\sqrt{4\pi a}}e^{\frac{3}{2}t}\sqrt{t}V_R \right)e^{-k}\\&=\frac{e^{t}}{b\sqrt{\pi}}\sum_{k=0}^{\infty}  \sqrt{k+1}e^{-k}+\frac{1}{\sqrt{4\pi a}}e^{\frac{3}{2}t}\sqrt{t}V_R\sum_{k=0}^{\infty}  e^{-k}\leq C(T)<+\infty.\label{final-VR>0}
\end{align} This proves the claim \eqref{claim} for the case $V_R>0$, and therefore completes the proof.

\end{proof}

 In the right hand side of \eqref{estimate-redL2},  the term $C(T) \int_0^{T}N(s)ds$ depends linearly on the integral of~$N$. This might not be obvious since $p^2$ depends on $N$ in a nonlinear way (c.f. \eqref{identity-L2}). Moreover, in the proof we achieve more concrete information on the constant $C(T)$ as (c.f. \eqref{final-Vrleq0} and \eqref{final-VR>0})
\begin{equation}
    C(T)=\frac{C_1(T)}{b}+\frac{C_2(T)}{\sqrt{a}}\max(V_R,0),
\end{equation} where $C_1(T)$ and $C_2(T)$ do not depend on $a,b$ or $V_R$. This constant is independent of $a$ if $V_R\leq0$ but will blow up as $a\rightarrow0^+$ if $V_R>0$. And it always blows up if $b\rightarrow0^+$. These are consistent with the intuitive discussions on toy problems in Section \ref{sec:intuition-L2}.

%-------------------------------
\section{Passing to the limit}
\label{sec:limit}
%------------------------------

We are now in a position to pass to the limit and establish Theorem~\ref{thm:limit} and thus also Theorem~\ref{th:main}. For that we first need to complete the proof of Theorem~\ref{thm:uni-in-eps-bounds}.

%----------------------------------------------------------
\subsection{Regularity in time}%and limit equation 
%----------------------------------------------------------

We establish here the strong time continuity statement in Theorem~\ref{thm:uni-in-eps-bounds}, that is

\begin{proposition}\label{prop:equi-continuity}
Assume \eqref{as:ID}. The solution of~\eqref{IFnew} $n_\eps$ satisfies that for any given $\psi(v)$ in $C_b + L^2$, the map 
\begin{equation}\label{map-tau}
         \tau \mapsto \int_{\mathbb{R}}\psi(v)n_{\eps}(\tau,v)dv
\end{equation} 
is equi-continuous with respect to  $\eps$ on every finite interval $[0,\tau_0]$.

\end{proposition}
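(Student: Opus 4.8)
The plan is to prove equi-continuity by controlling the time derivative of the map \eqref{map-tau} in a weak sense, uniformly in $\eps$, using the weak formulation \eqref{weak-test} together with the uniform bounds already established in Prop.~\ref{prop:second-moment}, Prop.~\ref{prop:L2tau} and Prop.~\ref{prop:finer-Q}. The key structural point is that the ``$\delta_{V_R}$'' and ``$b\p_v\psi$'' contributions to $\tfrac{d}{d\tau}\int\psi n_\eps$ are bounded in $L^\infty_\tau$ (they produce Lipschitz-in-$\tau$ contributions), whereas all the genuinely problematic terms carry a factor $Q_\eps(\tau)$, whose time integral over a window of length $\delta$ is $O(1/|\ln\delta|)$ by Prop.~\ref{prop:finer-Q}. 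So the modulus of continuity will come out to be of order $|\ln\delta|^{-1}$, which still tends to $0$ as $\delta\to0$, uniformly in $\eps$.

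First I would treat the case $\psi\in C^2_b$ directly from \eqref{weak-test}: integrating between $\tau$ and $\tau+\delta$,
\begin{align*}
\left|\int_\R\psi n_\eps(\tau+\delta,v)dv-\int_\R\psi n_\eps(\tau,v)dv\right|
&\le \big(b\|\p_v\psi\|_\infty+|\psi(V_R)|\big)\delta
\\&\quad+\int_\tau^{\tau+\delta}Q_\eps(s)\int_\R\big|a\p_{vv}\psi-v\p_v\psi-\phi_\eps\psi\big|\,n_\eps(s,v)\,dv\,ds.
\end{align*}
In the last integral the term $a\p_{vv}\psi$ is bounded; $v\p_v\psi$ is controlled by $\|\p_v\psi\|_\infty\int|v|n_\eps\le C(\tau_0)$ via the second-moment bound \eqref{eq:secondmoment}; and the delicate term $\phi_\eps\psi=\tfrac1\eps\psi\,\mathbb I_{v\ge V_F}$ is handled by the definition of $Q_\eps$, since $Q_\eps(s)\int_\R\phi_\eps|\psi|n_\eps\le\|\psi\|_\infty Q_\eps(s)\int\phi_\eps n_\eps=\|\psi\|_\infty$. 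Hence the whole $Q_\eps$-integrand is bounded by a constant $C(\tau_0)$ uniform in $\eps$, and Prop.~\ref{prop:finer-Q} gives the bound $C(\tau_0)\big(\delta+|\ln\delta|^{-1}\big)$, which is the desired uniform modulus of continuity for $\psi\in C^2_b$.

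Next I would remove the $C^2_b$ restriction by a density/splitting argument, which is where the hypothesis $\psi\in C_b+L^2$ matters. Write $\psi=\psi_1+\psi_2$ with $\psi_1\in C_b$ and $\psi_2\in L^2$. For the $C_b$ part, approximate $\psi_1$ uniformly by $\psi_1^\eta\in C^2_b$ with $\|\psi_1-\psi_1^\eta\|_\infty\le\eta$; since $\|n_\eps(\tau,\cdot)\|_{L^1}=1$ for all $\tau$, the error $\int(\psi_1-\psi_1^\eta)n_\eps$ is $\le\eta$ uniformly in $\tau$ and $\eps$, and we apply the previous step to $\psi_1^\eta$. For the $L^2$ part, approximate $\psi_2$ in $L^2$ by $\psi_2^\eta\in C^2_b$ (or $C^2_c$) with $\|\psi_2-\psi_2^\eta\|_{L^2}\le\eta$; then by Cauchy--Schwarz $|\int(\psi_2-\psi_2^\eta)n_\eps|\le\|\psi_2-\psi_2^\eta\|_{L^2}\|n_\eps(\tau,\cdot)\|_{L^2}\le\eta\,C(\tau_0)$ uniformly in $\tau$ and $\eps$, using precisely the uniform $L^2$ bound of Prop.~\ref{prop:L2tau}. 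Combining, for any $\eta>0$ we get $|\int\psi n_\eps(\tau+\delta)-\int\psi n_\eps(\tau)|\le C(\tau_0)\eta+C(\tau_0,\psi^\eta)\big(\delta+|\ln\delta|^{-1}\big)$; sending $\delta\to0$ first and then $\eta\to0$ yields equi-continuity.

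The main obstacle is the term $\phi_\eps\psi n_\eps$, which is $O(1/\eps)$ pointwise and therefore cannot be bounded by naive estimates; the resolution is the observation that it always appears multiplied by $Q_\eps$, and $Q_\eps\phi_\eps n_\eps={\cal S}_\eps$ has total mass $1$, so the product is $O(1)$ after integration in $v$ — this is exactly the reason the $\tau$-timescale is the right one. A secondary (milder) point is that the modulus of continuity is only logarithmic in $\delta$; this is weaker than Hölder but perfectly sufficient for an Arzelà--Ascoli-type compactness argument when passing to the limit $\eps\to0^+$, so no improvement is needed here.
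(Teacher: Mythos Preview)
Your proposal is correct and follows essentially the same route as the paper: bound the time derivative from the weak formulation \eqref{weak-test} for $\psi\in C^2_b$ (using the second-moment bound for the $v\p_v\psi$ term and the identity $Q_\eps\int\phi_\eps n_\eps=1$ for the absorption term), then apply the refined $Q_\eps$ estimate of Prop.~\ref{prop:finer-Q}, and finally extend to $\psi\in C_b+L^2$ by density using $\|n_\eps\|_{L^1}=1$ and the uniform $L^2$ bound of Prop.~\ref{prop:L2tau}. Your write-up is in fact slightly more explicit than the paper's about the logarithmic modulus of continuity and the two-step density argument, but the substance is identical.
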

\begin{proof}
We recall the weak formulation \eqref{weak-test}, for test functions $\psi\in C^2_b(\mathbb{R})$ we have
\begin{equation} \label{weak:forlimit}
    \begin{aligned}
        \frac{d}{d\tau}\int_{\mathbb{R}}\psi(v)n_{\eps}(\tau,v)dv=&\int_{\mathbb{R}}b\p_{v}\psi(v)n_{\eps}(\tau,v)dv +\psi(V_R)\\&+ Q_{\eps}(\tau)\int_{\mathbb{R}}\bigl(a\p_{vv}\psi-v\p_v\psi\bigr)n_{\eps}(\tau,v)dv -\int_{\mathbb{R}} \psi(v) {\cal S}_{\eps}(\tau,v)dv,
    \end{aligned}
\end{equation} where  ${\cal S}_{\eps}(\tau,v)$ is defined in \eqref{def-Seps} with $\int_{\R}{\cal S}_{\eps}(\tau,v)dv=1$. Using the second moment bound \eqref{eq:secondmoment} for $n_\eps$ in Prop.~\ref{prop:second-moment}, we deduce that 
\begin{equation}\label{bd-test}
    \begin{aligned}
    \left| \frac{d}{d\tau}\int_{\mathbb{R}}\psi(v)n_{\eps}(\tau,v)dv\right|\leq &b\|\p_v\psi\|_{\infty}+\|\psi\|_{\infty}\\&+ Q_{\eps}(\tau)\left(a\|\p_{vv}\psi\|_{\infty}+C(\tau)\|\p_{v}\psi\|_{\infty}\right)+\|\psi\|_{\infty}.
\end{aligned}
\end{equation} Thanks to the estimates on $Q_\eps$ in Prop.~\ref{prop:finer-Q}, this proves the (local-in-time) equi-continuity statement for $\psi\in C^2_b$. Using that  $n_\eps (\tau, \cdot) \in \cal P(\R)$, the result extends to $\psi \in C_b$ by a density argument. Similarly using the uniform $L^2$ bound in Prop.~\ref{prop:L2tau}, the result extends to $\psi \in L^2$. Finally, by linearity the result holds for $\psi\in C_b+L^2$.
\end{proof}
%where for the last term we use $Q_{\eps}(\tau)\int_{V_F}^{+\infty}\psi_{\eps}(v)n_{\eps}(\tau,v)dv=1$.
\subsection{Convergent subsequences and the limit equation}\label{subsec:limit}
With usual tools of functional analysis, see~\cite{ReedSimon,Brezis_FA}, and the bounds in Theorem~\ref{thm:uni-in-eps-bounds}, we may extract subsequences (not relabeled) such that for all $\tau_0>0$,  the followings hold.

For $n_{\eps}$, the tightness bound \eqref{eq:secondmoment}, $L^2$ bound \eqref{eq:L2-sec4} and the equal-continuity in Prop.~\ref{prop:equi-continuity} imply that there is a (subsequential) limit $n(\tau)$ satisfying
\begin{equation}
    \label{limit:1} 
    n(\tau)\in\mathcal{P}(\R),\qquad \int_{\R} v^2n(\tau,v)dv+\int_{\R} n^2(\tau,v)dv\leq C(\tau_0),\qquad  0\leq \tau\leq \tau_0.
\end{equation} Here the limit is in the following sense. For any $\psi(v)$ in $C_b + L^2$, as functions in time
\begin{equation}\label{limit:strong-1}
\int_{\mathbb{R}}\psi(v)n_{\eps}(\tau,v)dv \to \int_{\mathbb{R}}\psi(v)n(\tau,v)dv \quad \text{in } C([0,\tau_0]).
\end{equation}

For $Q_{\eps}(\tau)$, by \eqref{eq:est_Qtau} and \eqref{est_finer_Q} we can take a weak limit in measure. More precisely, there exists $Q\in \mathcal{M}_+(0,\tau_0)$ such that for all $\phi(\tau)\in C[0,\tau_0]$
\begin{equation}\label{limit:2-weak}
\int_0^{\tau_0}\phi(\tau)Q_{\eps}(\tau)d\tau\rightarrow \int_0^{\tau_0}\phi(\tau)Q(d\tau).
\end{equation} Moreover, the refined bound \eqref{est_finer_Q} ensures that the limit measure $Q$ does not contain Dirac masses.

Finally the singular term ${\cal S}_{\eps}(\tau,v)$ as defined in \eqref{def-Seps}, which is a probability measure for all $\tau$, satisfies
\begin{equation}\label{limit:3} \begin{cases}
        {\cal S}_{\eps}(\tau,v) \rightharpoonup {\cal S}(\tau,v)\in L^\infty((0,\tau_0);\mathcal{P} (\R)),   \quad \text{weak* in }  \mathcal{M}\big((0,\tau_0)\times\R\big), 
        \\[5pt]
        \int_\R {\cal S}(\tau,dv)=1 \quad \text{for } a.e. \;  \tau >0, \qquad \text{supp } {\cal S}\subseteq [0,\infty)\times[V_F,+\infty),
\end{cases} \end{equation} 
thanks to the tightness implied by \eqref{est_m2rbis}. 
\\

With these convergence results,  we now pass to the limit in the weak formulation~\eqref{weak:forlimit} to obtain that for each test function $\psi(v)\in C^2_b(\mathbb{R})$
\begin{align} \label{WF:limit}
    \frac{d}{d\tau}\int_{\mathbb{R}}\psi(v) n(\tau,v)dv
    =&\int_{\mathbb{R}}b\p_{v}\psi(v)n(\tau,v)dv +\psi(V_R) \notag
    \\ &
    + Q(\tau)\int_{\mathbb{R}}\bigl(a\p_{vv}\psi-v\p_v\psi \bigr)n(\tau,v)dv - \int_{\mathbb{R}} \psi(v) {\cal S}(\tau,v)dv.
\end{align} Here the time derivative $\frac{d}{d\tau}$ is in the weak sense as before. To justify the limit, we only need to take care of the nonlinear terms, which are the products of $Q$ and $n$. The limit of the product can be shown to be the product of limits by a weak-strong convergence argument, using the weak convergence in \eqref{limit:2-weak} and the strong convergence (as functions in time) in \eqref{limit:strong-1}. %By showing \eqref{WF:limit} 

To summarize, we have extracted subsequences from the solution of \eqref{IFnew} $(n_\eps,Q_{\eps},S_{\eps})$ that converge to $(n,Q,S)$ in the weak senses. Moreover, the limit satisfies the weak form of Eq. \eqref{eq:Qnif}.

%----------------------------------------------------------
\subsection{Further properties of $Q(\tau)$ and ${\cal S}(\tau,v)$} %{Further properties of ${\cal S}(\tau,v)$
%----------------------------------------------------------

To further characterize $Q(\tau)$ and ${\cal S}(\tau,v)$, first we aim to derive the complementary relation~\eqref{eq:complementary-relation}
\begin{equation} \label{eq:complementary-relation-repeat}
Q(d\tau)\int_{V_F}^{+\infty}n(\tau,v)dv=0.
\end{equation} Note that $\psi(v)=\mathbb{I}_{v>V_F}$ belongs to  $C_b+L^2$. Therefore the strong limit in time \eqref{limit:strong-1} holds and in the limit we have 
\begin{equation}\label{map-tau-VF}
    \tau \mapsto M(\tau):=\int_{V_F}^{+\infty}n(\tau,v)dv \in C\big([0, \infty)\big).
\end{equation} Thus the right hand side of \eqref{eq:complementary-relation-repeat} is well defined as a measure multiplied by a continuous function. To derive \eqref{eq:complementary-relation-repeat}, we depart from the definition of $Q_{\eps}$ in \eqref{IFnew}
\begin{equation}
    Q_{\eps}(\tau)\int_{V_F}^{+\infty}n_{\eps}(\tau,v)dv=\eps,
\end{equation} and pass to the limit, using the weak convergence of $Q_\eps$ in~\eqref{limit:2-weak} and the strong convergence in \eqref{limit:strong-1} with $\psi(v)=\mathbb{I}_{v>V_F}$. Thus \eqref{eq:complementary-relation-repeat} is established.

The complementary relation \eqref {eq:complementary-relation-repeat} gives information on $Q$ in an implicit way. To state more precise characterizations, since $M(\tau)$ is continuous  we define the sets \begin{align}\label{def-Ibl}
    I_{bl}&:=\{\tau\geq0:M(\tau)>0\}, \qquad \text{an open set},
    \\
    I_{cl}&:=\{\tau\geq0:M(\tau)=0\},\qquad \text{a closed set}.\label{def-Icl}
\end{align} 
Here $I_{bl}$ is for ``blow-up'' and $I_{cl}$ is for ``classical solution''. 

 \begin{remark}
     The set  $I_{cl}$ may contain blow-up points (i.e. $Q(\tau)=0$) therefore it can be strictly larger than the set of times when we can indeed recover classical solutions. This is partially because the complementary relation \eqref{eq:complementary-relation-repeat} does not exclude times when \textit{both} $Q(\tau)$ and $M(\tau)$ are zero. We give such an example in Section \ref{sec:back-t}, see \eqref{steady-state} with $b=V_F-V_R$.
 \end{remark}

%---------------------------------------------------
\subsubsection{The interior of the classical points}\label{subsec:inter-cl}

In the \textit{interior} of the ``classical points'' $\mathring{I}_{cl}\subseteq I_{cl}$ we can go much further. We characterize ${\cal S}$ and recover the standard boundary conditions when $Q>0$.

\begin{proposition}\label{Prop:S-Dirac}
    For $\tau\in \mathring{I}_{cl}$, we have
    \begin{equation}\label{eq:limit-S-cl}
        {\cal S}(\tau,v)=\delta_{V_F}(v),
    \end{equation}
    \begin{equation}\label{eq-boundary-term}
\lim_{\eps\rightarrow0^+}         Q_{\eps}(\tau)n_{\eps}(\tau,V_F)=0,\qquad \lim_{\eps\rightarrow0^+}         \bigl(aQ_{\eps}(\tau)\p_vn_{\eps}(\tau,V_F)+bn_{\eps}(\tau,V_F)\bigr)=1,
    \end{equation} where the convergence holds in the weak sense of measures.
\end{proposition}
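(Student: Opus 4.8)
The plan is to work on a compactly contained subinterval $[\tau_1,\tau_2]\Subset \mathring I_{cl}$, where by continuity of $M(\tau)$ we have $M\equiv 0$, and to exploit the refined $Q_\eps$-bound \eqref{est_finer_Q} together with the $L^2$ bound of Prop.~\ref{prop:L2tau} to control $n_\eps$ near $V_F$. The key point is that on such an interval, for $\eps$ small, $\int_{V_F}^\infty n_\eps(\tau,v)dv \to 0$ uniformly (this is the strong time-continuity \eqref{limit:strong-1} applied to $\psi=\mathbb{I}_{v>V_F}\in C_b+L^2$, combined with $M\equiv 0$ on $[\tau_1,\tau_2]$). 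Since by definition $Q_\eps(\tau)\int_{V_F}^\infty n_\eps = \eps$, this says $Q_\eps(\tau)$ is comparable to $\eps/\!\int_{V_F}^\infty n_\eps$, which need not be small pointwise, but we do control $\int_{\tau_1}^{\tau_2}Q_\eps\,d\tau$.

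First I would establish \eqref{eq:limit-S-cl}. By definition \eqref{def-Seps}, ${\cal S}_\eps(\tau,\cdot)$ is the probability measure $n_\eps(\tau,v)\mathbb{I}_{v\ge V_F}/\!\int_{V_F}^\infty n_\eps$, supported on $[V_F,\infty)$. The tightness bound $\int_0^{\tau_0}\!\int_\R (v-V_F)_+^2 {\cal S}_\eps \le C(\tau_0)$ from Remark~\ref{tightness-Seps} already gives that any weak-$*$ limit ${\cal S}$ is a probability measure on $[V_F,\infty)$. To pin it down as $\delta_{V_F}$ I would show that for any $\eta>0$, $\int_{\tau_1}^{\tau_2}\!\int_{V_F+\eta}^\infty {\cal S}_\eps \, d\tau \to 0$. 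This follows because $\int_{V_F+\eta}^\infty n_\eps(\tau,v)dv \le \frac{1}{\eta^2}\int_{V_F}^\infty (v-V_F)^2 n_\eps(\tau,v)dv$, and the latter, after multiplying by $Q_\eps$ and integrating in $\tau$, is bounded by $C(\tau_0)\eps$ thanks to \eqref{est_m2rbis}; hence $\int_{\tau_1}^{\tau_2}\int_{V_F+\eta}^\infty {\cal S}_\eps\,d\tau = \int_{\tau_1}^{\tau_2}\frac{\int_{V_F+\eta}^\infty n_\eps}{\int_{V_F}^\infty n_\eps}\,d\tau$. Here is where the argument needs care: dividing by $\int_{V_F}^\infty n_\eps$ is not benign. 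The cleaner route is to use $Q_\eps$: write $\int_{V_F+\eta}^\infty {\cal S}_\eps(\tau,v)dv = \frac{1}{\eps}Q_\eps(\tau)\int_{V_F+\eta}^\infty n_\eps(\tau,v)dv \le \frac{1}{\eps\eta^2}Q_\eps(\tau)\int_{V_F}^\infty (v-V_F)^2 n_\eps(\tau,v)dv$, and integrate in $\tau$ to get $\int_{\tau_1}^{\tau_2}\int_{V_F+\eta}^\infty {\cal S}_\eps\,d\tau \le \frac{C(\tau_0)}{\eta^2}\to 0$ as $\eps\to 0$ — wait, that bound is $O(1)$ not $o(1)$. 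So instead I would localize further: on $[\tau_1,\tau_2]$ we have the sharper fact that $\int_0^{\tau_0}Q_\eps\int_\R(v-V_F)_+^2 n_\eps\,d\tau\,dv \le C\eps$ gives directly $\int_{\tau_1}^{\tau_2}\int_{V_F+\eta}^\infty {\cal S}_\eps\,d\tau \le \frac{1}{\eps\eta^2}\cdot C\eps = \frac{C}{\eta^2}$ — still $O(1)$, the $\eps$'s cancel. The correct reading is that ${\cal S}_\eps$ itself has the uniform-in-$\eps$ tightness $\int\int(v-V_F)_+^2{\cal S}_\eps\le C$, so weak limits concentrate near $V_F$ only in the averaged sense; to get $\delta_{V_F}$ pointwise-in-$\tau$ I must use that on $\mathring I_{cl}$, $Q>0$ a.e. forces, through the limiting equation and \eqref{eq:Qbry}, the identification — i.e. I would instead argue from the limit PDE \eqref{eq:Qnif} restricted to $\mathring I_{cl}$, where $M\equiv 0$ means $n(\tau,V_F^+)=0$ and the classical Dirichlet structure is recovered, so the loss measure must be the boundary flux $\delta_{V_F}$; this is the content that \eqref{eq:Qbry} records.

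For the boundary terms \eqref{eq-boundary-term}, the plan is to test the $\eps$-equation \eqref{IFnew} against suitable $\psi$ and pass to the limit using the convergences of Section~\ref{subsec:limit}. Taking $\psi$ smooth with $\psi\equiv 1$ on a neighborhood of $[V_F,\infty)$ and comparing the weak form of the $\eps$-problem with that of the limit on $\mathring I_{cl}$ isolates the combination $aQ_\eps\p_v n_\eps(\tau,V_F)+bn_\eps(\tau,V_F)$ as the flux through $V_F$, which by mass balance on $(V_F,\infty)$ equals $\frac{d}{d\tau}\int_{V_F}^\infty n_\eps - (\text{loss}) + (\text{source }\delta_{V_R}\text{ contributes }0) $; since $\int_{V_F}^\infty n_\eps\to 0$ in $C$, its $\tau$-derivative $\to 0$ weakly, and the loss $\int_{V_F}^\infty {\cal S}_\eps = 1$ identically, giving the flux $\to 1$. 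Similarly $Q_\eps(\tau)n_\eps(\tau,V_F)$: from $Q_\eps\int_{V_F}^\infty n_\eps=\eps$ and the $L^2$-in-$v$ plus equicontinuity bounds, $n_\eps(\tau,V_F)$ is controlled, and $Q_\eps n_\eps(\tau,V_F)$ integrated against a test function tends to $0$ because $\int Q_\eps\,d\tau$ is bounded while $n_\eps(\tau,V_F)\to 0$ in the relevant averaged sense on $\mathring I_{cl}$ (as $n$ vanishes there past $V_F$, and the $L^2$ bound upgrades weak-measure convergence of $n_\eps$ to enough regularity to evaluate the trace). The main obstacle is precisely this last identification of traces: $n_\eps(\tau,V_F)$ is a pointwise value of an $L^2$-in-$v$ function, so it is not continuous in $v$, and making sense of $Q_\eps(\tau)n_\eps(\tau,V_F)$ and $aQ_\eps\p_v n_\eps(\tau,V_F)$ as measures in $\tau$ — and showing their limits — requires a careful trace/Green's-formula argument on the half-line $(V_F,\infty)$, exploiting that there the $\eps$-equation is a genuine (degenerate-coefficient) parabolic equation whose solution has an $H^1$-in-$v$ trace controlled by the dissipation $\int a Q_\eps|\p_v n_\eps|^2$, which one must first extract from an energy estimate uniform on compact subsets of $\mathring I_{cl}$ where $\int Q_\eps$ stays bounded.
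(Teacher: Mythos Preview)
Your approach to \eqref{eq:limit-S-cl} goes wrong in two places. First, you correctly discover that the tightness bound $\int\!\!\int (v-V_F)_+^2 {\cal S}_\eps \le C$ only gives $O(1)$ control on $\int_{V_F+\eta}^\infty {\cal S}_\eps$, so it cannot by itself force the limit onto $\{V_F\}$. But your fallback, ``on $\mathring I_{cl}$, $Q>0$ a.e.\ forces \ldots'', is not available: the paper explicitly allows $Q(\tau)=0$ inside $I_{cl}$ (see the remark right after the definition of $I_{cl}$), and nothing in the construction guarantees $Q>0$ there. The clean argument the paper uses bypasses $Q$ entirely and works directly with the \emph{limit} equation \eqref{WF:limit}: on $\mathring I_{cl}$ we have $n(\tau,v)=0$ for $v>V_F$, so testing \eqref{WF:limit} with any $\psi\in C_c^\infty(V_F,\infty)$, $\psi\ge 0$, makes every term vanish except $-\int \psi\,{\cal S}$. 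Hence ${\cal S}(\tau,\cdot)$ puts no mass on $(V_F,\infty)$, and since it is a probability measure supported in $[V_F,\infty)$ it must be $\delta_{V_F}$. No appeal to $Q>0$ or to any quantitative rate on ${\cal S}_\eps$ is needed.

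For \eqref{eq-boundary-term} your strategy (mass/flux balance on $(V_F,\infty)$) is exactly right, but you over-engineer the trace issue. At fixed $\eps>0$ the solution $n_\eps$ of \eqref{IFnew} is a classical parabolic solution away from $v=V_R$, so $n_\eps(\tau,V_F)$ and $\partial_v n_\eps(\tau,V_F)$ are honest smooth functions of $\tau$; no $H^1$ trace theory is needed. The paper simply multiplies \eqref{IFnew} by $\psi\in C_c^\infty(\R)$ and integrates over $[V_F,\infty)$, producing the boundary terms $a\,\partial_v\psi(V_F)\,Q_\eps n_\eps(\tau,V_F)$ and $\psi(V_F)\bigl(aQ_\eps\partial_v n_\eps(\tau,V_F)+bn_\eps(\tau,V_F)\bigr)$ on the right, together with $-\int_{V_F}^\infty \psi\,{\cal S}_\eps$. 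All bulk terms on the left vanish in the limit on $\mathring I_{cl}$ (the time derivative because $\int_{V_F}^\infty n_\eps\psi\to 0$ in $C_\tau$; the drift/diffusion terms because they are bounded by $C\int_{V_F}^\infty n_\eps$ or by $CQ_\eps\int_{V_F}^\infty n_\eps=C\eps$). Using ${\cal S}=\delta_{V_F}$ from the first part, $\int_{V_F}^\infty\psi\,{\cal S}_\eps\to\psi(V_F)$. Choosing $\psi$ with $(\psi(V_F),\partial_v\psi(V_F))=(1,0)$ and then $(0,1)$ isolates the two limits in \eqref{eq-boundary-term}, with convergence understood weakly in $\tau$. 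Your proposed extraction of a uniform dissipation bound $\int aQ_\eps|\partial_v n_\eps|^2$ is neither needed nor obviously available (the diffusion degenerates as $Q_\eps\to 0$).
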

In particular, when $Q>0$ in the limit, \eqref{eq-boundary-term} gives the Dirichlet boundary condition for $n$ and the classical definition of the firing rate: 
\begin{equation}
    n(\tau,V_F)=0,\qquad aQ(\tau)\p_vn(\tau,V_F)=1.
\end{equation} 
\begin{proof}We first prove \eqref{eq:limit-S-cl}.
    The definition of $I_{cl}$ implies that $n(\tau,v)\equiv 0$ for $\tau\in I_{cl}$ and $v>V_F$. Therefore, for $\tau$ in the interior $\mathring{I}_{cl}$, we consider the weak formulation \eqref{WF:limit} with $\psi\in C_c^{\infty}(V_F,+\infty)$ such that $\psi\geq 0$ to obtain
    \begin{align}
            0=\frac{d}{d\tau}\int_{\mathbb{R}}\psi(v) n(\tau,v)dv
    = - \int_{\mathbb{R}} \psi(v) {\cal S}(\tau,v)dv\leq 0.
    \end{align}
Therefore the equality holds, which implies
\begin{equation}
    \int_{\mathbb{R}} \psi(v) {\cal S}(\tau,v)dv=0,
\end{equation} 
for all non-negative $\psi\in C_c^{\infty}((V_F,+\infty))$. Therefore we deduce that the support of ${\cal S}$ does not contain any $v>V_F$ for $\tau\in\mathring{I}_{cl}$. Combining this with \eqref{limit:3}, we deduce that
\begin{equation}
    \text{supp } {\cal S}(\tau,v) \cap (\mathring{I}_{cl}\times\R)\subseteq  \mathring{I}_{cl}\times \{V_F\}.
\end{equation} In other words, for $\tau\in\mathring{I}_{cl}$ the measure $ {\cal S}(\tau,v)$ is localized at $V_F$. Therefore, we recall from \eqref{limit:3} that for (almost) every $\tau$ $S(\tau,\cdot)$ is of mass one, to conclude
\begin{equation*}
    S(\tau,v)=\delta_{V_F},\qquad \tau\in\mathring{I}_{cl}.
\end{equation*} 

To prove \eqref{eq-boundary-term}, we first work with the random discharge problem \eqref{IFnew} with $\eps>0$. Multiply \eqref{IFnew} with a test function $\psi\in C^{\infty}_c(\R)$ and integrate on $[V_F,+\infty)$, and we obtain
\begin{align}
\frac{d}{d\tau}\int_{V_F}^{+\infty}n_{\eps}\psi dv+&\int_{V_F}^{+\infty}((-vQ_{\eps}+b)\p_v{\psi}+a\p_{vv}\psi)n_{\eps} dv=\\&a\p_v\psi(V_F)Q_{\eps}n_{\eps}(V_F)+\psi(V_F)(aQ_{\eps}\p_vn_{\eps}(V_F)+bn_{\eps}(V_F))-\int_{V_F}^{+\infty}\psi {\cal S}_{\eps}dv.
\end{align} For $\tau\in\mathring{I}_{cl}$, following the same procedure as in Section \ref{subsec:limit}, we see that the left hand side vanishes as $\eps\rightarrow0^+$. Choosing $\psi$ such that $\psi(V_F)=1$ and $\p_v\psi(V_F)=0$, we obtain using \eqref{eq:limit-S-cl}
\begin{equation}
   \lim_{\eps\rightarrow0^+}         \bigl(aQ_{\eps}(\tau)\p_vn_{\eps}(\tau,V_F)+bn_{\eps}(\tau,V_F)\bigr)=1.
\end{equation} Next, we choose $\psi$ such that $\psi(V_F)=0$ and $\p_v\psi(V_F)=1$ to get
\begin{equation}
    \lim_{\eps\rightarrow0^+}         Q_{\eps}(\tau)n_{\eps}(\tau,V_F)=0.
\end{equation}

% \lim_{\eps\rightarrow0^+}         Q_{\eps}(\tau)n_{\eps}(\tau,V_F)=0,\qquad 

    %n_{\eps}(V_F)[b\psi(V_F)+Q_{\eps}V_F\psi(V_F)-aQ_{\eps}\p_v\psi(V_F)]+\psi(V_F)a\p_vn_{\eps}(V_F)Q_{\eps}
\end{proof}

\subsubsection{Blow-up points}

For $\tau\in I_{bl}$ as defined in \eqref{def-Ibl}, we can directly compute the limits of $Q$ and ${\cal S}$ to find 
\begin{proposition}\label{prop:limit-Ibl}
    For $\tau\in I_{bl}$, we have
\begin{equation}
       Q(\tau)= \lim_{\eps\rightarrow0^+}Q_{\eps}(\tau)=0, \qquad
       S_{\eps}(\tau,\cdot) \rightharpoonup   {\cal S}(\tau,\cdot)=\frac{n(\tau,\cdot)\mathbb{I}_{ \, \cdot \, >V_F}}{\int_{V_F}^{+\infty}n(\tau,v)dv},\quad \text{weakly in } {\cal P}(\R).
\end{equation}
\end{proposition}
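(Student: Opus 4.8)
The plan is to pass to the limit directly in the two defining identities
$Q_\eps(\tau)=\big(\int_\R\phi_\eps n_\eps\big)^{-1}$ and
${\cal S}_\eps(\tau,v)=n_\eps(\tau,v)\mathbb{I}_{v\ge V_F}\big/\!\int_{V_F}^{+\infty}n_\eps(\tau,v)dv$, the crucial point being that on $I_{bl}$ the denominator stays bounded away from $0$. Write $M_\eps(\tau):=\int_{V_F}^{+\infty}n_\eps(\tau,v)dv$, so that $Q_\eps(\tau)M_\eps(\tau)=\eps$, and $M(\tau):=\int_{V_F}^{+\infty}n(\tau,v)dv$. Since $\mathbb{I}_{v>V_F}\in C_b+L^2$ and $n_\eps(\tau,\cdot),n(\tau,\cdot)\in L^2(\R)$ carry no atom (so the endpoint $v=V_F$ is immaterial), the strong-in-time convergence \eqref{limit:strong-1} gives $M_\eps\to M$ in $C([0,\tau_0])$.

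For the first assertion I would fix $\tau\in I_{bl}$ and use that $I_{bl}$ is open while $M$ is continuous and strictly positive on it: $M$ is then bounded below by some $c>0$ on a compact neighbourhood $K\subset I_{bl}$ of $\tau$, whence $M_\eps\ge c/2$ on $K$ for $\eps$ small by the uniform convergence, and therefore $Q_\eps=\eps/M_\eps\le 2\eps/c\to0$ uniformly on $K$. This already gives the pointwise limit $Q_\eps(\tau)\to0$. Testing against an arbitrary $\phi\in C_c(I_{bl})$ and using $Q_\eps\rightharpoonup Q$ from \eqref{limit:2-weak}, the uniform smallness of $Q_\eps$ on $\text{supp}\,\phi$ forces $\int\phi\,Q(d\tau)=0$, so $Q$ vanishes on $I_{bl}$; alternatively this last point is immediate from the complementary relation \eqref{eq:complementary-relation-repeat} since $M>0$ there.

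For the second assertion, fix $\tau\in I_{bl}$ and $\psi\in C_b(\R)$. I would first check that $\psi\,\mathbb{I}_{v\ge V_F}\in C_b+L^2$: choosing $\theta\in C^\infty(\R)$ with $0\le\theta\le1$, $\theta\equiv1$ on $[V_F,+\infty)$ and $\theta\equiv0$ on $(-\infty,V_F-1]$, one has $\psi\,\mathbb{I}_{v\ge V_F}=\psi\theta+\psi(\mathbb{I}_{v\ge V_F}-\theta)$ with $\psi\theta\in C_b$ and $\psi(\mathbb{I}_{v\ge V_F}-\theta)$ bounded and compactly supported, hence in $L^2$. Then \eqref{limit:strong-1} gives $\int_{V_F}^{+\infty}\psi\,n_\eps(\tau,\cdot)dv\to\int_{V_F}^{+\infty}\psi\,n(\tau,\cdot)dv$, and dividing by $M_\eps(\tau)\to M(\tau)>0$ yields
\[
\int_\R\psi(v)\,{\cal S}_\eps(\tau,v)\,dv=\frac{1}{M_\eps(\tau)}\int_{V_F}^{+\infty}\psi\,n_\eps(\tau,\cdot)dv\ \longrightarrow\ \frac{1}{M(\tau)}\int_{V_F}^{+\infty}\psi\,n(\tau,\cdot)dv .
\]
Since $\psi\in C_b$ is arbitrary and the limiting measure has total mass $M(\tau)/M(\tau)=1$ (no mass escapes to infinity), ${\cal S}_\eps(\tau,\cdot)$ converges weakly (narrowly) in ${\cal P}(\R)$ to $n(\tau,\cdot)\mathbb{I}_{\cdot>V_F}/M(\tau)$. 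Running the same computation against products $\phi(\tau)\psi(v)$ with $\phi\in C_c(I_{bl})$ — where $M$, hence eventually $M_\eps$, is bounded below on the compact set $\text{supp}\,\phi$, so the quotient converges uniformly there — identifies the time-slices of the product-space limit ${\cal S}$ of Section~\ref{subsec:limit} with $n(\tau,\cdot)\mathbb{I}_{\cdot>V_F}/M(\tau)$ for a.e.\ $\tau\in I_{bl}$.

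The main obstacle is minor but genuine: the natural test objects $\mathbb{I}_{v>V_F}$ and $\psi\,\mathbb{I}_{v>V_F}$ are neither continuous nor $L^2$, so one cannot invoke the strong-in-time convergence off the shelf; the remedy is the $C_b+L^2$ splitting above, which relies on the uniform $L^2$ bound of Prop.~\ref{prop:L2tau} to make the endpoint $v=V_F$ harmless. Beyond that the proof is bookkeeping, the two points to watch being that the positivity of $M$ must be used locally on compact subsets of the open set $I_{bl}$ rather than globally, and that the pointwise statement $Q_\eps(\tau)\to0$ and the measure statement that $Q$ vanishes on $I_{bl}$ need slightly different arguments.
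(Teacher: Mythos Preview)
Your argument is correct and follows the same route as the paper: pass to the limit in the defining formulas $Q_\eps=\eps/M_\eps$ and ${\cal S}_\eps=n_\eps\mathbb{I}_{\cdot>V_F}/M_\eps$, using the strong-in-time convergence \eqref{limit:strong-1} with $\psi=\mathbb{I}_{v>V_F}$ (and more generally $\psi\,\mathbb{I}_{v>V_F}$) together with the positivity of $M$ on $I_{bl}$. The paper states this in one sentence, whereas you have (correctly and usefully) supplied the $C_b+L^2$ splitting and the local-uniform lower bound on $M_\eps$, and also checked consistency with the product-space limit ${\cal S}$ of Section~\ref{subsec:limit}.
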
 Indeed, this follows directly from passing the limits in $Q_{\eps}(\tau)=\frac{\eps}{\int_{V_F}^{+\infty}n_{\eps}(\tau,v)dv}$ and ${\cal S}_{\eps}(\tau,\cdot)= \frac{n_{\eps}(\tau,\cdot)\mathbb{I}_{ \, \cdot \, >V_F}}{\int_{V_F}^{+\infty}n_{\eps}(\tau,v)dv}$, due to the strong limit \eqref{limit:strong-1} with $\psi(v)=\mathbb{I}_{v>V_F}$. The fact that $Q$ vanishes on $I_{bl}$ can also be derived directly from the complementary relation \eqref{eq:complementary-relation-repeat}.

\begin{remark}
In the limit $n(\tau,\cdot)$ is defined for every time and is continuous in time as in \eqref{limit:1}-\eqref{limit:strong-1}. For $Q$ and ${\cal S}$, by Prop.\ref{prop:limit-Ibl} they are continuous in time on $I_{bl}$. In general, we do not expect $Q$ and $\cal S$ to be continuous in time on $[0,+\infty)$. 
\end{remark}

Prop.~\ref{prop:limit-Ibl} provides a starting point to further investigate blow-ups next.

%----------------------------------------------------
\section{More properties of the solution}\label{sec:6}

\subsection{More on the blow-up intervals}

For the blow-up open set $I_{bl}$ as defined in \eqref{def-Ibl}, we first recall from Prop.~\ref{prop:limit-Ibl} that 
\begin{equation}\label{QS-blowup}
            Q(\tau)=0,\quad   \text{and}\quad {\cal S}(\tau,\cdot)=\frac{n(\tau,\cdot)\mathbb{I}_{\cdot>V_F}}{\int_{V_F}^{+\infty}n(\tau,v)dv},\qquad \forall \tau\in I_{bl},
\end{equation} 
Using that $Q\equiv0$, we simplify Eq.~\eqref{eq:Qnif} into \eqref{eq:Qnif-bl}
\begin{equation}\label{eq:Qnif-bl-Ibl}
         \frac{\partial n}{\partial \tau} +b\frac{\partial }{ \partial v} n = \delta_{V_R}(v)- {\cal S}(\tau,v) , \qquad \tau \in I_{bl}, \, v \in \R.
\end{equation}

 Consider now a maximal interval $(\tau_1,\tau_2)\subseteq I_{bl}$ with $\tau_2>\tau_1>0$. Recall the definition of $M(\tau)$ in~\eqref{map-tau-VF}. We have, by the continuity of $M(\cdot)$,
 \begin{equation}\label{M-value}
        M(\tau_1)=0,\qquad  M(\tau)>0,\quad  \tau\in(\tau_1,\tau_2),\qquad M(\tau_2)=0\quad \text{if $\tau_2<\infty$}.
\end{equation} 
Denote by $n_{pre}(v):=n(\tau_1,v)$ the pre-blow-up profile, which is supported in $(-\infty, V_F]$. Since the characteristics is moving rightwards ($b>0$),  solving \eqref{eq:Qnif-bl-Ibl} when $\tau\in (\tau_1,\tau_2)$, as a transport equation with sources, gives
\begin{equation} \label{n-blowup-full}
        n(\tau,v)=n_{pre}(v-b(\tau-\tau_1))+\frac{1}{b}\mathbb{I}_{V_R\leq v< V_R+b(\tau-\tau_1)}- \int_{\tau_1}^{\tau}{\cal S}(s,v-b(\tau-s))ds.
\end{equation}
Note that the last integral is well-defined as a $L^2$ function in $v$ thanks to the regularity of ${\cal S}$ in \eqref{QS-blowup}. Moreover, it vanishes for $v\leq V_F$ since ${\cal S}$ is supported on $[V_F,+\infty)$  and $b>0$. 

When $\tau_2<\infty$, $n(\tau_2,v)$ is also supported in $(-\infty, V_F]$, since  $M(\tau_2)=0$, and we infer the post-blow-up profile at $\tau=\tau_2$
\begin{equation}\label{post-profile}
        n_{post}(v):=n(\tau_2,v)=\left(n_{pre}(v-b(\tau_2-\tau_1))+\frac{1}{b}\mathbb{I}_{V_R\leq v< V_R+b(\tau_2-\tau_1)}\right)\mathbb{I}_{v\leq V_F},\qquad  v\in \R.
\end{equation}

%-------------------------------------
\begin{proposition}\label{prop:char-bl}
We have for $M(\tau)$ defined in \eqref{map-tau-VF}
\begin{equation}\label{dM-dtau}
        \frac{d}{d\tau}M(\tau)=bn(\tau,V_F)-1,\quad \tau\in I_{bl}.
\end{equation} 
Notice that $n(\tau,v)$ is not defined pointwise in $v$, but $n(\tau,V_F)$ is  defined a.e. in $\tau$ thanks to~\eqref{n-blowup-full}.

Furthermore, for $\tau\in(\tau_1,\tau_2)$, we can write $M(\tau)$ in terms of $n_{pre}$ via
\begin{equation}\label{expression-M}
        M(\tau) =\begin{dcases}
             \int_{V_F-b(\tau-\tau_1)}^{V_F}n_{pre}(v)dv-(\tau-\tau_1),\qquad 0\leq \tau-\tau_1\leq\frac{V_F-V_R}{b},\\
            \int_{V_F-b(\tau-\tau_1)}^{V_F}n_{pre}(v)dv-\frac{V_F-V_R}{b},\qquad \frac{V_F-V_R}{b}\leq \tau-\tau_1.\\
         \end{dcases}
\end{equation}
\end{proposition}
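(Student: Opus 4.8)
The plan is to obtain both identities directly from the explicit representation~\eqref{n-blowup-full}, working on a maximal subinterval $(\tau_1,\tau_2)\subseteq I_{bl}$; since $I_{bl}$ is open it is a countable disjoint union of such intervals, so this suffices. On such an interval $\mathrm{supp}\,n_{pre}\subseteq(-\infty,V_F]$ and \eqref{n-blowup-full} are at our disposal; the only place $\tau_1>0$ is used is to have $M(\tau_1)=0$, and for a component abutting $\tau=0$ one simply replaces $n_{pre}$ by $n^0$, without affecting what follows.

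To prove \eqref{expression-M}, I would integrate \eqref{n-blowup-full} over $v\in(V_F,+\infty)$ term by term. The transported pre-blow-up profile, after the change of variables $w=v-b(\tau-\tau_1)$ and using that $n_{pre}$ vanishes for $w>V_F$, contributes $\int_{V_F-b(\tau-\tau_1)}^{V_F}n_{pre}(w)\,dw$. The reset indicator contributes $\tfrac1b\bigl(V_R+b(\tau-\tau_1)-V_F\bigr)_+$, which, since $V_R<V_F$, is $0$ for $\tau-\tau_1\le\tfrac{V_F-V_R}{b}$ and equals $(\tau-\tau_1)-\tfrac{V_F-V_R}{b}$ otherwise — this is exactly what produces the two regimes. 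For the loss term, for each fixed $s\in(\tau_1,\tau)$ the rightward shift $b(\tau-s)\ge0$ keeps the support of ${\cal S}(s,\cdot)$ inside $[V_F,+\infty)$, so its $v$-integral over $(V_F,+\infty)$ equals the total mass $\int_\R{\cal S}(s,w)\,dw=1$; integrating in $s$ then gives $\tau-\tau_1$. Summing the three contributions yields \eqref{expression-M}.

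For the ODE~\eqref{dM-dtau}, I would differentiate \eqref{expression-M} in $\tau$. Since $n_{pre}\in L^1_{\mathrm{loc}}(\R)$ (indeed $n(\tau_1,\cdot)\in L^2$ by Prop.~\ref{prop:L2tau}), the Lebesgue differentiation theorem gives $\frac{d}{d\tau}\int_{V_F-b(\tau-\tau_1)}^{V_F}n_{pre}=b\,n_{pre}(V_F-b(\tau-\tau_1))$ for a.e.\ $\tau$, while the affine-in-$\tau$ piece contributes $0$ or $-1$ according to the regime. Independently, I would read off $n(\tau,V_F)$ from \eqref{n-blowup-full}: the last integral vanishes at $v=V_F$ because $\mathrm{supp}\,{\cal S}(s,\cdot)\subseteq[V_F,+\infty)$ and $b(\tau-s)\ge0$, and the indicator equals $\tfrac1b$ precisely when $\tau-\tau_1>\tfrac{V_F-V_R}{b}$; hence $n(\tau,V_F)=n_{pre}(V_F-b(\tau-\tau_1))+\tfrac1b\mathbb{I}_{\{\tau-\tau_1>(V_F-V_R)/b\}}$ for a.e.\ $\tau$. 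Substituting into $b\,n(\tau,V_F)-1$ and comparing with the derivative of \eqref{expression-M} in each regime gives \eqref{dM-dtau} on $(\tau_1,\tau_2)$, hence on $I_{bl}$.

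I expect the main obstacle to be not conceptual but a matter of handling the limited regularity with care: justifying the Lebesgue differentiation step, the a.e.-in-$\tau$ pointwise evaluation of $n(\tau,V_F)$ from \eqref{n-blowup-full}, and checking that the two piecewise branches match across the single threshold time $\tau-\tau_1=\tfrac{V_F-V_R}{b}$, which has measure zero. A more PDE-flavored alternative — integrating the transport equation~\eqref{eq:Qnif-bl-Ibl} directly over $v\in(V_F,+\infty)$, where $\delta_{V_R}$ drops out since $V_R<V_F$, the loss term integrates to $\int_\R{\cal S}=1$, and $n$ decays at $+\infty$, leaving $\frac{d}{d\tau}M=b\,n(\tau,V_F^+)-1$ — is available, but the boundary term $b\,n(\tau,V_F^+)$ is delicate because $n$ is only $L^2$ near $V_F$; the route through the explicit formula is therefore cleaner and at the same time makes transparent why $n(\tau,V_F)$ is well defined a.e.\ in $\tau$.
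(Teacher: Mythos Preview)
Your proposal is correct and follows essentially the same route as the paper: both start from the representation~\eqref{n-blowup-full}, integrate it over $(V_F,+\infty)$ term by term (the ${\cal S}$-contribution yielding $\tau-\tau_1$ exactly as you compute), and then link $M(\tau)$ to $n(\tau,V_F)$ via the transported profile $n_{pre}(V_F-b(\tau-\tau_1))$. The only cosmetic difference is the order: the paper first obtains the integrated form $M(\tau)=\int_{\tau_1}^{\tau}(bn(s,V_F)-1)\,ds$ by a change of variable and then reads off~\eqref{expression-M}, whereas you establish~\eqref{expression-M} first and differentiate. One small caveat: your aside about a component abutting $\tau=0$ is not quite right as stated, since the assumption~\eqref{as:ID} does not force $\mathrm{supp}\,n^0\subseteq(-\infty,V_F]$, so $M(0)$ need not vanish and \eqref{expression-M} would acquire an additive constant; the paper sidesteps this by explicitly taking $\tau_1>0$, and you should do the same.
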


\begin{remark}\label{rmk:tmp-5.7}

     Effectively the dynamics in $I_{bl}$ is characterized by the pair $(n(\tau,v)\mathbb{I}_{v\leq V_F},M(\tau))$. The details of how the mass is distributed in $v>V_F$, which depend on the specific form of ${\cal S}$, do not play a role in the effective dynamics. 

    Indeed, the proof of Proposition \ref{prop:char-bl} does not use the specific form of ${\cal S}$ given in \eqref{QS-blowup}, only that it is of mass $1$. In particular, if we use another absorption function $\phi_{\eps}(v)=\frac{1}{\eps}(v-V_F)_+$ instead of \eqref{eq:phi}, then    the proposition still holds with ${\cal S}$ given by
    \begin{equation}
        {\cal S}(\tau,\cdot)=\frac{(v-V_F)_+n(\tau,\cdot)}{\int_{V_F}^{+\infty}{(v-V_F)_+n(\tau,v)dv}}.
    \end{equation}
     In other words, in the regularized problem, the absorption profile of mass for  $v>V_F$ does not matter except : i) it has a constant rate $1$; ii) it keeps the non-negativity of $n$.
\end{remark}

\begin{proof} 
    Identity \eqref{dM-dtau} intuitively follows from integrating Eq.~\eqref{eq:Qnif-bl-Ibl} on $[V_F,+\infty)$, yet we do not know if $n(\tau,V_F)$ can be defined pointwise. It can be justified as $n(\tau,V_F)$ can be viewed as an~$L^1$ function in time thanks to \eqref{n-blowup-full}. More precisely, we integrate on $[V_F,+\infty)$ in \eqref{n-blowup-full} to obtain
\begin{align}
        M(\tau)&=\int_{V_F}^{+\infty}\left(n_{pre}(v-b(\tau-\tau_1))+\frac{1}{b}\mathbb{I}_{V_R\leq v< V_R+b(\tau-\tau_1)}- \int_{\tau_1}^{\tau}{\cal S}(s,v-b(\tau-s))ds\right)dv\\&=\int_{V_F}^{+\infty}\left(n_{pre}(v-b(\tau-\tau_1))+\frac{1}{b}\mathbb{I}_{V_R\leq v< V_R+b(\tau-\tau_1)}\right)dv- (\tau-\tau_1),\label{tmp-blowup}
\end{align} 
where we use, since $b>0$ and ${\cal S}(s,\cdot)$ is supported on $[V_F,+\infty)$,
\begin{equation}
        \int_{V_F}^{+\infty}\int_{\tau_1}^{\tau}{\cal S}(s,v-b(\tau-s))dsdv= \int_{\tau_1}^{\tau}\int_{V_F}^{+\infty}{\cal S}(s,v-b(\tau-s))dvds=  \int_{\tau_1}^{\tau}1ds=\tau-\tau_1 .
\end{equation} 
 As $M(\tau_1)=0$ we have $n_{pre}(v)=0$ for $v>V_F$, we may rewrite \eqref{tmp-blowup} as
\begin{equation}
        M(\tau)= \int_{V_F}^{V_F+b(\tau-\tau_1)}\left(n_{pre}(v-b(\tau-\tau_1))+\frac{1}{b}\mathbb{I}_{V_R\leq v< V_R+b(\tau-\tau_1)}\right)dv - (\tau-\tau_1).
\end{equation} 
Writing $v-b(s-\tau_1)=V_F-b(s-\frac{V_F-v}{b}-\tau_1)$ and using a change of variable, we arrive at
    \begin{equation}
        M(\tau)=\int_{\tau_1}^{\tau}\big(bn(s,V_F)-1\big)ds.
    \end{equation}

Finally, \eqref{expression-M} follows from rewriting the first integral in \eqref{tmp-blowup}, using
\begin{align*}
    \int_{V_F}^{+\infty}n_{pre}(v-b(\tau-\tau_1))dv&= \int_{V_F-b(\tau-\tau_1)}^{V_F}n_{pre}(v)dv,\\
    \int_{V_F}^{+\infty}\frac{1}{b}\mathbb{I}_{V_R\leq v< V_R+b(\tau-\tau_1)}dv&=\frac{1}{b}\int_{V_F}^{\max(V_F,V_R+b(\tau-\tau_1))}dv=\max(0,\tau-\tau_1-\frac{V_F-V_R}{b}).
\end{align*} 
Note that the term involving $\mathbb{I}_{V_R\leq v< V_R+b(\tau-\tau_1)}$, which comes from the reset $\delta_{V_R}(v)$, only takes effect when $\tau-\tau_1\geq \frac{V_F-V_R}{b}$, since it takes time to transport from $V_R$ to $V_F$ with velocity $b>0$.

\end{proof}

 Prop.~\ref{prop:char-bl} has some consequences on the blow-up interval $\tau_2-\tau_1$.

\begin{corollary}[Blow-up interval]\label{cor:tau2-tau1}
Let $(\tau_1,\tau_2)\subseteq I_{bl}$ be a connected component of $I_{bl}$ with $\tau_1>0$.
\\
(i) When $0<b<V_F-V_R$, then $\tau_2-\tau_1\leq 1<\frac{V_F-V_R}{b}$.\\ (ii) If $\tau_2<\infty$, then
\begin{equation}\label{eq:tau2-tau1}
        \tau_2-\tau_1=\int_{V_F-b(\tau_2-\tau_1)}^{V_F}n_{pre}(v)dv,
\end{equation} and $\tau_2$ is the infimum value of $\tau>\tau_1$ such that \eqref{eq:tau2-tau1} holds.
\\
(iii) 
    If $\tau_2-\tau_1> \frac{V_F-V_R}{b}$ (and thus $b\geq V_F-V_R$), then $\tau_2=+\infty$.
\\
(iv) Denoting by  $n_{pre}(v):=n(\tau_1,v)$ the pre-blow-up profile, then
\begin{equation}\label{eq:nprevvf}
   \liminf_{\delta\rightarrow0^+}\frac{b\int_{V_F-\delta}^{V_F}n_{pre}(v)dv}{\delta}\geq 1.
\end{equation} 
\end{corollary}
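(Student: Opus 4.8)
The plan is to read off all four assertions directly from the explicit formula \eqref{expression-M} for $M(\tau)$ on a maximal blow-up component $(\tau_1,\tau_2)$, combined with the elementary facts recorded in \eqref{M-value}: $M\geq 0$ everywhere, $M>0$ on $(\tau_1,\tau_2)$, $M(\tau_1)=0$, and $M(\tau_2)=0$ when $\tau_2<\infty$, together with $n_{pre}\geq 0$ and $\int_\R n_{pre}=1$ (the latter since $n(\tau_1,\cdot)\in\mathcal P(\R)$). It is convenient to set $F(\tau):=\int_{V_F-b(\tau-\tau_1)}^{V_F}n_{pre}(v)\,dv$, which is nonnegative, nondecreasing in $\tau$ (the interval grows as $b>0$), and bounded by $1$; \eqref{expression-M} then says $M(\tau)=F(\tau)-(\tau-\tau_1)$ for $\tau-\tau_1\leq\frac{V_F-V_R}{b}$ and $M(\tau)=F(\tau)-\frac{V_F-V_R}{b}$ for $\tau-\tau_1\geq\frac{V_F-V_R}{b}$.

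I would first prove (iii). Assuming $\tau_2-\tau_1>\frac{V_F-V_R}{b}$, set $\tau^*:=\tau_1+\frac{V_F-V_R}{b}\in(\tau_1,\tau_2)$; from $M(\tau^*)>0$ and $M(\tau^*)=F(\tau^*)-\frac{V_F-V_R}{b}$ we get $F(\tau^*)>\frac{V_F-V_R}{b}$, hence by monotonicity of $F$, $M(\tau)=F(\tau)-\frac{V_F-V_R}{b}>0$ for every $\tau>\tau^*$, so $(\tau^*,\infty)\subseteq I_{bl}$ and $\tau_2=+\infty$. As a byproduct, if $\tau_2<\infty$ then $\tau_2-\tau_1\leq\frac{V_F-V_R}{b}$, so $\tau_2$ lies in the first regime and $M(\tau_2)=0$ reads $\tau_2-\tau_1=F(\tau_2)$, which is \eqref{eq:tau2-tau1}; since $M(\tau)=F(\tau)-(\tau-\tau_1)>0$ throughout $(\tau_1,\tau_2)$, no earlier $\tau>\tau_1$ satisfies $F(\tau)=\tau-\tau_1$, so $\tau_2$ is the infimum of such $\tau$, giving (ii). For (i), when $0<b<V_F-V_R$ we have $\frac{V_F-V_R}{b}>1$; no $\tau\in(\tau_1,\tau_2)$ can satisfy $\tau-\tau_1>\frac{V_F-V_R}{b}$, since the second-regime formula would force $M(\tau)\leq 1-\frac{V_F-V_R}{b}<0$, so the whole interval is in the first regime where $M(\tau)=F(\tau)-(\tau-\tau_1)\leq 1-(\tau-\tau_1)$; positivity of $M$ on $(\tau_1,\tau_2)$ then gives $\tau-\tau_1<1$, and letting $\tau\uparrow\tau_2$ yields $\tau_2-\tau_1\leq 1<\frac{V_F-V_R}{b}$. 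Finally for (iv), for any small $\delta\in(0,V_F-V_R)$ with $\delta/b<\tau_2-\tau_1$ (possible since $\tau_2>\tau_1$), the time $\tau:=\tau_1+\delta/b$ lies in $(\tau_1,\tau_2)$ and in the first regime, so $0<M(\tau)=\int_{V_F-\delta}^{V_F}n_{pre}-\delta/b$; dividing by $\delta/b$ and taking $\liminf_{\delta\to0^+}$ gives \eqref{eq:nprevvf}.

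I expect no serious obstacle here: once Proposition \ref{prop:char-bl} is in hand the corollary is essentially a bookkeeping exercise. The one point that requires a little care is handling the piecewise definition \eqref{expression-M} and the possibility $\tau_2=+\infty$ consistently — concretely, the observation used in (iii) that once $F(\tau)$ exceeds $\frac{V_F-V_R}{b}$ the function $M$ becomes nondecreasing and can never return to $0$, which is what forces $\tau_2=\infty$ in that regime and, by contraposition, pins a finite blow-up interval inside the first regime so that \eqref{eq:tau2-tau1} applies.
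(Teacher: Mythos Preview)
Your proof is correct and follows essentially the same approach as the paper: both read the four assertions directly off the explicit formula \eqref{expression-M} combined with \eqref{M-value}, using the monotonicity of $F(\tau)$ and the bound $F\le 1$. The only cosmetic differences are the order (you prove (iii) before (i) and (ii), whereas the paper does (i) first) and that the paper phrases (iii) as a proof by contradiction on $\tau_2<\infty$ rather than directly asserting $(\tau^*,\infty)\subseteq I_{bl}$; since \eqref{expression-M} is stated only for $\tau\in(\tau_1,\tau_2)$, your phrasing is slightly loose there, but the content is identical once tightened to a contradiction argument via continuity of $M$.
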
 

 As the absorption rate by ${\cal S}$ is one, the duration $\tau_2-\tau_1$ physically quantifies the number of neurons which spike during the blow-up interval. They spike simultaneously at a single blow-up time in the original timescale $t$. Eq.~\eqref{eq:tau2-tau1} gives a characterization of the number in terms of $n_{pre}$, see also similar formulas in \cite{zhang2014distribution,delarue2015particle,dou2022dilating,taillefumier2022characterization}. Eq.~\eqref{eq:nprevvf} implies that the Dirichlet boundary condition at $V_F$ is lost at the blow-up time, which is also known in literature e.g. \cite{hambly2019mckean,taillefumier2022characterization}.

\begin{proof} We begin with (i). When $0<b<V_F-V_R$, if $1<\tau_2-\tau_1$, then we can choose $\tau\in(\tau_1,\tau_2)$ such that $1<\tau-\tau_1<\frac{V_F-V_R}{b}$. By \eqref{expression-M} we have
\begin{align}
        0<M(\tau)=\int_{V_F-b(\tau-\tau_1)}^{V_F}n_{pre}(v)dv-(\tau-\tau_1)\leq 1-(\tau-\tau_1)<0,
\end{align} which is a contradiction.

To prove (iii), when $\tau_2-\tau_1> \frac{V_F-V_R}{b}$ (by the first statement,  this happens only when $b\geq V_F-V_R$), if $\tau_2<\infty$, then we note that both $\tau_2$ and $\tau_1+\frac{V_F-V_R}{b}$ are in the second regime in  \eqref{expression-M}. However, we note that $M(\tau)$ is non-decreasing in $\tau$ in that regime, which implies 
\begin{equation}
        0=M(\tau_2)\geq M\left(\tau_1+\frac{V_F-V_R}{b}\right)>0,
\end{equation} which is a contradiction. For the last inequality, we use \eqref{M-value} and $\tau_1+\frac{V_F-V_R}{b}\in(\tau_1,\tau_2)$.

Next, if $\tau_2<\infty$ then by (iii) we have $\tau_2-\tau_1\leq\frac{V_F-V_R}{b}$. In view of \eqref{expression-M}, (ii) s because $M(\tau_2)=0$ and $M(\tau)>0$ for $\tau\in(\tau_1,\tau_2)$.

Finally, the formula \eqref{eq:nprevvf} is a direct consequence of $M(\tau_1+\delta)>0$ for small $\delta>0$, and the first expression in \eqref{expression-M}.

\end{proof} 

 The critical value $b=V_F-V_R$ is related to the lifespan in $t$ of the solution, which we discuss more below.

%----------------------------------
\subsection{Lifespan in $t$-timescale}
\label{sec:back-t}
%----------------------------------

Theorem~\ref{th:main} 
 provides a global solution in $\tau$ timescale, but it may not be global in $t$ timescale. Using the change of time \eqref{def:tau} the lifespan in $t$ is obtained as 
\begin{equation}\label{def-lifespan}
T^*=\int_0^{+\infty}Q(\tau)d\tau.
\end{equation} Then the solution in $t$ is global if and only if $T^*=+\infty$.

%Similar to the main theorem in

\begin{theorem}[Lifespan in $t$ timescale]\label{th:lifespan} With $T^*$ defined in \eqref{def-lifespan},
\\ (i) when $0<b<V_F-V_R$, we have $T^*=+\infty$,
\\
(ii) when $b\geq V_F-V_R$, there exist examples with finite $T^*$ (we may even have  $T^*=0$).
\end{theorem}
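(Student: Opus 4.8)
Suppose, for contradiction, that $T^{*}=\int_{0}^{\infty}Q(\tau)\,d\tau<\infty$. The mechanism is the first moment identity obtained by inserting $\psi(v)=v$ into the weak formulation \eqref{WF:limit} (the truncation $v\mapsto v\mathbb{I}_{\{|v|\le R\}}$, which lies in $L^{2}$, together with the uniform bound $\int v^{2}n(\tau)\le C(\tau_{0})$ makes this rigorous and also shows $\mu(\tau):=\int_{\R}v\,n(\tau,v)\,dv$ is continuous). Using $a\psi''-v\psi'=-v$, one gets, in the Stieltjes sense associated with the measure $Q$,
\[
d\mu(\tau)=\bigl(b+V_{R}-\textstyle\int_{\R}v\,\mathcal{S}(\tau,dv)\bigr)\,d\tau-\mu(\tau)\,Q(d\tau).
\]
Since $\mathcal{S}(\tau,\cdot)$ is a probability measure supported in $[V_{F},+\infty)$ we have $\int v\,\mathcal{S}(\tau,dv)\ge V_{F}$, and because $b<V_{F}-V_{R}$ the constant $c_{0}:=V_{F}-V_{R}-b$ is strictly positive, so $d\mu\le-c_{0}\,d\tau-\mu\,Q(d\tau)$. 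With the integrating factor $E(\tau)=\exp\!\bigl(\int_{0}^{\tau}Q(ds)\bigr)$, which satisfies $1\le E(\tau)\le e^{T^{*}}$ and $dE=E\,Q(d\tau)$, this becomes $d(E\mu)\le-c_{0}E\,d\tau$, hence $E(\tau)\mu(\tau)\le\mu(0)-c_{0}\int_{0}^{\tau}E\le\mu(0)-c_{0}\tau$. Consequently, if $\mu(\tau)\ge-K$ for a constant $K$ independent of $\tau$, then $-Ke^{T^{*}}\le E(\tau)\mu(\tau)\le\mu(0)-c_{0}\tau$ for all $\tau$, which is impossible as $\tau\to\infty$. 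This contradiction gives $T^{*}=+\infty$.

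\textbf{The main obstacle in (i)} is precisely the uniform lower bound $\mu(\tau)\ge-K$, i.e.\ ruling out that mass escapes towards $v=-\infty$. Under the standing hypothesis $T^{*}<\infty$ this should hold because the total amount of relaxation and of diffusion is finite: returning to the $t$-timescale on $[0,T^{*})$, the characteristics of the drift $-v+bN_{\eps}$ obey $v(t)\ge v(0)e^{-t}\ge v(0)e^{-T^{*}}$ (the $b$-term being nonnegative), the reset injects mass at the fixed point $V_{R}$, and the diffusion of constant coefficient $a$ spreads mass over a distance $O(\sqrt{aT^{*}})$; combining this with the tail control from $\int v^{2}n^{0}<\infty$ yields $\mu(\tau)\ge-K(T^{*},a,n^{0})$. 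Making this last step rigorous — via a suitable test function such as a smoothed $(-v)_{+}$, or via the stochastic (Ornstein--Uhlenbeck) representation already used for Lemma~\ref{lem:OUbis} — is where genuine care is needed.

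\textbf{Plan for part (ii).} Here it suffices to exhibit explicit solutions with $T^{*}=0$. Looking for a stationary solution with $Q\equiv0$, equation \eqref{eq:Qnif} reduces to $b\,\partial_{v}n=\delta_{V_{R}}-\mathcal{S}$; integrating and imposing $n\in L^{1}$ forces $n\equiv0$ on $(-\infty,V_{R})$ and $n\equiv1/b$ on $(V_{R},V_{F})$. When $b=V_{F}-V_{R}$ this segment already carries mass $1$, so $M=\int_{V_{F}}^{\infty}n=0$, $\mathcal{S}=\delta_{V_{F}}$, and
\[
n(v)=\tfrac{1}{b}\,\mathbb{I}_{(V_{R},V_{F})}(v)
\]
is a stationary weak solution of \eqref{eq:Qnif}--\eqref{eq:Qbry} (this is the profile mentioned in the remark after \eqref{def-Icl}, and one checks \eqref{as:ID} and all relations \eqref{eq:Qbry} directly). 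When $b>V_{F}-V_{R}$ the leftover mass $M:=1-(V_{F}-V_{R})/b>0$ must sit above $V_{F}$; solving $b\,\partial_{v}n=-\mathcal{S}=-n/M$ on $(V_{F},\infty)$ and matching the mass constraint gives the continuous profile
\[
n(v)=\tfrac{1}{b}\,\mathbb{I}_{(V_{R},V_{F})}(v)+\tfrac{1}{b}\,e^{-(v-V_{F})/(bM)}\,\mathbb{I}_{(V_{F},\infty)}(v),\qquad M=1-\tfrac{V_{F}-V_{R}}{b},
\]
with $\mathcal{S}=n\,\mathbb{I}_{\{v>V_{F}\}}/M$; its exponential tail gives finite $L^{2}$ norm and second moment, so \eqref{as:ID} holds, and \eqref{eq:Qbry} is verified since $M(\tau)\equiv M>0$ and $Q\equiv0$. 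In both cases $Q\equiv0$, hence $T^{*}=\int_{0}^{\infty}Q=0$. To see these are genuinely solutions in the sense of Theorem~\ref{th:main}, note that $(n,0,\mathcal{S})$ is itself a global weak solution with the required bounds; equivalently, starting the regularized system \eqref{IFnew} from $n^{0}=n$, the compactness of Section~\ref{sec:limit} produces a limit which must coincide with this stationary profile. Finally, examples with $0<T^{*}<\infty$ are obtained by choosing an initial datum that evolves classically (so $Q>0$, accumulating positive $t$-time) on a bounded interval $(0,\tau_{0})$ and coincides at $\tau_{0}$ with one of the above permanent–blow-up profiles, after which $Q$ vanishes identically, giving $T^{*}=\int_{0}^{\tau_{0}}Q(\tau)\,d\tau\in(0,\infty)$; the only delicate point there is the matching at $\tau_{0}$, which one may arrange using the flexibility in choosing $n^{0}$.
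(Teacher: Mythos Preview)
Your part (ii) is correct and essentially identical to the paper's construction: the stationary profiles $n(v)=\tfrac{1}{b}\mathbb{I}_{V_R\le v\le V_F}$ together with, when $b>V_F-V_R$, the exponential tail $\tfrac{1}{b}e^{-(v-V_F)/(bM)}$ for $v>V_F$, $M=1-(V_F-V_R)/b$, and $Q\equiv0$, give $T^*=0$. The paper presents exactly this family (and notes the connection to ``plateau'' / ``eternal blow-up'' solutions).

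For part (i), your mechanism is the right one --- the inequality $V_R-\int v\,\mathcal{S}(\tau,dv)\le V_R-V_F$ combined with the $b$-term produces a strictly negative drift $-(V_F-V_R-b)$ --- but your choice of test function $\psi(v)=v$ is precisely what creates the gap you flag. The paper avoids the issue altogether by choosing a \emph{bounded} test function: take $\psi\in C^2(\R)$ nondecreasing with $0\le\partial_v\psi\le1$, $\partial_v\psi\equiv1$ on $[V_R,V_F]$, and $\partial_v\psi\equiv0$ outside $[V_R-1,V_F+1]$. One still gets $\psi(V_R)-\int\psi\,\mathcal{S}\le\psi(V_R)-\psi(V_F)=-(V_F-V_R)$ and $\int b\,\partial_v\psi\,n\le b$; moreover $\partial_v\psi$ has compact support, so $a\partial_{vv}\psi-v\partial_v\psi$ is bounded and the $Q$-term contributes at most $CQ(\tau)$. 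The weak formulation \eqref{WF:limit} then reads
\[
\frac{d}{d\tau}\int_\R\psi\,n(\tau,v)\,dv\le b-(V_F-V_R)+CQ(\tau),
\]
and since $\bigl|\int\psi\,n\bigr|\le\|\psi\|_\infty$ trivially (because $n$ is a probability), integrating in $\tau$ gives $C\int_0^\tau Q(s)\,ds\ge(V_F-V_R-b)\tau-C\to+\infty$. No contradiction hypothesis, no integrating factor, and no separate lower bound on $\mu$ are needed. Your proposed repair via a smoothed $(-v)_+$ is in the same spirit and can be made to work, but it is a detour: the clean route is simply to take $\psi$ bounded from the outset, which makes the required a~priori bound on $\int\psi\,n$ automatic.
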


In other words, for $0<b<V_F-V_R$ the solution is always global in $t$, but for $b\geq V_F-V_R$ we have counter-examples. {We note that the definition of lifespan \eqref{def-lifespan} and this dichotomy between values of $b$ resemble those in \cite{dou2022dilating}.}
\begin{proof}
    To prove (i) we use the weak formulation \eqref{WF:limit}. Take a test function $\psi\in C^2(\R)$ such that $0\leq \p_v\psi\leq 1$ 
 for all $v\in \R$ satisfying
    \begin{equation}
\psi\equiv0,\quad v\leq V_R-1,\qquad\p_v\psi \equiv1,\quad v\in[V_R,V_F],\qquad\p_v\psi\equiv0,\quad v\geq V_F+1.\end{equation} By construction $\psi$ is bounded and non-decreasing. The upper bound on $\p_v\psi$ implies
    \begin{equation}
\int_{\mathbb{R}}b\p_{v}\psi(v)n(\tau,v)dv \leq  \int_{\mathbb{R}}bn(\tau,v)dv =b.
\end{equation} Since ${\cal S}(\tau,\cdot)$ is supported on $[V_F,+\infty)$ we compute
\begin{align}
       \psi(V_R)-\int_{\mathbb{R}} \psi(v) {\cal S}(\tau,v)dv\leq \psi(V_R)-\psi(V_F)=-(V_F-V_R). 
\end{align} Note also $\p_v \psi$ is a compactly supported $C^1$ function, which implies $a\p_{vv}\psi-v\p_v\psi\leq C$. Thus we have
\begin{equation}
Q(\tau)\int_{\mathbb{R}}\bigl(a\p_{vv}\psi-v\p_v\psi \bigr)n(\tau,v)dv \leq  CQ(\tau)\int_{\mathbb{R}}n(\tau,v)dv=CQ(\tau).
\end{equation} All together we derive in \eqref{WF:limit}
\begin{equation}
    \frac{d}{d\tau}\int_{\mathbb{R}}\psi(v) n(\tau,v)dv\leq  b-(V_F-V_R)+ CQ(\tau).
\end{equation} Integrating in time, we obtain
\begin{align}
C\int_0^{\tau}Q(s)ds&\geq (V_F-V_R-b)\tau+\int_{\mathbb{R}}\psi(v) n(\tau,v)dv-\int_{\mathbb{R}}\psi(v) n(0,v)dv\\ &\geq (V_F-V_R-b)\tau-C,
\end{align} which goes to infinity as $\tau$ goes to infinity, provided that $b<V_F-V_R$. This proves $T^*=+\infty$.

For (ii) when $b\geq V_F-V_R$, one can immediately  check that the following is a steady state of Eq.~\eqref{eq:Qnif-bl-Ibl} 
\begin{equation}\label{steady-state}
n(v)=\begin{dcases}
      \frac{1}{b}\mathbb{I}_{V_R\leq v\leq V_F},\quad v\leq V_F,\\
      \frac{1}{b}e^{-\frac{1}{bM}(v-V_F)},\quad v>V_F,
    \end{dcases}
\end{equation} with $M=1-\frac{V_F-V_R}{b}>0$ when $b>V_F-V_R$. When $b=V_F-V_R$ we have $M=0$ and the above is understood as $n(v)\equiv0$ for $v>V_F$. 

A more general class of solutions of Eq.~\eqref{eq:Qnif} with $Q\equiv0$ is to take \begin{equation}
    \begin{dcases}
    n(\tau,v)=\frac{1}{b}\mathbb{I}_{V_R\leq v\leq V_F},\qquad \tau\geq0,\, v\leq V_F,\\ M(\tau)=1-\frac{V_F-V_R}{b}\geq0,\qquad \tau\geq0.
        \end{dcases}
\end{equation} Here the profile of $n(\tau,v)$ for $v>V_F$ can be time-dependent and in Eq.~\eqref{eq:Qnif-bl},  ${\cal S}(\tau,v)$ adapts to satisfy the equality for $M(\tau)$, see also Remark.~\ref{rmk:tmp-5.7}. For those examples $T^*=0$.

%    \frac{d}{d\tau}\int_{\mathbb{R}}\psi(v) n(\tau,v)dv=&\int_{\mathbb{R}}b\p_{v}\psi(v)n(\tau,v)dv +\psi(V_R) \notag\\ &+ Q(\tau)\int_{\mathbb{R}}\bigl(a\p_{vv}\psi-v\p_v\psi \bigr)n(\tau,v)dv - \int_{\mathbb{R}} \psi(v) {\cal S}(\tau,v)dv.

\end{proof}

The examples constructed for Theorem~\ref{th:lifespan}-(ii) are related to the ``plateau solutions'' studied in \cite{CiCP-30-820}, and are called ``eternal blow-up'' in \cite{dou2022dilating}. It means every neuron spikes for infinite times at a single time in $t$ timescale, which is unrealistic as it is incompatible with the refractory state.

\section{Conclusions and discussion}\label{sec:7}

To understand the dynamics after blow-up in the integrate-and-fire models for neural assemblies, we study the random discharge model as a regularized problem. Using the dilated timescale $\tau$, we are able to obtain new estimates. Those are fundamental to pass to the strong absorption limit $\eps\rightarrow0^+$ in the nonlinear terms. A global limit equation is derived where the Dirichlet boundary condition is relaxed by a measure ${\cal S}$ which is a Lagrange multiplier to keep $n$ as a probability distribution. As consequences, we obtain different information on the blow-up depending on the critical parameter~$\frac{V_F-V_R}{b}$.

Several questions remain open. Mathematically, we know little about the global behavior of the blow-up and classical sets $I_{bl}$ and $I_{cl}$, though we characterize the dynamics locally. For instance, we don't know if $I_{bl}$ can have infinite number of connected components on a finite interval. These could be potential difficulties towards the uniqueness of the limit solution. See also \cite{delarue2022global,LS_2020,sadun2022global} for results about related models.

From a physical point of view, our model is highly simplified. For example, among all missing physical mechanisms, the refractory period might account for the pathological case when $T^*=0$ in Theorem \ref{th:lifespan}. 

Also, it will be interesting to look into the connections between various continuations after blow-up, for instance those obtained from the finite network of neurons \cite{delarue2015global} and for the case with activity-dependent noise \cite{dou2022dilating,sadun2022global,taillefumier2022characterization}.

%--------------------------
\appendix

\section{A Probabilistic Proof of Lemma \ref{lem:OUbis}}\label{sec:OU-prob-proof}

Here we give a probabilistic proof of Lemma \ref{lem:OUbis}, using the following inhomogenous OU process
\begin{equation}\label{SDE-tildeX}
    \begin{cases}
        d\tilde{X}_{\eps,t}=(-X_t+bN_{\eps}(t))dt+\sqrt{2a}dB_t\quad t>0,\\
        \tilde{X}_{\eps,T=0}=X_0,\\
    \end{cases}
\end{equation} whose Fokker-Planck equation is \eqref{FP-OU-recall}. That is, if initially the distribution of $X_0$ is given by $n^0(v)$, then the evolution of the probability density of $\tilde{X}_{\eps,t}$ is governed by \eqref{FP-OU-recall}. In this way Lemma \ref{lem:OUbis} is reformulated as

\begin{lemma}\label{lem:OU}
    Let $\tilde{X}_{t,\eps}$ as defined in \eqref{SDE-tildeX} with $\mathbb{E}X_0^2\leq C_0$, then with some constant $C$ independent of $N_{\eps}\geq0$, we have $\mathbb{P}(\tilde{X}_{\eps,t}\geq V_F) \geq e^{-C/t}$ for $0<t<\frac{1}{2}$.
\end{lemma}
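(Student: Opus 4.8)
The plan is to solve the linear stochastic differential equation \eqref{SDE-tildeX} explicitly and reduce the claim to a one-dimensional Gaussian tail estimate, using the sign $bN_{\eps}\geq 0$ to make all bounds independent of $N_{\eps}$. Writing the It\^{o} solution,
\begin{equation*}
\tilde{X}_{\eps,t}=e^{-t}X_0+b\int_0^t e^{s-t}N_{\eps}(s)\,ds+\sqrt{2a}\int_0^t e^{s-t}\,dB_s ,
\end{equation*}
the drift integral is nonnegative, so pointwise $\tilde{X}_{\eps,t}\geq e^{-t}X_0+G_t$ with $G_t:=\sqrt{2a}\int_0^t e^{s-t}\,dB_s$ a centered Gaussian of variance $\sigma_t^2=a(1-e^{-2t})$, independent of $X_0$. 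Hence $\mathbb{P}(\tilde{X}_{\eps,t}\geq V_F)\geq \mathbb{P}(e^{-t}X_0+G_t\geq V_F)$, and the right-hand side no longer involves $N_{\eps}$; this comparison is the conceptual heart of the argument.

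To handle the random initial datum I would truncate. Choose $A:=\max\{\sqrt{2C_0},\,|V_F|+2\}$; then Chebyshev's inequality and $\mathbb{E}X_0^2\leq C_0$ give $\mathbb{P}(X_0\geq -A)\geq 1-C_0/A^2\geq \tfrac12$. On $\{X_0\geq -A\}$ one has $e^{-t}X_0\geq -A$, so $\{G_t\geq V_F+A\}\cap\{X_0\geq -A\}\subseteq\{e^{-t}X_0+G_t\geq V_F\}$, and by the independence of $G_t$ and $X_0$,
\begin{equation*}
\mathbb{P}(e^{-t}X_0+G_t\geq V_F)\geq \mathbb{P}(X_0\geq -A)\,\mathbb{P}(G_t\geq V_F+A)\geq \tfrac12\,\mathbb{P}(G_t\geq V_F+A).
\end{equation*}

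Then I would bound the Gaussian tail from below. For $0<t<\tfrac12$ one has $1-e^{-2t}\geq 2t\,e^{-2t}\geq 2t/e$, so $\sigma_t^2\geq 2at/e$, while $V_F+A\geq 2>0$ by the choice of $A$. Writing $G_t=\sigma_t Z$ with $Z$ a standard Gaussian and $x:=(V_F+A)\sqrt{e/(2at)}>0$,
\begin{equation*}
\mathbb{P}(G_t\geq V_F+A)=\mathbb{P}\Big(Z\geq \tfrac{V_F+A}{\sigma_t}\Big)\geq \mathbb{P}(Z\geq x)\geq \tfrac{1}{\sqrt{2\pi}}\int_x^{x+1}e^{-u^2/2}\,du\geq \tfrac{1}{\sqrt{2\pi}}\,e^{-(x+1)^2/2}.
\end{equation*}
Since $t<\tfrac12$ gives $1/\sqrt t\leq 1/t$ and $1\leq 2/t$, we get $(x+1)^2\leq C'/t$ for a constant $C'=C'(a,V_F,C_0)$; absorbing the multiplicative constant $\tfrac12\cdot\tfrac1{\sqrt{2\pi}}$ into the exponential (legitimate since $1/t>2$ on this range) yields $\mathbb{P}(\tilde{X}_{\eps,t}\geq V_F)\geq e^{-C/t}$ with $C=C(a,V_F,C_0)$, in particular independent of $N_{\eps}$, which is the claim. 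Translating back via the Fokker–Planck correspondence recovers Lemma~\ref{lem:OUbis}.

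The main obstacle is not conceptual but a matter of careful bookkeeping in this last step: one must check that the exponent scales like $1/t$ rather than $1/t^2$, and correctly dispose of the edge cases (a possibly negative $V_F$, the small-$t$ asymptotics of $\sigma_t^2$, and the conversion of the prefactors into the $e^{-C/t}$ form). Everything else is the explicit Ornstein–Uhlenbeck formula together with elementary Gaussian estimates.
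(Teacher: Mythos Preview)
Your proof is correct and follows essentially the same approach as the paper's probabilistic proof: solve the OU SDE explicitly, drop the nonnegative drift $bN_\eps$, truncate $X_0$ via Chebyshev, use independence of $G_t$ and $X_0$, and finish with a lower Gaussian tail bound. The only cosmetic differences are your choice of tail estimate (integrating over $[x,x+1]$ versus the paper's $\Psi(\lambda)\geq e^{-2\lambda^2}$) and the specific constants.
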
 

\begin{proof}

First, by Chebyshev's inequality we have for $M>0$
\begin{equation}
    \mathbb{P}(\tilde{X}_{\eps,0}\leq -M) \leq M^{-2}\mathbb{E}X_{\eps,0}^2=M^{-2}\mathbb{E}X_{0}^2.
\end{equation} Therefore we can take $M$ large enough such that (for later purpose we also take $M>100\sqrt{a}$)
\begin{equation}
    \mathbb{P}(\tilde{X}_{\eps,0}> -M) =1- \mathbb{P}(\tilde{X}_{\eps,0}\leq -M)\geq 1-M^{-2}\mathbb{E}X_{0}^2>\frac{1}{2}.
\end{equation} 

Treating $bN_{\eps}(t)\geq0$ as external input, the SDE \eqref{SDE-tildeX} can by solved as
\begin{equation}\label{sde-formula}
    \tilde{X}_{\eps,t}=e^{-t}\tilde{X}_{\eps,0}+\int_0^{t}be^{s-t}N_{\eps}(s)ds+\sqrt{2a}\int_0^{t}e^{s-t}dB_s.
\end{equation}
When $\tilde{X}_{\eps,0}\geq -M$, we derive using $bN_{\eps}(t)\geq0$
\begin{align}
    \tilde{X}_{\eps,t}\geq -e^{-t}M+\sqrt{2a}\int_0^{t}e^{s-t}dB_s\geq -M+\sqrt{2a}\int_0^{t}e^{s-t}dB_s.
\end{align}
Therefore we estimate the probability
\begin{align} 
\mathbb{P}(\tilde{X}_{\eps,t}\geq V_F) &\geq \mathbb{P}(\tilde{X}_{\eps,t}\geq V_F, \tilde{X}_{\eps,0}\geq-M)\\&\geq \mathbb{P}(-M+\sqrt{2a}\int_0^{t}e^{s-t}dB_s\geq V_F, \tilde{X}_{\eps,0}\geq-M)\\&=  \mathbb{P}(\sqrt{2a}\int_0^{t}e^{s-t}dB_s\geq V_F+M, \tilde{X}_{\eps,0}\geq-M).
\end{align} As the Brownian motion is independent of initial data, we derive
\begin{align}
    \mathbb{P}(\tilde{X}_{\eps,t}\geq V_F) &\geq \mathbb{P}(\sqrt{2a}\int_0^{t}e^{s-t}dB_s\geq V_F+M, \tilde{X}_{\eps,0}\geq-M)\\&= \mathbb{P}(\sqrt{2a}\int_0^{t}e^{s-t}dB_s\geq V_F+M)\mathbb{P}( \tilde{X}_{\eps,0}\geq-M)\\&\geq \frac{1}{2} \mathbb{P}(\sqrt{2a}\int_0^{t}e^{s-t}dB_s\geq V_F+M),
\end{align} 
where in the last step we use the choice of $M$. Finally, we note that $\sqrt{2a}\int_0^{t}e^{s-t}dB_s$ is a Gaussian random variable with zero mean and variance
\begin{equation}\label{pf-lem-tmp-1}
    \sigma^2(t):={2a}\int_0^{t}e^{2(s-t)}ds\geq \frac{1}{2}at, \quad \text{for} \quad 0<t<\frac{1}{2}.
\end{equation} 
 Therefore, we obtain
\begin{align}\label{pf-lem-tmp-2}
   \mathbb{P}(\tilde{X}_{\eps,t}\geq V_F) \geq \mathbb{P}(\sqrt{2a}\int_0^{t}e^{s-t}dB_s\geq V_F+M)=\Psi(\frac{V_F+M}{\sigma(t)}), 
\end{align} 
where $\Psi(\lambda)$ is the tail probability of the standard Gaussian
\begin{equation}
    \Psi(\lambda):=\int_{\lambda}^{+\infty}\frac{1}{\sqrt{2\pi}}e^{-\frac{1}{2}u^2}du.
\end{equation}
%It has
The reader can check the following (loose) lower bound: $\Psi(\lambda)\geq e^{-2\lambda^2},$ for $\lambda>10$.
%(see e.g. \cite[Prop. 2.1.2]{vershynin2018high}). 
Therefore combining  \eqref{pf-lem-tmp-1} and \eqref{pf-lem-tmp-2} with this tail bound (note that our choice of $M$ ensures $\frac{V_F+M}{\sigma(t)}>10$) we have
\begin{align}
        \mathbb{P}(\tilde{X}_{\eps,t}\geq V_F) \geq e^{-2\frac{(V_F+M)^2}{\sigma^2(t)}}\geq e^{-\frac{4(V_F+M)^2}{at}}=  e^{-\frac{C}{t}}.
\end{align} Then the lemma is proved.

\end{proof}

\bibliographystyle{plain}
\bibliography{Biblio1}

\begin{thebibliography}{10}

\bibitem{BossyFT}
Mireille Bossy, Olivier Faugeras, and Denis Talay.
\newblock Clarification and complement to ``{M}ean-field description and
  propagation of chaos in networks of {H}odgkin-{H}uxley and
  {F}itz{H}ugh-{N}agumo neurons''.
\newblock {\em J. Math. Neurosci.}, 5:Art. 19, 23, 2015.

\bibitem{Brezis_FA}
Haim Brezis.
\newblock {\em Functional analysis, {S}obolev spaces and partial differential
  equations}.
\newblock Universitext. Springer, New York, 2011.

\bibitem{brunel}
N~Brunel.
\newblock Dynamics of sparsely connected networks of excitatory and inhibitory
  spiking networks.
\newblock {\em J. Comp. Neurosci.}, 8:183--208, 2000.

\bibitem{BrHa}
Nicolas Brunel and Vincent Hakim.
\newblock Fast global oscillations in networks of integrate-and-fire neurons
  with long firing rates.
\newblock {\em Neural Computation}, 11:1621--1671, 1999.

\bibitem{caceres2024sequence}
Mar{\'\i}a~J. C{\'a}ceres, Jos{\'e}~A Ca{\~n}izo, and Alejandro Ramos-Lora.
\newblock The sequence of pseudo-equilibria describes the long-time behaviour
  of the {NNLIF} model with large delay.
\newblock {\em arXiv preprint arXiv:2403.00971}, 2024.

\bibitem{CCP}
Mar\'{\i}a~J. C\'{a}ceres, Jos\'{e}~A. Carrillo, and Beno\^{i}t Perthame.
\newblock Analysis of nonlinear noisy integrate \& fire neuron models: blow-up
  and steady states.
\newblock {\em J. Math. Neurosci.}, 1:Art. 7, 33, 2011.

\bibitem{caceres2024}
Mar{\'\i}a~J. C{\'a}ceres, José~A. Cañizo, and Alejandro Ramos-Lora.
\newblock On the asymptotic behavior of the {NNLIF} neuron model for general
  connectivity strength, 2024.
\newblock arXiv 2401.13534.

\bibitem{CP2014}
Mar\'{\i}a~J. C\'{a}ceres and Beno\^{i}t Perthame.
\newblock Beyond blow-up in excitatory integrate and fire neuronal networks:
  refractory period and spontaneous activity.
\newblock {\em J. Theoret. Biol.}, 350:81--89, 2014.

\bibitem{CiCP-30-820}
Mar{\'\i}a~J. C{\'a}ceres and Alejandro Ramos-Lora.
\newblock An understanding of the physical solutions and the blow-up phenomenon
  for nonlinear noisy leaky integrate and fire neuronal models.
\newblock {\em Communications in Computational Physics}, 30(3):820--850, 2021.

\bibitem{CGGS}
J.~A. Carrillo, M.~d.~M. Gonz\'alez, M.~P. Gualdani, and M.~E. Schonbek.
\newblock Classical solutions for a nonlinear {F}okker-{P}lanck equation
  arising in computational neuroscience.
\newblock {\em Comm. in Partial Differential Equations}, 38(3):385--409, 2013.

\bibitem{carrillo2014qualitative}
Jos{\'e} Carrillo, Beno{\^\i}t Perthame, Delphine Salort, and Didier Smets.
\newblock Qualitative properties of solutions for the noisy integrate \& fire
  model in computational neuroscience.
\newblock {\em Nonlinearity}, 25:3365--3388, 2015.

\bibitem{carrillo2024classical}
José~Antonio Carrillo, Xu'an Dou, Pierre Roux, and Zhennan Zhou.
\newblock Classical solutions of a mean field system for pulse-coupled
  oscillators: long time asymptotics versus blowup, 2024.
\newblock arXiv 2404.13703.

\bibitem{delarue2015particle}
F.~Delarue, J.~Inglis, S.~Rubenthaler, and E.~Tanr\'{e}.
\newblock Particle systems with a singular mean-field self-excitation.
  {A}pplication to neuronal networks.
\newblock {\em Stochastic Process. Appl.}, 125(6):2451--2492, 2015.

\bibitem{delarue2015global}
Fran\c{c}ois Delarue, James Inglis, Sylvain Rubenthaler, and Etienne Tanr\'{e}.
\newblock Global solvability of a networked integrate-and-fire model of
  {M}c{K}ean-{V}lasov type.
\newblock {\em Ann. Appl. Probab.}, 25(4):2096--2133, 2015.

\bibitem{delarue2022global}
Fran\c{c}ois Delarue, Sergey Nadtochiy, and Mykhaylo Shkolnikov.
\newblock Global solutions to the supercooled {S}tefan problem with blow-ups:
  regularity and uniqueness.
\newblock {\em Probab. Math. Phys.}, 3(1):171--213, 2022.

\bibitem{DelarueIRT2015AAP}
Fran{\c{c}}ois Delarue, James Inglis, Sylvain Rubenthaler, and Etienne
  Tanr{\'e}.
\newblock {Global solvability of a networked integrate-and-fire model of
  McKean–Vlasov type}.
\newblock {\em The Annals of Applied Probability}, 25(4):2096 -- 2133, 2015.

\bibitem{dou2022dilating}
Xu'an Dou and Zhennan Zhou.
\newblock Dilating blow-up time: A generalized solution of the {NNLIF} neuron
  model and its global well-posedness, 2022.
\newblock arXiv 2206.06972.

\bibitem{du2024synchronization}
Ziyu Du, Yantong Xie, and Zhennan Zhou.
\newblock A synchronization-capturing multiscale solver to the noisy
  integrate-and-fire neuron networks.
\newblock {\em Multiscale Modeling \& Simulation}, 22(1):561--587, 2024.

\bibitem{Henry:13}
Gr\'{e}gory Dumont and Jacques Henry.
\newblock Synchronization of an excitatory integrate-and-fire neural network.
\newblock {\em Bull. Math. Biol.}, 75(4):629--648, 2013.

\bibitem{FL2016}
Nicolas Fournier and Eva L\"{o}cherbach.
\newblock On a toy model of interacting neurons.
\newblock {\em Ann. Inst. Henri Poincar\'{e} Probab. Stat.}, 52(4):1844--1876,
  2016.

\bibitem{hambly2019mckean}
Ben Hambly, Sean Ledger, and Andreas S{\o}jmark.
\newblock A {M}ckean--{V}lasov equation with positive feedback and blow-ups.
\newblock {\em The Annals of Applied Probability}, 29(4):2338--2373, 2019.

\bibitem{ikeda2022theoretical}
Kota Ikeda, Pierre Roux, Delphine Salort, and Didier Smets.
\newblock Theoretical study of the emergence of periodic solutions for the
  inhibitory {NNLIF} neuron model with synaptic delay.
\newblock {\em Mathematical Neuroscience and Applications}, 2, 2022.

\bibitem{jabin2023meanfield}
Pierre-Emmanuel Jabin and Datong Zhou.
\newblock The mean-field limit of sparse networks of integrate and fire
  neurons, 2023.
\newblock arXiv 2309.04046.

\bibitem{LS_2020}
Sean Ledger and Andreas S{\o}jmark.
\newblock Uniqueness for contagious {M}c{K}ean-{V}lasov systems in the weak
  feedback regime.
\newblock {\em Bull. Lond. Math. Soc.}, 52(3):448--463, 2020.

\bibitem{JGLiuZZ21}
Jian-Guo Liu, Ziheng Wang, Yantong Xie, Yuan Zhang, and Zhennan Zhou.
\newblock Investigating the integrate and fire model as the limit of a random
  discharge model: a stochastic analysis perspective.
\newblock {\em Math. Neurosci. Appl.}, 1:Art. No. 2, 36, 2021.

\bibitem{JGLiuZZ22}
Jian-Guo Liu, Ziheng Wang, Yuan Zhang, and Zhennan Zhou.
\newblock Rigorous justification of the {F}okker-{P}lanck equations of neural
  networks based on an iteration perspective.
\newblock {\em SIAM J. Math. Anal.}, 54(1):1270--1312, 2022.

\bibitem{2019-AAP1403}
Sergey Nadtochiy and Mykhaylo Shkolnikov.
\newblock {Particle systems with singular interaction through hitting times:
  Application in systemic risk modeling}.
\newblock {\em The Annals of Applied Probability}, 29(1):89 -- 129, 2019.

\bibitem{ReedSimon}
Michael Reed and Barry Simon.
\newblock {\em Methods of modern mathematical physics. {I}}.
\newblock Academic Press, Inc. [Harcourt Brace Jovanovich, Publishers], New
  York, second edition, 1980.
\newblock Functional analysis.

\bibitem{RouxSalort}
Pierre Roux and Delphine Salort.
\newblock Towards a further understanding of the dynamics in the excitatory
  {NNLIF} neuron model: blow-up and global existence.
\newblock {\em Kinet. Relat. Models}, 14(5):819--846, 2021.

\bibitem{sadun2022global}
Lorenzo Sadun and Thibaud Taillefumier.
\newblock Global solutions with infinitely many blowups in a mean-field neural
  network, 2022.
\newblock arXiv 2205.08035.

\bibitem{taillefumier2022characterization}
Thibaud Taillefumier and Phillip Whitman.
\newblock Characterization of blowups via time change in a mean-field neural
  network, 2022.
\newblock arXiv 2205.07155.

\bibitem{zhang2014distribution}
Jiwei Zhang, Katherine Newhall, Douglas Zhou, and Aaditya Rangan.
\newblock Distribution of correlated spiking events in a population-based
  approach for integrate-and-fire networks.
\newblock {\em Journal of computational neuroscience}, 36:279--295, 2014.

\end{thebibliography}

\end{document}